\def\url#1{\expandafter\s

\tring\csname #1\endcsname}
\def\mmat #1,#2,#3,#4,{\text{\small\arraycolsep=3pt $
\begin{pmatrix}#1&#2\\#3&#4\end{pmatrix}$}}
\newComments\NG{Nico}{red}
\newComments\QEh{QEh}{blue}
\def\mmat #1,#2,#3,#4,{\text{\small\arraycolsep=3pt $
\begin{pmatrix}#1&#2\\#3&#4\end{pmatrix}$}}
\def\mmat #1,#2,#3,#4,{\text{\small\arraycolsep=3pt $
\begin{pmatrix}#1&#2\\#3&#4\end{pmatrix}$}}
\renewcommand {\ssbegin}[2][*]
 {\refstepcounter{subsection}%
\if#1*
\addcontentsline{toc}{subsection}{\thesubsection.\hskip 1pc #2}%
\else
\addcontentsline{toc}{subsection}{\thesubsection.\hskip 1pc #2. #1}%
\fi
 \def \secno {\gdef \secno {}{\ssecfont
\thesubsection.\hskip 2ex}%
 }%
 \begin{#2}}
\renewcommand {\sssbegin}[2][*]
 {\refstepcounter{subsubsection}
\if#1*
\addcontentsline{toc}{subsubsection}{\thesubsubsection.\hskip 1pc #2}%
\else
\addcontentsline{toc}{subsubsection}{\thesubsubsection.\hskip 1pc #2. #1}
\fi
 \def \secno {\gdef \secno {}{\ssecfont \thesubsubsection.\hskip 2ex}%
 }%
 \begin{#2}}
\renewcommand {\parbegin}[2][*]
 {\refstepcounter{paragraph}
\if#1*
\addcontentsline{toc}{paragraph}{\theparagraph.\hskip 1pc #2}%
\else
\addcontentsline{toc}{paragraph}{\theparagraph.\hskip 1pc #2. #1}
\fi
 \def \secno {\gdef \secno {}{\ssecfont \theparagraph.\hskip 2ex}%
 }%
 \begin{#2}}
\newcommand{\tht}{\theta}
\newcommand {\A}{{\cal{A}}}
\newcommand{\D}{{\cal{D}}}
\newcommand{\btr}{\blacktriangleright}
\newcommand{\mo}{\mathcal{O}}
\tikzstyle{noneq}=[circle,fill=black,draw=black]
\newtheorem{theorem}{Theorem}[subsection]
\newtheorem{example}[theorem]{Example}
\newtheorem{proposition}[theorem]{Proposition}
\newtheorem*{proposition*}{Proposition}
\newtheorem{corollary}[theorem]{Corollary}
\newtheorem*{theorem*}{Theorem}
\newtheorem{lemma}[theorem]{Lemma}
\newtheorem{remark}[theorem]{Remark}
\newtheorem{definition}[theorem]{Definition}
\date{}
\titleformat*{\section}{\large\bfseries}
\titleformat*{\subsection}{\large\bfseries}
\titleformat*{\subsubsection}{\normalsize\bfseries}
\begin{document}

\title{\Large \textsc{Jacobson identities for post-Lie algebras \\ in positive characteristic}}

\author{\normalsize Quentin Ehret, Nicolas Gilliers {}\footnote{Division of Science and Mathematics, New York University Abu Dhabi, P.O. Box 129188, Abu Dhabi, United Arab Emirates. Email: \texttt{qe209@nyu.edu,nag9000@nyu.edu}. QE is supported by the Grant NYUAD-065.}}




\maketitle

\begin{abstract}
Let $p$ be a prime number. Given a restricted Lie algebra over a field of characteristic $p$ and a post-Lie operation over it, we prove the Jacobson identities for a $p$-structure built from the Lie bracket and the post-Lie operation, called sub-adjacent $p$-structure. Furthermore, we give sufficient conditions for the sub-adjacent Lie algebra to be restricted if equipped with this sub-adjacent $p$-structure. This construction is ``axiomatized'' by introducing the notion of restricted post-Lie algebras, and we work out several examples. 
\end{abstract}
\thispagestyle{empty}
\setcounter{tocdepth}{2}
\tableofcontents



\section{Introduction} \label{intro}

This paper is devoted to the study of post-Lie algebras over a field of positive characteristic. The post-Lie operad was introduced in characteristic zero by Vallette in 2007 as the Koszul dual of the operad of commutative triassociative algebras in the context of algebraic combinatorics, see \cite{Va}. Post-Lie algebras independently appeared in the study of numerical integrators using non-commutative Lie groups, see \cite{MKW}. Since then, this notion found applications in various contexts, like the study of affine connections with constant torsion (see \cite{MKL}), non-commutative probability theory (see \cite{G}) and renormalisation and regularity structures (see e.g \cite{BK}), to cite a few. Post-Lie algebras also naturally generalize pre-Lie algebras, as a pre-Lie algebra can be seen as a post-Lie structure over an abelian Lie algebra (see Definition \ref{def:postLie}). In characteristic zero, pre-Lie algebras (also called right-symmetric algebras) were introduced by Gerstenhaber in his work on Hochschild cohomology and deformations of algebras (see \cite{Ge}), but made their first appearance in the mathematics literature way before in the context of rooted trees algebras; see \cite{C}. Over the years, this type of algebras has been instrumental in many areas of mathematics; they share, for instance, profound relations with the theory of covariant derivatives on flat spaces. Trivial examples of pre-Lie algebras include associative algebras. For survey papers on this notion, see \cite{Bu,M}. In \cite{OG}, a procedure known as the Guin–Oudom process is described for constructing certain universal enveloping algebras of pre-Lie algebras. Notably, this process is used to build Hopf algebras related to the Butcher–Connes–Kreimer Hopf algebra (see \cite{CK}), which has applications in renormalization and stochastic analysis, see e.g. \cite{BK,F}. In the present paper, we adapt and use this procedure for post-Lie algebras in positive characteristic.

Over a field of positive characteristic $p>0$, most of the techniques related to Lie algebras are no longer valid and new ideas are required. In fact, an additional structure appears on many Lie algebras, known as ``$p$-map" or ``Frobenius map", which behaves roughly like the $p$-th power of associative algebras. Lie algebras equipped with such structure are called \textit{restricted} and were introduced by Jacobson in the 1940's in the study of the derivation algebra of associative algebras, see \cite{J}. Restricted Lie algebras are of prime importance due to their link to algebraic groups and their role in representation theory, see \cite{SF}. In particular, with $A$ an associative algebra over a field of characteristic $p>0$, Jacobson proved that $(a+b)^{p} - a^p - b^p$ is a sum of Lie commutators in $A$, for all $a,b\in A$. This is called the \emph{Jacobson identity} and is axiomatized in the definition of restricted Lie algebras, see Equation \eqref{def:restrictedLie}. The adaptation of usual methods to pre-Lie algebras in positive characteristic also necessitates the introduction of additional structure. Restricted pre-Lie algebras were first introduced by Dzhumadil'daev in \cite{asqar}. He showed analogs of Jacobson identities for the $p$-th power of the (non-associative) pre-Lie product and demonstrated that the sub-adjacent Lie algebra obtained by taking the commutator of the pre-Lie product is in fact restricted, see \cite[Theorem 1.1]{asqar}).  Later, Dokas extended this concept by generalizing the notion of restricted pre-Lie algebras to accommodate an abstract $p^{th}$ power (see \cite[Definition 2.3]{Do}). Notably, he showed that dendriform algebras possess two restricted structures; one in Dzhumadil’daev’s sense and another in the broader sense he introduced.

The main goal of the present paper is to introduce the notion of \textit{restricted} post-Lie algebra as an analogue of the notions of restricted Lie and restricted pre-Lie algebras, building upon Dzhumadil’daev's and Dokas' works.

\subsection{Contributions.}
This paper introduces the two notions of \emph{trivially restricted post-Lie algebras} and \emph{restricted post-Lie algebras}. We use the Guin-Oudom construction, introduced after \cite{asqar} was published, to perform our calculations, which makes them simpler. Using this construction, we recover the results of \cite{asqar} on pre-Lie algebras and extend them to post-Lie algebras. 

Unless otherwise specified, we denote by $k$ a field of characteristic $p\geq 2$. Through the paper, ``ordinary" is understood as ``not restricted".

To be more precise, let $(\fg, [-,-], (-)^{[p]})$ be a restricted Lie algebra equipped with a post-Lie product $\btr$ (see Section \ref{sec:postLie}) over a field $k$ of characteristic $p>0$. Recall that a map $f:V\rightarrow W$ between $k$-vector spaces is called \textit{$p$-linear} if it satisfies
\begin{equation}
    f(\lambda x+y)=\lambda^pf(x)+f(y),~\forall x,y\in V,\, \forall\lambda \in k.
\end{equation}
Define a $p$-linear map (we shall prove it further below) $\fg \ni x \mapsto x^{[p]_{\btr}}\in \fg$ by, for all $x\in \fg$:
\begin{equation}
\label{eqn:pstruct}
x^{[p]_{\btr}} := x^{[p]} +\sum_{n=1}^{p-1}\frac{1}{n}\sum_{\substack{\ell_1 + \cdots + \ell_{n} = p \\ \ell_i \geq 1}} \frac{(p-1)!}{(\ell_1-1)!\cdots (\ell_n-1)!}\prod_{1 \leq v \leq n-1} ( \ell_{1} + \cdots + \ell_v)^{-1}\,[x^{\bullet\,\ell_1} \cdots x^{\bullet\,\ell_{n}}]
\end{equation}
For any tuple $ \ell_1,\ldots,\ell_n \geq 1$ of $n$ positive integers summing to $p$, let
\begin{align}
\label{eqn:coeffc}
C_{\ell_1,\ldots,\ell_k}= \prod_{1 \leq v \leq k-1} ( \ell_{1} + \cdots + \ell_v) \quad \text{mod }p.
\end{align}
The bracket $[\cdots]$ and $\bullet$ are defined by:
\begin{align}
 [x_1,\ldots,x_n] &= [x_1,[x_2,\cdots [x_{n-1},x_n]]],\quad && \forall x_1,\ldots,x_n \in \fg, \\
 x^{\bullet\, n} &= (n{\rm \,times}) \,\, x \btr ( x \btr ( \cdots (x\btr x)\cdots)), \quad && \forall x \in \fg.
\end{align}
For example, when $p=2$, $C_{1} = 1$ (the product is empty) and 
\begin{equation}x^{[2]_{\btr}} = x^{[2]}+x\btr x,\quad \forall x \in \fg.\end{equation}
When $p=3$, we find the following formulae
\begin{align}
&C_{10} = 2 = -1,\quad C_{01} = 1, \\ 
& x^{[3]_{\btr}} = x^{[3]} + x\btr(x \btr x) + [x\btr x,x],\quad \forall x\in \fg.
\end{align}
The specific case of $p=2$ (resp. $p=3$) is investigated in Section \ref{sec:p=2} (resp. \ref{sec:p=3}).

We will see that the coefficients appearing in formula \eqref{eqn:pstruct} have a nice interpretation being the number of linearizations of certain binary trees. Observe that, already when $p=2$, many cancellations occur in the sum on the right-hand side of \eqref{eqn:pstruct}, which we shall discuss in Section \ref{sec:simplifications} 

The first result of this paper, see Theorem \ref{prop:jacobsonidentitties}, states that the map $x \mapsto x^{[p]_{\btr}}$ satisfies the Jacobson identities relatively to the sub-adjacent Lie bracket to the post-Lie product $\btr$. Our second result expands $x^{[p]_{\btr}}$ over a certain family of right-iterated brackets, see Theorem \ref{thm:expansion}

 \subsection{Restricted Lie algebras.}\label{sec:restrictedLie}Let $\fg$ be a finite-dimensional Lie algebra over $k$. Following Jacobson (\cite{J}), a set-map 
    $$(-)^{[p]}:\fg\rightarrow \fg, \quad x\mapsto x^{[p]},
    $$ is called a~\textit{$p$-map} on $\fg$ and $\fg$ is said to be {\it restricted}  if 
\begin{align}
\label{def:restrictedLie}
&(\lambda x)^{[p]}=\lambda^p x^{[p]} \text{ for all } x\in \fg \text{ and for all } \lambda \in k; \nonumber\\
&\ad_{x^{[p]}}=(\ad_x)^p \text{ for all } x\in \fg, \nonumber \\
&(x+y)^{[p]}=x^{[p]}+y^{[p]}+\displaystyle\sum_{1\leq i\leq p-1}s_i(x,y), \text{ for all } x,y\in \fg 
\end{align}
where the coefficients $s_i(x,y)$ are obtained from the expansion
\begin{equation*}
(\ad_{\lambda x+y})^{p-1}(x)=\displaystyle\sum_{1\leq i \leq p-1} is_i(x,y) \lambda^{i-1}.
\end{equation*}
Equation \eqref{def:restrictedLie} is referred to as the Jacobson identities, which can be made more explicit as follows:
\begin{align}\label{si}
\sum_{i=1}^{p-1}s_i(x,y)&=\sum_{\underset{x_{p-1}=y,~x_p=x}{x_k\in\{x,y\}}}\frac{1}{\sharp(x)}[x_1,[x_2,[...,[x_{p-1},x_p]...]]],\quad \forall x,y\in \fg,
\end{align}
where $\sharp(x):=\text{card}\{k,~x_k=x,~k=1,\cdots,p\}$.\\

The following Theorem is due to Jacobson (see \cite{J}) and is useful to study $p$-maps on Lie algebras.
\begin{theorem}\label{thm:jacobson}
Let $(L,[-,-])$ be a Lie algebra and let $(e_j)_{j\in J}$ be a~basis of $L$ such that there are $f_j\in L$ satisfying $(\ad_{e_j})^p=\ad_{f_j}$. Then, there exists exactly one $p$-map  $(\cdot)^{[p]}:L\rightarrow L$ such that $e_j^{[p]}=f_j \quad \text{ for all $j\in J$}.$
\end{theorem}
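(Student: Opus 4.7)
The plan is to split the claim into uniqueness, which is essentially formal, and existence, which I will obtain by working inside the universal enveloping algebra $U(L)$.

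For uniqueness, suppose $\eta_1,\eta_2:L\to L$ are two $p$-maps with $\eta_1(e_j)=\eta_2(e_j)=f_j$ for every $j\in J$. Setting $\delta:=\eta_1-\eta_2$, subtracting the sum identity \eqref{def:restrictedLie} applied to $\eta_1$ and to $\eta_2$ (the $s_i$ terms cancel) gives $\delta(x+y)=\delta(x)+\delta(y)$, while the scalar axiom yields $\delta(\lambda x)=\lambda^p\delta(x)$. Hence $\delta$ is $p$-semilinear, and since it vanishes on the basis $(e_j)$, it vanishes identically.

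For existence, I would work in $A:=U(L)$. Two classical identities for the Frobenius $x\mapsto x^p$ in any associative algebra of characteristic $p$ will drive the construction: the sum formula $(x+y)^p=x^p+y^p+\sum_{i=1}^{p-1}s_i(x,y)$ with $s_i$ precisely as in \eqref{si}, and $\ad_{x^p}=(\ad_x)^p$ as operators on $A$. The key observation is that each element $z_j:=e_j^p-f_j\in A$ satisfies $\ad_{z_j}|_L=(\ad_{e_j})^p-\ad_{f_j}=0$ by hypothesis; since $L$ generates $A$ as an associative algebra, this forces $z_j\in Z(A)$. For $x=\sum_j\lambda_j e_j\in L$ (the expression being unique because $(e_j)$ is a basis), I would define
\[
\eta(x):=x^p-c(x),\qquad c(x):=\sum_j \lambda_j^p\, z_j.
\]
Expanding $x^p$ via the sum formula yields $x^p=\sum_j\lambda_j^p e_j^p+R(x)$, where $R(x)$ is a sum of iterated Lie brackets of the $e_j$; therefore $\eta(x)=\sum_j\lambda_j^p f_j+R(x)\in L$ and $\eta(e_j)=f_j$, as required.

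The three axioms now drop out of the corresponding identities in $A$. The scalar axiom holds because both $x\mapsto x^p$ and $c$ are $p$-semilinear in $x$ — the latter because the $z_j$ are fixed central elements and $(\lambda+\mu)^p=\lambda^p+\mu^p$ in $k$. The sum formula for $\eta$ follows by subtracting $c(x+y)=c(x)+c(y)$ from the Frobenius sum formula for $(x+y)^p$ in $A$. Finally, since $c(x)\in Z(A)$, one has $\ad_{\eta(x)}=\ad_{x^p}=(\ad_x)^p$ on $A$, which restricts to the required identity on $L$. The only step that is not purely formal is the centrality of $z_j$; this rests on the fact that $L$ generates $U(L)$ together with the associative-algebra identity $\ad_{x^p}=(\ad_x)^p$, and is the main conceptual input of the argument.
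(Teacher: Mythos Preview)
Your argument is correct and is essentially the classical proof (as in Jacobson's original paper or, e.g., Strade--Farnsteiner, Theorem~2.3 in \cite{SF}). Note that the paper itself does not supply a proof of this theorem: it is stated as a known result and attributed to Jacobson \cite{J}, so there is no in-paper proof to compare against. Your use of $U(L)$, the centrality of $z_j=e_j^{\,p}-f_j$ (via $\ad_{z_j}|_L=0$ and the derivation property on generators), and the definition $\eta(x)=x^p-c(x)$ is exactly the standard route; the only implicit ingredient you rely on without naming it is the PBW embedding $L\hookrightarrow U(L)$, needed to identify the iterated-commutator terms $R(x)$ and the $s_i(x,y)$ computed in $U(L)$ with elements of $L$.
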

 
\subsubsection{Examples.}
\label{examplesun}
\begin{enumerate}
\item[($i$)] Let $\fg$ be an abelian Lie algebra. Then, any $p$-linear map $\fg\rightarrow\fg$ yields a restricted structure on $\fg$.
\item[($ii$)] Let $\A$ be an associative algebra over $k$. Then $(\A,[-,-],(-)^{[p]})$ is a restricted Lie algebra with the bracket  $[a,b]:=ab-ba$ and the $p$-map $a^{[p]}:=a^p,~\forall a,b\in \A$  (see \cite[Section 2.1]{SF}).
\item[($iii$)] Recall that given an associative algebra $\A$, the derivations space of $\A$, denoted by $\Der(\A)$, is defined by
\begin{equation*}
    \Der(\A):=\{d:\A\rightarrow\A,~d(ab)=d(a)b+ad(b),~\forall a,b\in \A\}.
\end{equation*}
It is well known (see \cite{J}) that $\Der(\A)$ is a restricted Lie algebra, with
\begin{align*}
    [f,g]&:=f\circ g-g\circ f;\\
    f^{[p]}&:=f^p={f\circ f\circ \cdot  \cdot \circ f} \,\,(p ~{\rm times}).
\end{align*}
\item [($iv$)]
(Partial restricted version of \cite[Theorem 3.1]{JZ})\label{prop:tensorderivation}
    Let $\A$ be an  associative algebra. Let $\D\subset\Der(\A)$ be a restricted subalgebra. Then, the space $\A\otimes\D$ is a restricted Lie algebra with, for all $a,b\in \A$ and for all $f,g\in\D$,
    \begin{align}\label{eq:tensorbracket}
        [a\otimes f,b\otimes g]&:=ab\otimes[f,g]; \\
        (a\otimes f)^{[p]}&:=a^p\otimes f^p. \nonumber
    \end{align}
    The proof is straightforward. Let $a,b\in \A$ and $f,g\in\Der(\A)$.
    \begin{align*}
        \bigl[(a\otimes f)^{[p]},b\otimes g\bigl]=[a^p\otimes f^p,b\otimes g]=a^pb\otimes[f^p,g]=a^pb\otimes\ad^p_f(g).
    \end{align*}
    One proves inductively that 
    \begin{equation*}
        \ad^p_{a\otimes f}(b\otimes g)=\ad^{p-k}_{a\otimes f}\bigl(a^kb\otimes \ad^k_f(g)\bigl),~\forall~ 0\leq k\leq p.
    \end{equation*} Therefore, $\ad^p_{a\otimes f}(b\otimes g)= \bigl[(a\otimes f)^{[p]},b\otimes g\bigl].$
    \end{enumerate}
\subsubsection{Restricted morphisms and modules.}
Let $\left( \fg,[-,-]_\fg,-)^{[p]_\fg}\right) $ and $\left( \fa,[-,-]_\fa,(-)^{[p]_\fa}\right) $ be two restricted Lie algebras. A Lie algebra morphism $\varphi:\fg\longrightarrow \fa$ is called \textit{restricted morphism} (or \textit{$p$-morphism}) if it satisfies 
\begin{equation*}
\varphi\bigl(x^{[p]_\fg} \bigl)=\varphi(x)^{[p]_\fa}\quad\forall x\in \fg.
\end{equation*}

A \textit{restricted representation} of a restricted Lie algebra $\left( \fg,[-,-]_\fg,(-)^{[p]_\fg}\right) $ (also called \textit{restricted $\fg$-module}) is a $k$-vector space $V$ equipped with a restricted morphism $\varphi:\fg\rightarrow\End(V)$, where the restricted Lie structure on $\End(V)$ is given by Example \ref{examplesun} (ii). We denote it by $(V,\varphi)$.

A linear map $d:\fg\rightarrow \fg$ is called a \textit{restricted derivation} of $\fg$  if in addition to the Leibniz condition
 \begin{equation*}
    d\bigl([x,y]_\fg\bigl)=[d(x),y]_\fg+[x,d(y)]_\fg,~\forall x,y\in \fg,
\end{equation*} it also satisfies
\begin{equation*}
    d\bigl(x^{[p]_\fg}\bigl)=\ad_x^{p-1}(d(x)),~\forall x\in \fg.
\end{equation*}
\begin{example} With the notations of Example \ref{examplesun} ($iv$), the linear map
    \begin{equation}\label{eq:JDJderivation}
d_{a,f}:\A\otimes\D\rightarrow\A\otimes\D,\quad d_{a,f}(b\otimes g):=af(b)\otimes g,\,\,a,b \in \mathcal{A},\,f,g\in \mathcal{D}
    \end{equation} is a restricted derivation \emph{when} $\mathcal{A}$ \emph{is commutative}.
The proof is straightforward. Let $a,b \in \mathcal{A}$,
        \begin{align*}
            d_{a,f}\bigl((b\otimes g)^{[p]}\bigl)=d_{a,f}(b^p\otimes g^{[p]})=af(b^p)\otimes g^{p}= a {\rm ad}_b^{p-1}f(b)\otimes g^p.
        \end{align*} 
        (see \cite{J} page 21)
        We also have
        \begin{align*}
            \ad^{p-1}_{b\otimes g}\circ d_{a,f}(b\otimes g)=\ad^{p-1}_{b\otimes g}\bigl(af(b)\otimes g\bigl)=b^{p-1}af(b)\otimes {\rm ad}_g^{p-1}(g).
        \end{align*}
    Then, it has been shown in \cite[Theorem 3.1]{JZ} that the map $d_{a,f}$ is an ordinary derivation, thus the conclusion.
\end{example}
    
\begin{remark} In \cite[Theorem 3.1]{JZ}, it has been shown that in characteristic 0, there exists a post-Lie product on the Lie algebra $\A\otimes\D$ with the bracket \eqref{eq:tensorbracket}. Namely, the post-Lie product is given by
\begin{equation}
    (a\otimes f)\btr(b\otimes g):=d_{a,f}(b\otimes g)=af(b)\otimes g,~\forall a,b\in \A,~\forall f,g\in \D.
\end{equation}
In the sequel, we show that it also carries a trivially restricted post-Lie structure for $p = 2,3$, see Propositions \ref{prop:JZp=2} and \ref{prop:JZp=3}.
\end{remark} 
Let $\fg$ be a restricted Lie algebra. A $\fg$-module $(M,\btr)$ is called \textit{restricted} if
\begin{equation}
(p \text{ terms})\,x\btr(x\btr(\cdots(x\btr m)\cdots) =x^{[p]}\btr m, \quad  \forall x\in \fg,~\forall  m\in M.
\end{equation}
Let $\left( \fg,[-,-]_\fg,(-)^{[p]_\fg}\right) $ and $\left( \fa,[-,-]_\fa,(-)^{[p]_\fa}\right) $ be two restricted Lie algebras. 
\emph{A restricted left Lie module} is a restricted Lie algebra in the category of restricted $\fa$-modules, that is, a triple ($\fg$, $\btr$, $\fa$) where $\btr \colon \fa \otimes_{k} \fg \to \fg$ is a linear map such that, for all $x,y\in \fg$ and for all $a,b\in\fa$,
\begin{align*}
    &a\btr[x,y]=[a\btr x,y]+[x,a\btr y];  && [a,b]\btr x=a\btr(b\btr x)-b\btr(a\btr x);\\
    &a\btr x^{[p]_\fg}=\ad_x^{p-1}(a \btr x);  &&a^{[p]_\fa}\btr x=\underset{p \text{ terms }}{\underbrace{a\btr(a\btr(\cdots(a}}\btr x)).
\end{align*} 
We denote such a restricted left Lie module by a triple $(\fg,\btr,\fa)$.

Let $\left( \fg,[-,-]_\fg,(-)^{[p]_\fg}\right) $ be a restricted Lie algebra. We denote by $\fu(\fg)$ the universal enveloping algebra of the (ordinary) Lie algebra $\fg$ and by $I$ the ideal of $\fu(\fg)$ generated by elements $x^p-x^{[p]_\fg},~x\in\fg.$ The \emph{restricted} enveloping algebra $\fu_p(\fg)$ of $\fg$ is defined by $\fu_p(\fg)=\fu(\fg)/I$, see \cite{J}.

\subsection{Conjecture.} We shall, at this point, early in our exposition, already formulate a conjecture for better reference. As the author have observed in \cite{Do} (see references therein),  a (un-graded) brace algebra $(B, -\{-\},\star)$ over a field $k$ with finite characteristic is a restricted pre-Lie algebra (or, in our terminology, is a trivially restricted pre-Lie algebra). The $p$-structure involved is the $p$ iteration of the pre-Lie product, that is, 
$$
x^{[p]} = x\{x\}\{x\}\cdots \{x\}\,({ p\,\rm times}),\quad \forall x\in B.
$$

We make the following conjecture.
 Assume the following: 
 \begin{enumerate}
 \item $B$ possesses an element $e \in B$ such that $e\{p\} = p$; 
 \item The Lie algebra $(B,[-,-]_{\star})$ is restricted (and we denote by $(-)^{[p]}$ the $p$-structure).
 \end{enumerate}
 Then, the product $x \btr y : = x\{e\star y\}$, $\forall x,y \in B,$ yields a post-Lie operation on the Lie algebra $(B,[-,-]_{\star})$.
 Furthermore, if we define  
\begin{align}
x^{[p]_{\btr}} &:= x\{e\star x\}\cdots \{e \star x\}\\ &+\sum_{k=2}^{p-1}\frac{(-1)^{k-1}}{k}\sum_{\substack{\ell_1 + \cdots + \ell_{k} = p \\ \ell_i \geq 1}} \frac{(p-1)!}{(\ell_1-1)!\cdots (\ell_k-1)!}\frac{1}{C_{\ell_1,\ldots,\ell_k}}\,[x^{\bullet\ell_1} \cdots x^{\bullet \ell_{k}}],
\end{align}
where $x^{\bullet\, \ell} = x\{e\star x^{\bullet\,\ell-1}\}$,
then the adjacent Lie algebra $\bigl(B, [x,y]=x\{e\star y\} - y\{e\star x\}$, $x^{[p]_{\btr}}\bigl)$  is restricted.


\section{Jacobson identities for Post-Lie algebras}
 For a restricted Lie algebra $\fa$, we denote by $\pi_{\fa}\colon \mathfrak{u}(\fa)\to\mathfrak{u}_p(\fa)$ the canonical projection onto the restricted enveloping algebra.
 
\subsection{Post-Lie algebras.}\label{sec:postLie} We introduce the basic definitions and terminology about post-Lie algebras and the extension taken from \cite{EFLM} of the Guin-Oudom construction of the envelope of the sub-adjacent Lie bracket to a post-Lie product, see \cite{OG}.
\begin{definition}(see \cite{Va}, \cite{MKL})\label{def:postLie}
A triple $(\fg, [-,-],\btr)$ is called a Post-Lie algebra if 
\begin{enumerate}
\item $(\fg, [-,-])$ is a Lie algebra,
\item for any $x,y,z \in \fg$, we have
$$
x \btr [y,z] = [x \btr y, z] + [y, x \btr z]; \quad \text{and} \quad  [x,y]\btr z = a_{\btr}(x,y,z) -  a_{\btr}(y,x,z),
$$
\end{enumerate}
where $a_{\btr}$ is the associator of $\btr$ defined by $a_{\btr}(x,y,z) = x\btr (y \btr z) - (x\btr y) \btr z,~\forall x,y,z\in \fg$.
\end{definition}
To any post-Lie algebra $(\fg, [-,-],\btr)$ corresponds another binary operation denoted by $\llbracket-,-\rrbracket$ on $\fg$ and defined by 
\begin{equation}\label{eq:subadjacent-bracket}
    \llbracket x,y\rrbracket:=[x,y]+x\btr y-y\btr x,\quad \forall x,y\in\fg.
\end{equation}
This operation is a Lie bracket on $\fg$ called \textit{sub-adjacent Lie bracket}. The Lie algebra $(\fg,\llbracket-,-\rrbracket)$ is called the \textit{sub-adjacent Lie algebra}. Moreover, the map $x\mapsto x\btr(-)$ yields a representation of the sub-adjacent Lie algebra.\\

Let $(\fg, [-,-],\btr)$ be a post-Lie algebra. On $\mathfrak{u}(\fg)$, define a bilinear operation which we still denote with the symbol $\btr$ by the following formulas:
\begin{align}
\label{eqn:extension}
xE \btr = x \btr(E\btr F) - (x \btr E)\btr F,\quad E \btr F_1 F_2 = (E_{(1)} \btr F_{1})(E_{(2)} \btr F_{2}),
\end{align}
with $x \in \fg, E,F,F_1,F_2 \in \mathfrak{u}(\fg)$. Let $\star_{\btr}$ be the bilinear operation on $\mathfrak{u}(\fg)$ defined by 
\begin{equation}
E \star_{\btr} F = E_{(1)} (E_{(2)}\btr F),\quad \forall E,F \in \mathfrak{u}(\fg).
\end{equation}
The product $\star_{\btr}$ is associative and moreover, the map 
\begin{equation}
\mathfrak{u}(\fg, \llbracket -,-\rrbracket) \to \bigl(\mathfrak{u}(\fg), \star_{\btr}, 1, \Delta_{\mathfrak{u}(\fg)}, \varepsilon_{\mathfrak{u}(\fg)}\bigl),\quad x_2\cdots x_n \mapsto x_2 \star_{\btr} \cdots \star_{\btr} x_n
\end{equation}
is an isomorphism of Hopf algebras, see \cite{OG,EFLM}.
\subsubsection{(Ordinary) $\mo$-operators.}
Let $(\fg, \btr, \fa)$ be a  Lie module. Following \cite{K}, an (ordinary) $\mo$-operator on  $(\fg, \btr, \fa)$ is a linear map $\tht \colon \fg \rightarrow \fa$ satisfying
\begin{equation}
\label{eqn:ooperator}
[\theta(x), \theta(y)] = \theta\bigl([x,y] + \tht(x) \btr y - \tht(y) \btr x\bigl), \quad \forall x,y \in \fg.
\end{equation}
Let $\theta$ be an (ordinary) $\mathcal{O}$-operator. We define a Lie bracket called \textit{adjacent bracket} by
\begin{equation}
\llbracket x,y \rrbracket_{\theta} : = [x,y]_{\mathfrak{g}} + \theta(x) \btr y - \theta(y) \btr x, \quad \forall x,y\in \fg.
\end{equation}
Equation \eqref{eqn:ooperator} implies that $\llbracket -,- \rrbracket_{\theta}$ is the sub-adjacent Lie bracket to the post-Lie product defined by
\begin{equation}\label{eq:postlie-from-oop}
x \btr_{\theta} y = \theta(x) \btr y, \quad \forall x,y \in \mathfrak{g}.
\end{equation}
 Let $(\fg,[-,-])$ be a Lie algebra. A \emph{Rota-Baxter operator} on $\fg$ is a linear operator $R\colon \fg \to \fg$ satisfying
\begin{equation*}
[R(x),R(y)] = R\bigl([R(x),y] + [x,R(y)] + [x,y]\bigl), \quad \forall x,y \in \fg.
\end{equation*}
A Rota-Baxter operator is an $\mathcal{O}$-operator on the Lie module $(\fg,{\rm -ad},\fg)$.
Let $\theta\colon \mathfrak{g} \to \mathfrak{a}$ be an $\mathcal{O}$-operator over a Lie module $(\fg,\btr,\fa)$. The Guin-Oudom recursion (see \cite{OG,G}) yields a linear operator 
$\bm{\theta} \colon \mathfrak{u}(\fg) \to \fu(\mathfrak{a})
$ satisfying for any $x \in \mathfrak{g}$ and any $ E \in \mathfrak{u}(\fg):$
\begin{align*}
\bm{\theta}(x) &= \theta(x); \\
\bm{\theta}(xE) &= \theta(x)\bm{\theta}(E) - \bm{\theta}(\theta(x)\btr E).
\end{align*}
With these definitions, the bilinear operation $\btr_{\theta}$ defined in Equation \eqref{eqn:extension} extending the post-Lie product $\btr_{\theta}$ is expressed as
$ \btr_{\theta}\, = {\theta}(-)\btr (-)$ (where in the right-hand side of the previous equation $\btr$ is the integration of the action $\btr$ of $\fg$ on $\fa$ to an action of the bi-algebra $\mathfrak{u}(\fg)$ on $\mathfrak{u}(\fa)$, see \cite{G}).
\begin{example}
\label{sec:braces}
Let $(\mathcal{P},\gamma,I,m)$ be a non-symmetric connected operad with multiplication $m \in \mathcal{P}(2)$. Define 
$$
\mathbb{C}[\mathcal{P}]_0 : = \bigoplus_{n\geq 1} \mathcal{P}(n),\quad \mathbb{C}[\mathcal{P}] : = \mathbb{C}\eta \oplus \bigoplus_{n\geq 1} \mathcal{P}(n),
$$ where $\eta$ is the unit for the associative product on $\mathbb{C}[\mathcal{P}]$ inducted by $m$.
Denote by $\vartriangleright$ the Gerstenhaber-Voronov pre-Lie product (see \cite[Example 3.5.1]{G}) and define a map 
$$
\rho \colon \mathbb{C}[\mathcal{P}] \rightarrow \mathbb{C}[\mathcal{P}]_0,\quad p \mapsto \gamma(m;I,p), \, 1\mapsto I.
$$
Then in \cite[Proposition 3.5.1]{G}, the second author proved that $\rho$ is an $\mathcal{O}$-operator when $\mathbb{C}[\mathcal{P}]_0$ is equipped with the sub-adjacent Lie bracket to the Gertenhaber-Voronov pre-Lie product and $\mathbb{C}[\mathcal{P}]$ is equipped with the commutator Lie bracket of $m$.
\end{example}
\subsection{Jacobson identities for post-Lie algebras.}
\label{sec:jacobson}
 We are ready to state our main Theorem.
\begin{theorem} 
\label{thm:main} 
Let $(\fg, \btr, [-,-])$ be a post-Lie algebra over a field $k$ of characteristic $p>0$. With the notations introduced so far, for any $x \in \fg$, we have
\begin{equation}
\label{eqn:mainstatement}
L(x) := x^{\star_{\btr}p} - x^p  - x^{\bullet_{\btr} p} \in [\fg,\fg],
\end{equation}
where $x^{\bullet_{\btr} p} = x\btr x^{\bullet p-1}$, $x\btr x^{\bullet 0}=x$. Moreover, the following formula holds 
\begin{align}
L(x) = \sum_{\substack{\ell_1 + \cdots + \ell_{n} = p, \\ \ell_i \geq 1,\, n\geq 2,\,\exists \ell_j \geq 2.}} \frac{1}{n}\frac{(p-1)!}{(\ell_1-1)!\cdots (\ell_n-1)!}\frac{1}{C_{\ell_1,\ldots,\ell_n}}[x^{\bullet \,\ell_1} \cdots x^{\bullet \,\ell_{n}}].
\end{align}
\end{theorem}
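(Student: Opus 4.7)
The plan is to expand $x^{\star_{\btr} p}$ explicitly inside $\mathfrak{u}(\fg)$ as a sum of monomials $x^{\bullet \ell_1}\cdots x^{\bullet \ell_n}$ indexed by compositions of $p$, peel off the two extremal contributions $x^p$ (from the composition $(1,\ldots,1)$) and $x^{\bullet_{\btr} p}$ (from $(p)$), and recognize the remainder $L(x)$ as the asserted sum of right-nested brackets by means of the Dynkin--Specht--Wever theorem.

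The expansion rests on two properties that follow immediately from $\Delta(x) = x\otimes 1+1\otimes x$: first, $x\star_{\btr} E = xE + x\btr E$ for all $E\in \mathfrak{u}(\fg)$; second, $x\btr(-)$ is a derivation of the concatenation product on $\mathfrak{u}(\fg)$. Writing $u_k = x^{\star_{\btr} k}$, the recursion $u_k = x\cdot u_{k-1} + x\btr u_{k-1}$ combined with the derivation property shows that each step either prepends a new factor $x^{\bullet 1}$ to an existing monomial or increments one factor $x^{\bullet \ell_i}$ to $x^{\bullet(\ell_i+1)}$. An induction on $k$ then establishes
\begin{equation*}
u_k = \sum_{\substack{\ell_1+\cdots+\ell_n = k \\ \ell_i \geq 1}} \frac{(k-1)!}{(\ell_1-1)!\cdots(\ell_n-1)!\, C_{\ell_1,\ldots,\ell_n}}\, x^{\bullet \ell_1}\cdots x^{\bullet \ell_n},
\end{equation*}
the inductive step reducing to a direct algebraic check that the claimed coefficient $c(\ell_1,\ldots,\ell_n)$ satisfies the recurrence
\begin{equation*}
c(\ell_1,\ldots,\ell_n) = \delta_{\ell_1,1}\, c(\ell_2,\ldots,\ell_n) + \sum_{\substack{1\leq i\leq n \\ \ell_i \geq 2}} c(\ell_1,\ldots,\ell_i-1,\ldots,\ell_n).
\end{equation*}
Specializing to $k = p$ and subtracting the (coefficient-one) contributions of $(1,\ldots,1)$ and $(p)$ leaves exactly the index set $\{(\ell_1,\ldots,\ell_n)\,:\, n\geq 2,\, \exists j,\, \ell_j\geq 2\}$ appearing in the statement.

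To convert this residual sum of monomials into the claimed sum of right-nested brackets, I would apply Dynkin--Specht--Wever length by length: the map $a_1\cdots a_n\mapsto \tfrac{1}{n}[a_1,\ldots,a_n]$ recovers any Lie element from its monomial expansion, with the factor $1/n$ legitimate because $2\leq n\leq p-1$ makes $n$ invertible modulo $p$. The applicability of DSW requires that, after lifting to the free associative algebra on symbols $Y_\ell$ substituting for $x^{\bullet \ell}$, each length-$n$ slice of the lift of $L(x)$ be a Lie element. This follows from the primitivity of $L(x)$ in $(\mathfrak{u}(\fg),\cdot,\Delta)$: $x^{\star_{\btr} p}$ is primitive because it is the image under the Guin--Oudom Hopf isomorphism of the primitive element $x^p\in \mathfrak{u}(\fg,\llbracket-,-\rrbracket)$, the latter being primitive via Freshman's dream $\Delta(x)^p = (x\otimes 1 + 1\otimes x)^p = x^p\otimes 1 + 1\otimes x^p$ in characteristic $p$; the same Freshman's dream yields primitivity of $x^p$ in $(\mathfrak{u}(\fg),\cdot,\Delta)$; and $x^{\bullet_{\btr} p}\in\fg$ is tautologically primitive. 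The main obstacles are the combinatorial verification of the closed-form coefficient featuring $C_{\ell_1,\ldots,\ell_n}$ and the careful lift to a free setting needed to apply Dynkin--Specht--Wever term-by-term rather than globally, after which the membership $L(x)\in [\fg,\fg]$ is a free byproduct of the bracket formula.
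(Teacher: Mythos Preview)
Your proposal is correct and follows the same overall strategy as the paper: both arguments (i) establish primitivity of $x^{\star_{\btr} p}$ (you via the Guin--Oudom Hopf isomorphism and the Freshman's dream, the paper via vanishing of the reduced coproduct), (ii) pass to the free post-Lie algebra on one generator to make the ``lift'' rigorous, and (iii) apply Dynkin--Specht--Wever length by length (explicit in your write-up, implicit in the paper's factor $1/n$). The one genuine difference is in the derivation of the closed-form coefficients: the paper proves a more general identity (Proposition~\ref{prop:formulathetaproduct}) expressing $\tau_1\star_{\btr}\cdots\star_{\btr}\tau_n$ as a sum over linearized ordered forests of corollas and then specializes, whereas you obtain the coefficients directly from the recursion $u_k=xu_{k-1}+x\btr u_{k-1}$, which is shorter for the purpose at hand but does not yield the general forest formula. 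Your identification of the two obstacles (the closed-form verification and the free lift needed for DSW) is accurate; once those are filled in, the argument is complete.
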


\begin{proof}
 See Section \ref{sec:proofs}.
\end{proof}

Denote by $Z_{\bullet}(\fg)$ the lower central series of $\fg$. For Rota-Baxter operators, the above Theorem can be made more precise.
\begin{corollary} 
\label{thm:mainrb}
Let $\theta\colon \fg \to \fa$ be a Rota--Baxter operator on $\fg$. With the notation introduced so far, for any $x \in \fg$, we have
\begin{equation}
L(x) \in Z_{p}(\fg).
\end{equation}
where we denote by $x^{\bullet_{\theta} p} = \theta(x)\btr x^{\bullet_{\theta} p-1} $.
\end{corollary}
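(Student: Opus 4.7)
The plan is to apply Theorem \ref{thm:main} to the post-Lie product $\btr_\theta$ induced by the Rota--Baxter operator $\theta$ via \eqref{eq:postlie-from-oop}, and then to sharpen the conclusion ``$L(x)\in[\fg,\fg]$'' into ``$L(x)\in Z_p(\fg)$'' by tracking the lower-central-series depth of each summand in the explicit expansion provided by the second half of that theorem.

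The key observation will be that, for a Rota--Baxter operator, the associated post-Lie product is itself a Lie bracket up to sign: the action $\btr$ on the Lie module $(\fg,-\ad,\fg)$ is $-\ad$, so
\[
x \btr_{\theta} y = -[\theta(x), y], \qquad \forall\, x,y \in \fg.
\]
A straightforward induction on $\ell$ will then give
\[
x^{\bullet_{\theta}\,\ell} = (-1)^{\ell-1}\, \ad_{\theta(x)}^{\ell-1}(x),
\]
which is a right-iterated Lie bracket of $\ell$ elements of $\fg$, and hence lies in $Z_{\ell}(\fg)$.

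With this in hand, the conclusion will follow by iterating the standard inclusion $[Z_{a}(\fg), Z_{b}(\fg)] \subseteq Z_{a+b}(\fg)$ along the right-nested outer bracket: for any composition $\ell_1+\cdots+\ell_n = p$ appearing in the sum of Theorem \ref{thm:main},
\[
[x^{\bullet_{\theta}\,\ell_1}, x^{\bullet_{\theta}\,\ell_2}, \ldots, x^{\bullet_{\theta}\,\ell_n}] \in Z_{\ell_1+\cdots+\ell_n}(\fg) = Z_p(\fg).
\]
Summing over all indexing tuples allowed by Theorem \ref{thm:main} then yields $L(x)\in Z_p(\fg)$.

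I do not expect a substantive obstacle: the content of the corollary is really the observation that, specifically in the Rota--Baxter case, each bullet-iterate $x^{\bullet_\theta \ell}$ already sits as deep as possible in the lower central series (namely in $Z_\ell(\fg)$), so that the combinatorial identity of Theorem \ref{thm:main} automatically upgrades from the generic bound in $[\fg,\fg]$ to the sharper bound in $Z_p(\fg)$; the remainder is a purely formal depth count.
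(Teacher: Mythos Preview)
Your proposal is correct and matches the paper's intent: the paper states the corollary without a separate proof, leaving it as an immediate consequence of the explicit formula in Theorem \ref{thm:main}, and your argument spells out exactly the depth count the reader is meant to supply. The only point worth noting is that the paper's Rota--Baxter setup has $\fa=\fg$ with action $-\ad$, which you identified correctly; from there your observations $x^{\bullet_\theta \ell}\in Z_\ell(\fg)$ and $[Z_a(\fg),Z_b(\fg)]\subseteq Z_{a+b}(\fg)$ finish the job.
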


\subsubsection{Pre-Lie (left-symmetric) Jacobson identities,  see \cite{asqar}.}\label{sec:preliejacobson}
From Theorem \ref{thm:main}, in the case where $[\fg,\fg]=0$ ($\fg$ is the abelian Lie algebra), we have
\begin{equation}
x^{\star_{p}} = x^{p} + x^{\bullet_{\btr} p},\quad\forall x\in\fg.
\end{equation}
Note that in that case, for any $x,y \in \mathfrak{g}$, we have
\begin{equation}
(x+y)^{\bullet_{\btr} p} = (x+y)^{\star_{p}} - (x+y)^{p} = (x+y)^{\star_{p}} - x^{p} - y^{p},
\end{equation}
and the Jacobson identities for $\bullet_{\btr}$ proved in \cite{asqar} (First statement in Theorem 1.1) follow from the associative Jacobson formula for the product $\star_{\btr}$ and for the product on $\mathfrak{u}(\fg)$. We can as well retrieve the last statement of Theorem 1.1 of \cite{asqar}: the map
$$
y \mapsto (x^{\bullet_{\btr} p} - x^{\star_{\btr}p}) \btr y
$$
is a ${\btr}$-derivation. In fact, for any $y,z\in \fg$, we have
\begin{align}
(x^p\btr y) \btr z + y \btr( x^p \btr z) &= x^p\btr y + y\star_{\btr}x^p)\btr z \nonumber \\
&=(x^p \btr y + y\btr x^p + yx^p)\btr z. \label{ref:lastoneA}
\end{align}
Since 
$y\btr x^p = p (y\btr x) x^{p-1} = 0$ and $x^py=yx^p$, we get that $\eqref{ref:lastoneA}$ is equal to 
$$
\eqref{ref:lastoneA}= (x^p \btr y + x^py) \btr z = (x^p \star_{\btr} y) \btr z,
$$
where we have used that $x^p$ is a primitive element in $\mathfrak{u}(\fg)$ (meaning that $\Delta(x^{p})=1\otimes x^{p} + x^{p}\otimes 1$). 

\subsubsection{Post-Lie Jacobson identities.}
\label{sec:postliejacobson}
We generalize the results of the previous section to post-Lie algebras. Let $(\fg,[-,-],\btr)$ be a post-Lie algebra and let $(-)^{[p]}$ be a $p$-map on $(\fg,[-,-])$. Let $(-)^{[p]_{\btr}}$ be the map defined in Equation $\eqref{eqn:pstruct}$ and define for all $x\in \fg$ a map
$$
D(x)\colon \fg\to\fg,\quad D(x)(y): = (x^{\star_{\btr}p}-x^{[p]_{\btr}})\btr y.
$$
\begin{theorem}
\label{prop:jacobsonidentitties}
Assume that the map $x\mapsto y \btr x$ is a restricted derivation of $(\fg, [-,-], (-)^{[p]})$ for any $y\in \fg$. Then, 
\begin{enumerate}
\item for any $x,y\in \fg$, $D(x)$ is a $\btr$-derivation and $[D(x), D(y)] = 0$; 
\item the map $x\mapsto x^{[p]_{\btr}}$ defined by Equation \eqref{eqn:pstruct} satisfies the following Jacobson identities :
\begin{equation}
(x+y)^{[p]_{\btr}} = x^{[p]_{\btr}} + y^{[p]_{\btr}} + \sum_{\substack{(x_k)_k \in \{x,y\}^{p}\\ x_{p-1}=y, x_p =x}} \frac{1}{\sharp(x)} \llbracket x_2,\llbracket x_2, \llbracket \ldots,\llbracket x_{p-1},x_p \rrbracket \cdots \rrbracket\rrbracket \rrbracket,
\end{equation}
where $\sharp(x):=\text{card}\{k,~x_k=x,~k=1,\cdots,p\}$.
\end{enumerate}
\end{theorem}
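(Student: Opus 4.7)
The plan is to first establish a clean reduction: comparing the definition \eqref{eqn:pstruct} with the explicit expansion of $L(x)$ in Theorem~\ref{thm:main}, the $n=1$ term in \eqref{eqn:pstruct} contributes precisely $x^{\bullet p}$, and the remaining $n=2,\dots,p-1$ terms match the sum in $L(x)$ (whose effective range is also $n=2,\dots,p-1$, since the constraint $\exists\,\ell_j \ge 2$ rules out only the all-ones case $n=p$). Hence
\begin{equation*}
x^{\star_\btr p} - x^{[p]_\btr} \;=\; x^p - x^{[p]} \;=:\; A,
\end{equation*}
so $D(x)(y) = A \btr y$, and the entire theorem reduces to analyzing the action of $A \in \mathfrak{u}(\fg)$ on $\fg$ through the extended $\btr$.

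For Part~(1), the key algebraic input is the characteristic-$p$ identity $\sum_{i=0}^{p-1} x^i\, w\, x^{p-1-i} = \ad_x^{p-1}(w)$ in $\mathfrak{u}(\fg)$, which follows from $\binom{p-1}{i}\equiv(-1)^i \pmod p$. Specializing $w=[x,y]$ and using that $\ad_y$ is a derivation of $\mathfrak{u}(\fg)$ gives $[x^p,y] = \ad_x^p(y) = \ad_{x^{[p]}}(y) = [x^{[p]},y]$, so $A$ is central in $\mathfrak{u}(\fg)$; specializing $w = y\btr x$ and using that $y\btr(-)$ is a derivation of $\mathfrak{u}(\fg)$ (since $y$ is primitive) gives $y\btr x^p = \ad_x^{p-1}(y\btr x)$, which by the restricted-derivation hypothesis equals $y \btr x^{[p]}$, so $y\btr A = 0$ for every $y \in \fg$. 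Since $A$ is also primitive in $\mathfrak{u}(\fg)$ in characteristic $p$, the algebra morphism $(\mathfrak{u}(\fg),\star_\btr)\to \End(\fg)$, $E\mapsto (E\btr -)$, together with $A\star_\btr y = Ay + A\btr y$ and $y\star_\btr A = yA$, yields
\begin{equation*}
A\btr(y\btr z) \,=\, (A\star_\btr y)\btr z \,=\, (yA)\btr z + (A\btr y)\btr z \,=\, y\btr(A\btr z) + D(x)(y)\btr z,
\end{equation*}
which is the $\btr$-derivation property for $D(x)$. To prove $[D(x),D(y)]=0$, I would show by induction on the PBW-degree of $E\in\mathfrak{u}(\fg)_+$ that $E\btr B = 0$, where $B := y^p - y^{[p]}$: the base case $z\btr B = 0$ for $z \in \fg$ is symmetric to $y\btr A=0$, and the inductive step uses the extension formula $(zE')\btr B = z\btr(E'\btr B) - (z\btr E')\btr B$. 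Specializing to $E=A$ gives $A\btr B = B\btr A = 0$, and combined with $[A,B]=0$ the morphism produces $[D(x),D(y)] \equiv 0$.

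For Part~(2), I would combine three associative Jacobson identities: (i) for the product $\star_\btr$ on $\mathfrak{u}(\fg)$, whose commutator restricted to $\fg$ is $a\star_\btr b - b\star_\btr a = [a,b] + a\btr b - b\btr a = \llbracket a,b\rrbracket$, so that the associated $s_i$'s are exactly the iterated $\llbracket-,-\rrbracket$-brackets in the statement; (ii) for the standard product on $\mathfrak{u}(\fg)$; and (iii) the Jacobson identity for $(\fg, [-,-], (-)^{[p]})$. Subtracting (iii) from (ii) shows $u\mapsto u^p-u^{[p]}$ is additive, and substituting the reduction from the first paragraph at $u = x,\,y,\,x+y$ into (i) makes the $A$-terms cancel on both sides, leaving precisely the claimed identity. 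The main obstacle is the derivation-property computation in Part~(1): it hinges on simultaneously exploiting primitivity of $A$ (making $A\btr$ a derivation of $\mathfrak{u}(\fg)$), centrality of $A$ (from the Frobenius-type identity), and the vanishing $y\btr A = 0$ (from the restricted-derivation hypothesis combined with the same identity); once these three facts are in hand the remaining steps are formal bookkeeping through the Hopf-algebra morphism.
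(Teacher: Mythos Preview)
Your proposal is correct and follows essentially the same route as the paper: both reduce via Theorem~\ref{thm:main} to $x^{\star_{\btr}p}-x^{[p]_{\btr}}=x^p-x^{[p]}=:A$, then exploit that $A$ is primitive, central in $\mathfrak{u}(\fg)$, and satisfies $y\btr A=0$ (from the restricted-derivation hypothesis together with $\sum_i x^i w x^{p-1-i}=\ad_x^{p-1}(w)$) to obtain the $\btr$-derivation property, and deduce Part~(2) from the associative Jacobson identity for $\star_{\btr}$ plus the $p$-linearity of $u\mapsto u^p-u^{[p]}$. Your treatment is in fact more complete: the paper's proof asserts ``1.\ follows'' after establishing the derivation property but never addresses $[D(x),D(y)]=0$, whereas your inductive argument that $E\btr B=0$ for all $E\in\mathfrak{u}(\fg)_+$ (hence $A\btr B=B\btr A=0$, and $[A,B]_{\star_{\btr}}=0$) fills that gap cleanly.
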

\begin{proof}
Let $x,y,z \in \fg$. Set $d(x) = x^{\star_{\btr}p}-x^{[p]_{\btr}}= x^{p}-x^{[p]}$. We notice that 
\begin{align}
(d(x) \btr y)\btr z +y \btr  (d(x) \btr z) &= (d(x) \btr y)\btr z + (y \star_{\btr} d(x))\btr z \nonumber \\
&=(d(x) \btr y)\btr z + (y \btr d(x))\btr z + (yd(x))\btr z \nonumber \\
&=(d(x) \btr y)\btr z + (y \btr d(x))\btr z + (d(x)y)\btr z \label{ref:equationdeux}
\end{align}
where, in the last equality, we have used that $ d(x) = x^p-x^{[p]}$ is a central element in $\mathfrak{u}(\fg)$.
Now, we have
\begin{align*}
y \btr d(x) &=  \sum_k x^{p-1-k}(y \btr x) x^k - y \btr x^{[p]} = \ad_x^{p-1}(y \btr x) - y \btr x^{[p]} =0,
\end{align*}
we have used the fact that $\btr$ induces a restricted derivation to conclude. Hence \eqref{ref:equationdeux} is equal to 
$
( d(x) \star_{\btr} y) \btr z.
$
and 1. follows. Next, $x^{[p]_{\btr}} = x^{\star_{\btr}p} + x^{[p]}-x^{p}$. Since $x\mapsto x^{[p]}-x^{p}$ is $p$-linear, 2. follows from the Jacobson identities for the associative product $\star_{\btr}$. 
\end{proof}
\subsection{Proofs.}
\label{sec:proofs} In this section, we prove Theorem \ref{thm:main}. We begin with two preliminaries remarks before giving the proof of the Theorem, which is divided in two parts.
\subsubsection{}
Let $n\geq 1$ be an integer. We let $\mathcal{O}\mathcal{T}(n)$ be the set of rooted trees with $n$ nodes. We denote by $t_1 \btr t_2 $ the left grafting operation of the tree $t_1$ to the root of the tree $t_2$. We let $\mathcal{L}\mathcal{O}\mathcal{T}$ be the free Lie algebra over the field $k$ generated by $\bigcup_{n\geq 1}\mathcal{O}\mathcal{T}(n) \cup \{ \emptyset \}$. We extend $\btr$ as a post-Lie operation to $\mathcal{L}\mathcal{O}\mathcal{T}$. Then $(\mathcal{L}\mathcal{O}\mathcal{T}, [-,-], \btr)$ is the free postLie algebra over one generator, see e.g. \cite{MKL}. The envelope of $\mathcal{L}\mathcal{O}\mathcal{T}$ is the tensor algebra over $k\mathcal{O}\mathcal{T}$, which we denote $k\mathcal{O}\mathcal{F}$.

\subsubsection{}
Let $p\geq 2$ be a prime number. We let $\mathcal{L}\mathcal{O}\mathcal{T}_p(n)$ be the free $p$-Lie algebra generated by $\mathcal{O}\mathcal{T}$ : they are all the elements of $\mathcal{L}\mathcal{O}\mathcal{T}$ and their $p^k$, $k\geq 1$ powers. Then $k\mathcal{O}\mathcal{F}$ is the free restricted enveloping Lie algebra over $k\mathcal{O}\mathcal{T}$.

\subsubsection{Proof of Theorem \ref{thm:main}: first part.}

Let $\bar{\delta} \colon k\mathcal{O}\mathcal{F} \rightarrow k\mathcal{O}\mathcal{F} \otimes_{k} k\mathcal{O}\mathcal{F}$ be the co-augmentation part of the unshuffle co-product (the reduced co-product). Let $x\in \mathcal{L}\mathcal{O}\mathcal{T}$. Then already
$
\bar{\delta}(x^{\star_{\btr}p}) = 0
$
yields that $x^{\star_{\btr} p}$ is an element of the free restricted Lie algebra $k\mathcal{L}\mathcal{O}\mathcal{T}_{p}$ (see \cite{Re}). But then, we have
$$
x^{\star_{\btr p}} = x^{p} + u,
$$
where $u$ is a word on elements in $\mathcal{L}\mathcal{O}\mathcal{T}$ of length less than $p-1$. Since $\bar{\delta}(u)=0$, one gets that $u$ is in the free Lie algebra $k\mathcal{L}\mathcal{O}\mathcal{T}$.

\subsubsection{Proof of Theorem \ref{thm:main}: second part.}

We let $\mathcal{O}\mathcal{F}(n)$ be the set of all ordered forests with $n$ nodes. We denote by $V(f)$ the set of nodes of a forest $f$.
A linearisation of an ordered forest $f$ is a function $\ell \colon V(f) \to I^{\prec}$ \emph{decreasing for the hereditary order and for the right-to-left linear order} between the nodes attached to the same corolla in $f$ (including the roots of the trees in the forest).
We denote by $\mathcal{O}\mathcal{F}_{\ell}$ the set of linearizations of ordered forests and by $\mathcal{O}\mathcal{C}_{\ell}$ the sub-set of $\mathcal{O}\mathcal{F}_{\ell}$ made of the linearizations of forests of corollas.
\begin{proposition} With the notations introduced so far, let $\tau_1,\ldots,\tau_{n-1}, \tau_{n} \in \mathcal{O}\mathcal{T}$ be rooted planar trees. Then,
\label{prop:formulathetaproduct}
\begin{align}
\label{eqn:seconstatement}
\tau_1 \star_{\btr} \cdots \star_{\btr }\tau_n = \sum_{(c,\ell) \in \mathcal{O}\mathcal{C}_{\ell}(n)} {(c,\ell)}\circ (\tau_1, \ldots, \tau_n),
\end{align}
where $(c,\ell) \circ (\tau_1, \ldots, \tau_n)$ is obtained by substituting to the vertex of $c$ labeled $i$ in $\ell$ by the tree $\tau_i$ and grafting to the left the trees adjacent to that vertex in $f$ to $\tau_i$ (while keeping the ordering of those trees).
\end{proposition}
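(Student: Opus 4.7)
The plan is to argue by induction on $n$, with the statement taken to hold for all tuples $(\tau_1, \ldots, \tau_n) \in \mathcal{O}\mathcal{T}^n$ of planar rooted trees. The base case $n=1$ is immediate, since the unique linearization labels the single vertex by $1$ and substitution returns $\tau_1$. For the inductive step, I would use associativity of $\star_{\btr}$ to write $\tau_1 \star_{\btr} \cdots \star_{\btr} \tau_n = \tau_1 \star_{\btr} G$ with $G := \tau_2 \star_{\btr} \cdots \star_{\btr} \tau_n$. Because $\tau_1 \in \fg$ is primitive in $\mathfrak{u}(\fg)$, the defining formula $E \star_{\btr} F = E_{(1)}(E_{(2)} \btr F)$ together with the implicit convention $1 \btr F = F$ collapses to
\[
\tau_1 \star_{\btr} G \;=\; \tau_1 \cdot G \;+\; \tau_1 \btr G.
\]

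The induction hypothesis, relabeled on $\{2, \ldots, n\}$, gives $G = \sum_{(c', \ell')} (c', \ell') \circ (\tau_2, \ldots, \tau_n)$, each summand being a product $T_1 \cdots T_m \in \mathfrak{u}(\fg)$ where $m$ is the number of corollas of $c'$ and $T_i$ is the tree built by substituting into the $i$-th corolla and performing the prescribed left-graftings. Since $\tau_1$ is primitive, the second formula of \eqref{eqn:extension} shows that $\tau_1 \btr (\cdot)$ is a derivation on products, so by induction on $m$,
\[
\tau_1 \btr (T_1 \cdots T_m) \;=\; \sum_{i=1}^{m} T_1 \cdots T_{i-1}\,(\tau_1 \btr T_i)\,T_{i+1} \cdots T_m,
\]
with $\tau_1 \btr T_i$ the single tree obtained by left-grafting $\tau_1$ onto the root of $T_i$ as its new leftmost child. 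Each summand of $G$ therefore produces exactly $m+1$ algebraic contributions to $\tau_1 \star_{\btr} G$: one from the concatenation term $\tau_1 \cdot G_{c', \ell'}$ and one per corolla of $c'$ from the derivation term.

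The combinatorial counterpart is a bijection between $\mathcal{O}\mathcal{C}_{\ell}(n)$ (labels $\{1, \ldots, n\}$) and pairs consisting of an element $(c', \ell') \in \mathcal{O}\mathcal{C}_{\ell}(n-1)$ (labels $\{2, \ldots, n\}$) together with a choice of position for the vertex labeled $1$. Since $1$ is the smallest label, the hereditary-decreasing constraint prevents vertex $1$ from having any descendants, and the right-to-left decreasing constraint between siblings forces vertex $1$ to be leftmost in its sibling class (including among the roots of the forest). Consequently, vertex $1$ is either an isolated leftmost root or the leftmost child of the root of one of the $m$ corollas of $(c', \ell')$, giving $m+1$ options which match, summand by summand, the algebraic contributions above: $\tau_1 \cdot G_{c', \ell'}$ corresponds to the isolated-root option, and the $i$-th summand of $\tau_1 \btr G_{c', \ell'}$ corresponds to inserting $1$ as leftmost child of the $i$-th corolla.

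The main obstacle is to set up and verify this bijection cleanly, including the check that inserting or deleting vertex $1$ preserves the corolla structure and the linearization constraints on the remaining labels. Both checks are routine: corollas have depth at most one, so inserting/deleting a single-vertex tree or a leftmost child of a root keeps the forest in $\mathcal{O}\mathcal{C}_{\ell}$, and the surviving labels automatically satisfy the required monotonicity conditions because the position of vertex $1$ is entirely forced by the label order. Once the bijection is identified with the algebraic decomposition, matching the term $T_1 \cdots (\tau_1 \btr T_i) \cdots T_m$ with $(c, \ell) \circ (\tau_1, \ldots, \tau_n)$ (for the unique $(c, \ell)$ obtained by inserting $1$ as leftmost child of the $i$-th corolla) is immediate from the definition of the substitution-with-grafting operation.
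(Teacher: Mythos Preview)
Your proposal is correct and follows essentially the same approach as the paper: induction on $n$, splitting off $\tau_1$ via $\tau_1 \star_{\btr} G = \tau_1\, G + \tau_1 \btr G$, expanding the second term by the derivation property of $\tau_1 \btr(-)$, and matching the resulting $m+1$ terms to the $m+1$ positions for the vertex labeled $1$ via the bijection obtained by deleting/inserting that vertex. Your write-up is in fact more explicit than the paper's about why the ordering constraints force vertex $1$ to be either the leftmost root or the leftmost child of some corolla root.
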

\begin{proof} We prove the statement of the proposition by induction on $n$, when $n=0$ there is nothing to prove and with $n=1$ the statement follows from the definition of the product $\star_{\btr}$. Let us suppose the statement is true at the rank $n-1$.
\begin{align}
\tau_1 \star_{\btr} \tau_2\star_{\btr} \cdots \tau_{n} &= 
\tau_1 \star_{\btr} (\tau_2 \star_{\btr} \cdots \star_{\btr}\tau_{n}) \\
& = \tau_1 \star_{\btr} \Big( \sum_{(c,\ell) \in \mathcal{O}\mathcal{C}_{\ell}(n)} {(c,\ell)}\circ (\tau_1, \ldots, \tau_n) \Big) \\
& = \sum_{(c,\ell) \in \mathcal{O}\mathcal{C}_{\ell}(n)} \tau_1 \,\,{(c,\ell)}\circ (\tau_2, \ldots, \tau_n) + \tau \btr ({(c,\ell)}\circ (\tau_2, \ldots, \tau_n)) \\
& = \sum_{(c,\ell) \in \mathcal{O}\mathcal{C}_{\ell}(n)} {(c',\ell)}\circ (\tau_1,\tau_2, \ldots, \tau_n) + \sum_{(c,\ell) \in \mathcal{O}\mathcal{C}_{\ell}(n)} \tau \btr ({(c,\ell)}\circ (\tau_2, \ldots, \tau_n)),
\end{align}
where $(c',\ell)$ is obtained from $(c,\ell)$ by adding to the left of the forest $c$ a root labeled $1$. Write $c = c_2\cdots c_n$. By using the fact that $\btr$ is a derivation, we may expand the general term of the second sum as 
\begin{align}
&\sum_{i=1,\cdots, k} (c_2, \ell_{|V({c_2})})\circ (\tau_{\ell_{|V(c_2)}})\cdots (c_{i-1}, \ell_{|{V(c_{i-1})}})\circ (\tau_{\ell_{|V(c_{i-1})}}) \\ 
&\hspace{1.5cm}(\bullet \btr c_i,\,\ell_{|V(c_i)}) \circ (\tau_1,\tau_{\ell_{|V(c_i)}}) (c_{i+1}, \ell_{|V({c_{i+1}})}) \circ (\tau_{\ell_{|V(c_{i+1})}})\cdots  (c_{k}, \ell_{|V({c_{k})}}) \circ (\tau_{\ell_{|V(c_k)}}),
\end{align}
where, again, $\ell_{|c_i}$ is the restriction of the labeling $\ell$ to the corolla $c_i$ in $c$ and $\ell_{|c_i}$ labels the leftmost vertex in the corolla $\bullet \btr c_i$ by $1$.
 We end the proof by noticing that the map
 $$
(c',\ell) \mapsto ((c,\ell), i(c',\ell)),
 $$
is a bijection from $\mathcal{O}\mathcal{F}_{\ell}$ to its image, with $c$ is the corolla obtained from $c'$ by removing the vertex labeled $1$, $\ell$ the restriction of $\ell$ to $V(c)$ and $i(c',\ell)$ is the index of the corolla containing the node labeled $1$.
\end{proof}
Equation \eqref{eqn:seconstatement} yields, when $\tau_1 =\cdots = \tau_n = \bullet \in \fg $,
\begin{align*}
\bullet^{\star_{\btr} n} = \sum_{c \in  \mathcal{C}(n)} \frac{n!}{\prod_{v \in V(c)} | v \to_c \cdot| } c,
\end{align*}
where $v \to \cdot$ is equal to all nodes in $f=c_1 \cdots c_n$ less than $v$ in the hereditary plus right-to-left order. 
The coefficient ${\prod_{v \in V(c)} | v \to_c \cdot| }$ is given by
$$
{\prod_{v \in V(c)} | v \to_c \cdot| } = (\ell_k-1)! \cdots (\ell_1-1)!(\ell_1+\cdots+\ell_k)(\ell_1+\cdots+\ell_{k-1})\cdots \ell_1.
$$
\subsection{Cancellations in formula \eqref{eqn:pstruct}}
\label{sec:simplifications} In this section, we compute the coefficients that appear in the main formula \eqref{eqn:pstruct} and expands  $x^{[p]_{\btr}}$ over a certain family of right-iterated brackets.

\subsubsection{Equivalence classes.}
Let $n\geq 1$ be a positive integer and denote by $\mathcal{S}_n$ the symmetric group. Given two tuples $(\ell_1,\ldots,\ell_n)$ and $(\ell_1',\ldots,\ell_n')$ as in formula \eqref{eqn:pstruct}, we say that they are \emph{equivalent} if there exists $\sigma\in\mathcal{S}_n$ such that $\ell'_{\sigma(i)}=\ell_i,$ for any $1\leq i\leq n$.

Equivalence classes of tuples $(\ell_1,\ldots,\ell_n)$ with $\ell_1+\cdots+ \ell_n = p$, $\ell_i \geq 1$ are parametrized by integer partitions $\lambda = 1^{m_1}\cdots p^{m_p}$, $m_i \geq 0$  of $p$ (we will write $\lambda\vdash p$). We also denote by $\lambda$ the tuple $(k:m_k\neq 0)$.

Given a tuple $(\ell_1 \leq \cdots \leq \ell_n)$ as in \eqref{eqn:pstruct}, we denote by $L(\{x_{\ell_1},\ldots, x_{\ell_n}\})$ the free Lie algebra generated by $\{x_{\ell_1},\ldots, x_{\ell_n}\}$ (for example, if $\ell_1=\ell_2=1, \ell_3=2$, we consider the free Lie algebra $L(\{x_1,x_2\})$ generated by all the bracketings of $x_{1},x_{2}$). 

Let us fix an integer partition $\lambda \vdash p$ and set $n = \sum_k m_k$ for the length of the partition integer $\lambda$. We set further
\begin{align}
\label{eqn:pl}
P_{\lambda}(x)&= \frac{1}{n}\frac{(p-1)!}{0!^{m_1}\cdots (p-1)!^{m_p}} \sum_{\substack{\lambda \,\sim\, (\ell_1,\ldots,\ell_n) \\ \ell_1 + \cdots + \ell_{n} = p \\ }} {C^{-1}_{\ell_1,\ldots,\ell_n}}\,[x_{\ell_1} \cdots x_{\ell_{n}}] \in {\mathfrak{g}}.
\end{align}
We use ${\rm Sh}(n-s,s)$ for the set of all shuffle permutations of $[n] = \{1,\ldots,n-s\}\cup \{n-s+1,\ldots,n\}$: 
\begin{align}
{\rm Sh}(n-s,s) = \{ \sigma\colon [n]\to [n] ~\colon~ \sigma(n-s) > \cdots > \sigma(1), ~  \sigma(n) > \cdots > \sigma(n-s+1) \} , 
\end{align}
and ${\rm Stab}(\lambda) = \mathcal{S}_{1}^{m_1}\times \cdots \times \mathcal{S}_{p}^{m_p}$ is the stabilizer of $\lambda$ for the left action of $\mathcal{S}_n$ on tuples with size $n\geq 1$.
Friedrich's criterion (see \cite{Re}) yields the following Lemma. 
\begin{lemma}\label{lem:friedrich}
Let $p,\lambda,n$ be as above and assume $n\geq 2$. For any integer $1\leq s \leq n-1$ and tuples $(\ell_1,\ldots,\ell_n)$ with type $\lambda$, 
\begin{equation}
\label{eqn:friedrichcriterion}
\sum_{\alpha \in {\rm Sh}(s,n-s)} C^{-1}_{\alpha\cdot\ell_1,\ldots,\alpha\cdot\ell_n}=0.
\end{equation}
\end{lemma}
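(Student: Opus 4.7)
The plan is to apply Ree's theorem (see \cite{Re}): a homogeneous element $u\in T^n(V)$ of degree $n\geq 2$ lies in the free Lie algebra if and only if the pairing $(u,w_1\shuffle w_2)$ vanishes for every factorization into two nonempty words $w_1,w_2$. I will apply this to $u=x^{\star_{\btr}p}-x^p$, which the first part of the proof of Theorem \ref{thm:main} shows belongs to the free Lie algebra $k\mathcal{L}\mathcal{O}\mathcal{T}$, viewed as the free Lie algebra on the generators $\{x^{\bullet\ell}\colon 1\leq\ell\leq p\}\subset\mathcal{L}\mathcal{O}\mathcal{T}$.

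First I would extract the tensor-algebra coefficients of $x^{\star_{\btr}p}$. Specializing Proposition \ref{prop:formulathetaproduct} to $\tau_1=\cdots=\tau_p=x$, each ordered corolla forest $c=c_1\cdots c_n$ with corolla sizes $(\ell_1,\ldots,\ell_n)$ corresponds to the monomial $x^{\bullet\ell_1}\otimes\cdots\otimes x^{\bullet\ell_n}$, and using the identity
$$\prod_{v\in V(c)}|v\to_c\cdot|=(\ell_1-1)!\cdots(\ell_n-1)!\,\ell_1(\ell_1+\ell_2)\cdots(\ell_1+\cdots+\ell_n)$$
derived at the end of Section \ref{sec:proofs} together with $\sum_i\ell_i=p$, the coefficient simplifies to $\frac{(p-1)!}{(\ell_1-1)!\cdots(\ell_n-1)!}\,C^{-1}_{\ell_1,\ldots,\ell_n}$. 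The subtraction of $x^p$ only alters the coefficient of the all-ones monomial.

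Fixing a tuple $(\ell_1,\ldots,\ell_n)$ of type $\lambda$ with $n\geq 2$ and an integer $1\leq s\leq n-1$, take $w_1=x^{\bullet\ell_1}\otimes\cdots\otimes x^{\bullet\ell_s}$ and $w_2=x^{\bullet\ell_{s+1}}\otimes\cdots\otimes x^{\bullet\ell_n}$. The shuffle product $w_1\shuffle w_2$ is a sum of noncommutative monomials indexed by shuffles $\alpha\in\mathrm{Sh}(s,n-s)$, each being (up to convention) $x^{\bullet\ell_{\alpha\cdot 1}}\otimes\cdots\otimes x^{\bullet\ell_{\alpha\cdot n}}$. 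Pairing with $u$ and pulling out the permutation-symmetric prefactor yields
$$\frac{(p-1)!}{(\ell_1-1)!\cdots(\ell_n-1)!}\sum_{\alpha\in\mathrm{Sh}(s,n-s)}C^{-1}_{\alpha\cdot\ell_1,\ldots,\alpha\cdot\ell_n}=0.$$
Since $(p-1)!\equiv -1\pmod p$ by Wilson and each $(\ell_i-1)!$ (with $\ell_i\leq p-1$ when $n\geq 2$) is a unit modulo $p$, the prefactor is invertible and can be cancelled, giving exactly \eqref{eqn:friedrichcriterion}. In the edge case $\lambda=(1^p)$, where the subtraction of $x^p$ modifies the expansion, the claimed identity reduces to the trivial congruence $\binom{p}{s}C^{-1}_{1,\ldots,1}\equiv 0\pmod p$.

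The main (minor) obstacle is reconciling the action $\alpha\cdot(\ell_1,\ldots,\ell_n)$ in the statement with the convention under which the shuffle product $w_1\shuffle w_2$ is written in Ree's criterion; once this bookkeeping is aligned, the argument is purely a matter of reading off the coefficient from Proposition \ref{prop:formulathetaproduct}.
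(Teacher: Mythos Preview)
Your proof is correct and follows essentially the same idea as the paper: both apply Friedrich's (Ree's) criterion to the Lie element $x^{\star_{\btr}p}-x^p$ and read off the shuffle identity from its corolla-forest expansion. Your presentation is in fact more direct than the paper's, which detours through the decomposition $\mathcal{S}_n\simeq\mathrm{Sh}(\lambda)\times\mathrm{Stab}(\lambda)$ and a choice of test word $v=\beta\cdot\lambda$ before arriving at the same conclusion; your handling of the prefactor and the $\lambda=1^p$ edge case is also clean.
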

\begin{proof} Let $\langle - | - \rangle$ be the symmetric bilinear product on $\mathbb{C}\{x_{\ell_1},\ldots,x_{\ell_n}\}^{\star}$ defined by the requirement that $\{x_{\ell_1},\ldots,x_{\ell_n}\}$ is an orthonormal basis.
Friedrich's criterion implies that for any non-empty word $v$ with length greater than $2$:
\begin{align*}
0&=\sum_{\alpha \in {\rm Sh}(s,n-s)}\sum_{\substack{\ell_1 + \cdots + \ell_{n} = p \\ \lambda \,\sim\, (\ell_1,\ldots,\ell_n)}} {C^{-1}_{\ell_1,\ldots,\ell_n}}\, \langle \, x_{\ell_1} \cdots x_{\ell_{n}} \,|\, \alpha\cdot v \,\rangle \\ 
&= \sum_{\alpha \in {\rm Sh}(s,n-s)}\sum_{\substack{\ell_1 + \cdots + \ell_{n} = p \\ \lambda \,\sim\, (\ell_1,\ldots,\ell_n)}} {C^{-1}_{\ell}}\, \langle \, x_{\alpha^{-1}\cdot \ell_1} \cdots x_{\alpha^{-1}\cdot\ell_{n}} \,|\, v \,\rangle. 
\end{align*}
By choosing $v = \beta \cdot \lambda$, we obtain: 
\begin{align*}
0&= \sum_{\alpha\in{\rm Sh}(s,n-s)} \sum_{\substack{\ell_1 + \cdots + \ell_{n} = p \\ \lambda \,\sim\, (\ell_1,\ldots,\ell_n)}} {C^{-1}_{\ell}} \, \langle x_{\beta^{-1}\alpha^{-1}\cdot \ell_1} \cdots x_{\beta^{-1}\alpha^{-1}\cdot\ell_{ n}} \,|\, x_{\lambda_1}\cdots x_{\lambda_n} \rangle \\
&= \sum_{\alpha\in{\rm Sh}(s,n-s)} \sum_{\gamma \in {\rm Sh}(\lambda)} {C^{-1}_{\gamma \cdot\lambda}} \, \langle x_{\beta^{-1}\alpha^{-1}\gamma \cdot \lambda_1} \cdots x_{\beta^{-1}\alpha^{-1}\gamma\cdot\lambda_{n}} \,|\, x_{\lambda_1}\cdots x_{\lambda_n} \rangle \\
&= \sum_{\substack{\alpha\in{\rm Sh}(s,n-s) \\ \gamma \in {\rm Sh}(\lambda)\\ \beta^{-1}\alpha^{-1}\gamma \in {\rm Stab}{(\lambda)}}} {C^{-1}_{\gamma \cdot\lambda}} \\
& = \sum_{\substack{\alpha\in{\rm Sh}(s,n-s) \\ \gamma \in {\rm Sh}(\lambda)\\ \gamma \in \alpha\beta{\rm Stab}{(\lambda)}}} {C^{-1}_{\alpha\beta \cdot\lambda}} \\
&= \sum_{\alpha\in{\rm Sh}(s,n-s)} \sum_{\substack{\gamma \in {\rm Sh}(\lambda)\\ \gamma \in \alpha\beta{\rm Stab}{(\lambda)}}} {C^{-1}_{\gamma \cdot\lambda}} = \sum_{\alpha\in{\rm Sh}(s,n-s)} \sum_{\substack{\gamma \in {\rm Sh}(\lambda)\\ \gamma \in \alpha\beta{\rm Stab}{(\lambda)}}} {C^{-1}_{\alpha\beta \cdot\lambda}}\\
&= \sum_{\alpha\in{\rm Sh}(s,n-s)} {C^{-1}_{\alpha\beta \cdot\lambda}} \sum_{\substack{\gamma \in {\rm Sh}(\lambda)\\ \gamma \in \alpha\beta{\rm Stab}{(\lambda)}}} 1\\
&= \sum_{\alpha\in{\rm Sh}(s,n-s)} {C^{-1}_{\alpha\beta \cdot\lambda}},
\end{align*}
where the last equality follows from the fact that for each permutation $\alpha \beta$, there exists an unique couple $(s,p)$ in ${\rm Sh}(\lambda)\times {\rm Stab}(\lambda)$ so that $\alpha\beta =  sp$. Hence, $\alpha\beta {\rm Stab}(\lambda) \cap {\rm Sh}(\lambda)= \{s\} $
\end{proof}
\begin{remark}
By choosing instead $v = u_k\beta  \cdot \lambda $, where $u_k$ is the permutation equal to identity on $\{1,\ldots, s\}$ and sending $s+j$ to $n-j$, we obtain 
$$
0 = \sum_{\alpha\in{\rm Sh}(s,n-s)} {C^{-1}_{\alpha u_k\beta \cdot\lambda}} = \sum_{\alpha\in\overline{\rm Sh}(s,n-s)} {C^{-1}_{\alpha\beta \cdot\lambda}},
$$
where $\overline{\rm Sh}{(n-s,s)}$ is the subset of shuffle permutations of $\{1,\ldots,n-s\}\cup \{n < n-1 < \cdots < n-s+1\}$ (note the reversing in the order of the elements).
\end{remark}
Next, we compute $P_{\lambda}(x)$ in the two extreme cases: 
\begin{enumerate}
\item $\lambda$ has only one part not equal to $1$ (this means : $m_1=p-2$ and $m_2=1$) (we call such partition \emph{a hook}) or
\item each one of the $n$ parts has a different size (this means $m_i=1$, $n \geq i \geq 1$).
\end{enumerate}
\subsubsection{The hook case.} We begin with the hook case. We assume hereforth $p \geq 2$.
Denote by
$$
(1,\ldots,\underset{\rm position\,j}{2},\ldots,1) = \varepsilon^j,\quad 1\leq j \leq p-1.
$$ From the definition of right-ordered bracket $[\,\cdots]$, we infer
$$
[x_{\varepsilon^j_1} \cdots x_{\varepsilon^j_{p-1}}] = 0, \quad j\leq p-3.
$$
Thus, 
\begin{align*}
P(1^{p-2}2) &= \frac{1}{p-1}{(p-1)!}(C_{1^{p-2}2}^{-1} - C_{1^{p-3}21}^{-1})[x_2,\ldots,x_2,x_2]\\
&=-{(p-1)!}\bigl((p-2)!^{-1} - (p-3)!^{-1}(p-1)^{-1})\bigl)[x_1,\ldots,x_1,x_2] \\
&=-{\bigl((p-1) - (p-2)\bigl)}[x_1,\ldots,x_1,x_2]\\
&=-[x_1,\ldots,x_1,x_2].
\end{align*}
For any integer $2\leq n \leq p-1$, we compute the value of \eqref{eqn:pl} if $\lambda = 1^{n-1}(p-n+1)$.
\begin{align*}
P(1^{n-1}(p-n+1))&=\frac{1}{n}\frac{(p-1)!}{(p-n)!}(C^{-1}_{1^{n-1}(p-n+1)} - C^{-1}_{1^{n-2}(p-n+1)1})[x_1\cdots x_1,x_{p-n+1}] \\
&=\frac{1}{n}\frac{(p-1)!}{(p-n)!}\bigl((n-1)!^{-1} - (n-2)!^{-1}(p-1)^{-1}\bigl)[x_1\cdots x_1,x_{p-n+1}]\\
&=\frac{1}{n}\frac{(p-1)!}{(p-n)!}\bigl((n-1)!^{-1} + (n-2)!^{-1}\bigl)[x_1,\ldots, x_1, x_{p-n+1}]\\
&=\frac{1}{n}\binom{p-1}{n-1}\bigl(1+(n-1)\bigl)[x_1,\ldots,x_1,x_{p-n+1}] \\
&=(-1)^{n}[x_1,\ldots,x_1, x_{p-n+1}].
\end{align*}

\subsubsection{The case where all the parts have a different size.}
Recall that a basis of the subspace of the free Lie algebra $L(x_{\ell_1},\ldots,x_{\ell_n})$ formed by all Lie words in which exactly one occurrence of each generator appears is given by 
$$
[x_{\sigma\cdot \ell}]:=[x_{\sigma\cdot\ell_{1}} \cdots x_{\sigma \cdot \ell_{n-1}} x_{\ell_n}],\quad \sigma \in \mathcal{S}_{n}\,{\rm with}\,\, \sigma(n)=n.
$$
\begin{proposition}
\label{prop:trucnul}
Let $n\geq 1$ and $1\leq j\leq n-1$. Let $c\in {\rm Sh}(n-1,1)$ be the shuffle permutation sending $n$ to $j$. With the notations introduced before,
$$
[x_{c\cdot \ell}] = [x_{\ell_1}\cdots x_{\ell_{j-1}} x_{\ell_n} x_{\ell_j} \cdots x_{\ell_{n-1}}] = \sum_{\substack{1\leq s \leq n-1\\\sigma \in \overline{\rm UnSh}{(n-s,s)} \\ \sigma c(n)=n \\ \sigma^{-1}(n-s+1)\neq n}} (-1)^{s}[x_{\sigma c \cdot \ell}],
$$
where $\overline{\rm UnSh}{(n-s,s)}$ is the subset of unshuffle permutations of $\{1,\ldots,n-s\}\cup \{n < n-1 < \cdots < n-s+1\}$ (note the reversing in the order of the elements). 
\end{proposition}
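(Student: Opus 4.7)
The plan is to prove this expansion formula by induction on the distance $n-j$ from the inserted letter $x_{\ell_n}$ to the canonical rightmost position, repeatedly applying the Jacobi identity to transport $x_{\ell_n}$ rightward and matching the resulting sum with the claimed reversed-unshuffle indexed expression. I would first rewrite
\[
[x_{c\cdot\ell}] = [x_{\ell_1},\ldots,x_{\ell_{j-1}},[x_{\ell_n},[x_{\ell_j},\ldots,x_{\ell_{n-1}}]]]
\]
and apply Jacobi at the innermost nested position, splitting it into a \emph{transport term}, in which $x_{\ell_n}$ has jumped one step to the right past $x_{\ell_j}$, and a \emph{nesting term}, in which the sub-bracket $[x_{\ell_n},x_{\ell_j}]$ forms and is absorbed into a right-normed word on one fewer letter.

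Next, I would iterate this procedure on the transport term until $x_{\ell_n}$ reaches the rightmost slot; this yields the $s=1$ contribution, which is $(-1)^{1}[x_{\sigma c\cdot\ell}]$ with $\sigma$ the unique cyclic permutation bringing position $j$ to position $n$. For each nesting term, the inductive hypothesis applies on the smaller multilinear space (with $[x_{\ell_n},x_{\ell_j}]$ treated as a new letter) and produces contributions with $s\geq 2$. The parameter $s$ in the statement can be interpreted as the number of successive Jacobi nestings performed, the sign $(-1)^{s}$ records the anti-symmetry used at each nesting step to flip $[x_{\ell_n},-]$ into $-[-,x_{\ell_n}]$, and the resulting combinatorial data is exactly encoded by a permutation $\sigma\in\overline{\mathrm{UnSh}}(n-s,s)$. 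The constraint $\sigma c(n)=n$ then records that the final position of $x_{\ell_n}$ is $n$, while $\sigma^{-1}(n-s+1)\neq n$ excludes the degenerate configuration in which $x_{\ell_n}$ is the last absorbed letter.

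The main obstacle is the sign and permutation bookkeeping, which is error-prone in the direct Jacobi approach. A cleaner alternative would be to lift the identity to the tensor envelope $k\mathcal{O}\mathcal{F}$, expand $[x_{c\cdot\ell}]$ as a signed sum of words via the standard right-normed bracket formula, pair the resulting expression with an arbitrary test word $x_{\tau\cdot\lambda}$ with $\tau(n)=n$, and invoke the reversed-order variant of Friedrich's criterion from the remark after Lemma \ref{lem:friedrich}. In that route, the coefficient matching reduces to a shuffle-vanishing identity of the same type as in the lemma, and the characterization of the summation set as $\overline{\mathrm{UnSh}}(n-s,s)$ emerges automatically from shuffle--unshuffle duality, bypassing the need to track Jacobi applications by hand.
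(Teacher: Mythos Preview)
Your primary approach is essentially the paper's: both argue by induction using a single Jacobi application to split $[x_{\ell_1}\cdots x_{\ell_{j-1}}x_{\ell_n}x_{\ell_j}\cdots x_{\ell_{n-1}}]$ into a transport term (with $x_{\ell_n}$ shifted one slot to the right) and a nesting term (with $[x_{\ell_n},x_{\ell_j}]$ treated as a single letter on $n-1$ symbols), then invoke the induction hypothesis on each piece. The paper organizes this as a double induction on $n$ and on $j$ (base case $j=n-1$ via anti-symmetry, then step $j\mapsto j-1$), rather than unrolling the transport all the way first as you propose; the bulk of the paper's argument is precisely the explicit change of variables $\hat{\sigma}\mapsto\sigma$ and $\tilde{\sigma}\mapsto\sigma=\tilde{\sigma}\tau$ needed to show that the two contributions together exhaust the set $\{\sigma\in\overline{\mathrm{UnSh}}(n-s,s):\sigma c(n)=n,\ \sigma^{-1}(n-s+1)\neq n\}$, which is exactly the ``error-prone bookkeeping'' you anticipate. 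Your alternative route via Friedrich's criterion is not used by the paper for this proposition and, as sketched, does not make clear how the specific summation set $\overline{\mathrm{UnSh}}(n-s,s)$ with the two constraints would emerge from a pairing argument alone.
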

\begin{proof} We prove the result by induction on $n\geq 1$ and $1\leq j \leq n-1$. The result trivially holds for $n=2$. Assume that the result holds for all $n\leq N$. When $j=n-1$, on one hand
\begin{align}
[x_{\ell_1}\cdots x_{\ell_n}x_{\ell_{n-1}}] = -[x_{\ell_1}\cdots x_{\ell_{n-1}}x_{\ell_n}]
\end{align}
and on the other hand, 
\begin{align}
\sum_{\substack{1\leq s \leq n-1\\ \sigma \in \overline{\rm UnSh}{(n-s,s)} \\ \sigma(n-1)=n \\ \sigma^{-1}(n-s+1)\neq n}} (-1)^{s}[x_{\sigma c\cdot \ell}] & = \sum_{\substack{2\leq s \leq n-1\\\sigma \in \overline{\rm UnSh}{(n-s,s)} \\ \sigma(n-1)=n \\ \sigma^{-1}(n-s+1)\neq n}} (-1)^{s}[x_{\sigma c\cdot \ell}] -\sum_{\substack{\sigma \in \overline{\rm UnSh}(n-1,1)\\ \sigma(n-1)=n}}[x_{\sigma c \cdot \ell}] \\ 
&= \sum_{\substack{2\leq s \leq n-1\\\sigma \in \overline{\rm UnSh}{(n-s,s)} \\ \sigma(n-1)=n \\ \sigma^{-1}(n-s+1)\neq n}} (-1)^{s}[x_{\ell_{\sigma c}}] -[x_{\ell_1},\ldots,x_{\ell_{n-1}},x_{\ell_{n}}] \label{eqn:sumtodiscussone}.
\end{align}
the sum in the right-hand side of the last equality is empty. In fact, for any $\sigma \in {\rm \overline{UnSh}}$$(n-s,s)$ with $s\geq 2$ and $\sigma(n-1)=n$, we have $\sigma^{-1}(n-1) > \sigma^{-1}(n) = n-1 $ implies $\sigma^{-1}(n-1)=n$. Hence, in the right-hand side of \eqref{eqn:sumtodiscussone}, in the partial sum over $2\leq s \leq n-1$, only the term corresponding to $s=2$ contributes. When $s=2$, the condition $\sigma^{-1}(n-2+1)\neq 2$ and $\sigma(n-1)=n$ are incompatible.
Assume the result holds for $p+1 \leq j \leq N-1$, $p\geq 1$. The Jacobi identity implies:
\begin{align*}
&[x_{\ell_1}\cdots x_{\ell_{p-1}} x_{\ell_n} x_{\ell_p} \cdots x_{\ell_{n-1}}] = [x_{\ell_1} \cdots x_{\ell_{p-1}} [x_{\ell_n},x_{\ell_p}] x_{\ell_{p+1}} \cdots x_{\ell_{n-1}}] \\ 
&\hspace{7cm}+[x_{\ell_1}\cdots x_{\ell_{p}} x_{\ell_n} x_{\ell_{p+1}} \cdots x_{\ell_{n-1}}]. 
\end{align*}
We use the induction hypothesis to infer (see the definition of $\hat{x}$ and $\hat{c}$ after the computations), 
\begin{align}
[x_{\ell_1}\cdots x_{\ell_{p-1}} [x_{\ell_n}, x_{\ell_p}] \cdots x_{\ell_{n-1}}]
&= - [x_{\ell_1}\cdots x_{\ell_{p-1}} [x_{\ell_p}, x_{\ell_n}] x_{\ell_{p+1}} \cdots x_{\ell_{n-1}}] \\
& = - \sum_{\substack{1\leq s \leq n-2\\\sigma \in \overline{\rm UnSh}{(n-1-s,s)} \\ \sigma \hat{c}(n-1)=n-1 \\ \sigma^{-1}(n-s)\neq n-1}}[\hat{x}_{(\sigma \hat{c}) \cdot \hat{\ell}_1}, \ldots, \hat{x}_{(\sigma \hat{c}) \cdot \hat{\ell}_{n-2}}, \hat{x}_{(\sigma \hat{c}) \cdot \hat{\ell}_{n-1}}] \label{eqn:proof:cancellationsun}
\end{align}
with $(\hat{x}_{\hat{\ell}_i})_{n-1 \geq i\geq 1}$ defined by $\hat{x}_{\hat{\ell}_{i}} = x_{{\ell}_{i}}$, $1\leq i \leq p-1$, $\hat{x}_{\hat{\ell}_{k}} = {x}_{{\ell}_{k+1}}$, $p \leq k \leq n-2$, $\hat{x}_{\hat{\ell}_{n-1}} = [x_{\ell_p},x_{\ell_n}]$. The permutation $\hat{c} \in \mathcal{S}_{n-1}$ is such that $\hat{x}_{\hat{c}\cdot \hat{\ell}_1}\cdots \hat{x}_{\hat{c}\cdot \hat{\ell}_{n-1}} = \hat{x}_{\hat{\ell}_1} \cdots \hat{x}_{\hat{\ell}_{p-1}}\hat{x}_{\hat{\ell}_{n-1}}\hat{x}_{\hat{\ell}_{p}} \cdots \hat{x}_{\hat{\ell}_{n-2}}.
$ We replace in \eqref{eqn:proof:cancellationsun} $\hat{x}_{\hat{\ell}_{n}}$ by its definition to obtain
\begin{align}
\eqref{eqn:proof:cancellationsun} &= - \sum_{\substack{1\leq s \leq n-2\\\hat{\sigma} \in \overline{\rm UnSh}{(n-1-s,s)} \\ \hat{\sigma} \hat{c}(n-1)=n-1 \\ \hat{\sigma}^{-1}(n-s)\neq n-1}}[\hat{x}_{\hat{\sigma} \hat{c} \cdot \hat{\ell}_1}, \ldots, \hat{x}_{\hat{\sigma} \hat{c} \cdot \hat{\ell}_{n-2}}, {x}_{{\ell}_p}, {x}_{{\ell}_n}].
\end{align}
Now, with the change of variable defined by 
$$
\hat{\sigma} \mapsto \sigma, \quad 
\begin{cases} 
\sigma(i) = \hat{\sigma}(i),& 1 \leq i \leq p-1  \\
\sigma(p)=\hat{\sigma}(p)+1 = n\\
\sigma(p+1) = n-1 \\
\sigma(j) = \hat{\sigma}(j-1) & p+2 \leq j \leq n,
\end{cases}
$$
we obtain :
\begin{align}
\eqref{eqn:proof:cancellationsun}&= - \sum_{\substack{1\leq s \leq n-1\\\sigma \in \overline{\rm UnSh}{(n-s,s)} \\ \sigma {c}(n)=n \\ \sigma c(p)=n-1 \\ \sigma^{-1}(n-s+1)\neq n}}[{x}_{(\sigma {c}) \cdot {\ell}_1}, \ldots, {x}_{(\sigma {c}) \cdot {\ell}_{n-2}}, {x}_{(\sigma c) \cdot {\ell}_{n-1}}, {x}_{(\sigma c) \cdot {\ell}_n}].
\end{align}
Moreover, we have
\begin{align}
[x_{\ell_1} \cdots x_{\ell_{p-1}}x_{\ell_p}x_{\ell_n}\cdots x_{\ell_{n-1}}] & = \sum_{\substack{1\leq s \leq n-1\\ \tilde{\sigma} \in \overline{\rm UnSh}{(n-s,s)} \\ \tilde{\sigma} \tilde{c}(n)=n \\ \tilde{\sigma}^{-1}(n-s+1)\neq n}} [x_{(\sigma \tilde{c})\cdot\ell_1},\ldots,x_{(\sigma \tilde{c})\cdot\ell_n}] \\
& = \sum_{\substack{1\leq s \leq n-1\\\sigma \in \overline{\rm UnSh}{(n-s,s)} \\ \tilde{\sigma}\tau{c}(n)=n \\ \tilde{\sigma}^{-1}(n-s+1)\neq n}} [x_{(\sigma\tau {c})\cdot\ell_1},\ldots,x_{(\sigma \tau {c})\cdot\ell_n}].\label{eqn:bidule}
\end{align}
Next, we make the change of variables $\tilde{\sigma} \mapsto \sigma,\,  \sigma = \tilde{\sigma}\tau$. Since $\sigma^{-1}(n-k) = \tau \tilde{\sigma}^{-1}(n-k) = \tilde{\sigma}^{-1}(n-k)$ for $0 < k\leq s$ as $\tilde{\sigma}^{-1}(n-k) >  p+1$ and $\sigma^{-1}(n) = p$, we get $\sigma \in \overline{\rm UnSh}{(n-s,s)}$. Also, $\sigma c (p) = \sigma(p+1) = \tilde{\sigma}(p)\leq n-s$. We infer
\begin{align}
\eqref{eqn:bidule}& = \sum_{\substack{1\leq s \leq n-1\\\sigma \in \overline{\rm UnSh}{(n-s,s)} \\ \sigma{c}(n)=n \\ \sigma c(p)\leq n-s \\ \sigma^{-1}(n-s+1)\neq n}} [x_{(\sigma {c})\cdot\ell_1},\ldots,x_{(\sigma {c})\cdot\ell_n}].
\end{align}
The induction step is complete and the proof is done.
\end{proof}
Let us going back to the computations of $P_{\lambda}(x)$. By using Proposition \ref{prop:trucnul}, we infer
\begin{align*}
P_{\lambda}(x) &= \frac{1}{n} \frac{(p-1)!}{\ell_1!\cdots \ell_n!} \sum_{s=1\ldots n-1}\sum_{\substack{\alpha \in S_n\\ 1\leq \alpha(n)<n\\ \sigma \in \overline{\rm UnSh}{(n-s,s)} \\ \sigma \alpha = \beta \\ \beta(n)=n \\ \sigma^{-1}(n-s+1)\neq n }} {(-1)^{s}}{C^{-1}_{\alpha\cdot\ell}}[ x_{\beta \cdot\ell}]
+\frac{1}{n} \frac{(p-1)!}{\ell_1!\cdots \ell_n!}\sum_{\substack{\alpha \in S_n \\  \alpha(n)=n}} C^{-1}_{\alpha\cdot\ell}[x_{\alpha\cdot\ell}] \\
&=\frac{1}{n}\frac{(p-1)!}{\ell_{1}!\cdots \ell_{n}!}\sum_{s=1\ldots n-1}(-1)^s\sum_{\substack{\sigma \in \overline{\rm Sh}(n-s,s)\\ \beta(n)=n\\  \sigma(n)<n \\ \sigma^{-1}(n-s+1)\neq n}} {C^{-1}_{\sigma\beta \cdot \ell}}[ x_{\beta \cdot\ell} ] +\frac{1}{n} \frac{(p-1)!}{\ell_1!\cdots \ell_n!}\sum_{\substack{\alpha \in S_n \\  \alpha(n)=n}} C^{-1}_{\alpha\cdot\ell}[x_{\alpha\cdot\ell}]
\end{align*}
If $ n-1 \geq s \geq 1$, by applying Lemma \ref{lem:friedrich}, we obtain 
\begin{align*}
\sum_{\substack{\sigma \in \overline{\rm Sh}(n-s,s)\\ \sigma(n)<n \\ \sigma(n-s+1)\neq n}} {C^{-1}_{\sigma\beta \cdot \ell}} = \sum_{\substack{\sigma \in \overline{\rm Sh}(n-s,s)\\ \sigma(n-s+1)\neq n}} {C^{-1}_{\sigma\beta \cdot \ell}} = - \sum_{\substack{\sigma \in \overline{\rm Sh}(n-s,s)\\ \sigma(n-s+1)=n}} {C^{-1}_{\sigma\beta \cdot \ell}}
\end{align*}
and we infer the following formula for $P_{\lambda}(x)$:
\begin{align*}
P_{\lambda}(x) &= \frac{1}{n} \sum_{\substack{\beta \in \mathcal{S}_{n}\\ \beta(n)=n}} \frac{(p-1)!}{\ell_1!\cdots \ell_n!}\sum_{s=1\ldots n-1} (-1)^{s+1}\sum_{\substack{\sigma \in \overline{\rm Sh}(n-s,s)\\ \sigma(n-s+1)=n}} C^{-1}_{\sigma\beta\cdot\ell} [x_{\beta\cdot\ell}] \\
&\hspace{2cm}+ \frac{1}{n} \sum_{\substack{\beta \in \mathcal{S}_{n}\\ \beta(n)=n}} \frac{(p-1)!}{\ell_1!\cdots \ell_n!}C^{-1}_{\beta\cdot\ell}[x_{\beta\cdot\ell}] \\
&= \frac{1}{n}\frac{(p-1)!}{\ell_1!\cdots \ell_n!} \sum_{\substack{\beta \in \mathcal{S}_{n}\\ \beta(n)=n}} C^{-1}_{\beta\cdot\ell}+\sum_{s=1\ldots n-1} (-1)^{s+1}\sum_{\substack{\sigma \in \overline{\rm Sh}(n-s,s)\\ \sigma(n-s+1)=n}} C^{-1}_{\sigma\beta\cdot\ell} \quad [x_{\beta\cdot\ell}]
\end{align*}

\subsubsection{The general case.}
Let $\lambda = 1^{m_1}2^{m_2}\cdots p^{m_p}$ be an integer partition of $p$ with $n$ parts. Consider the subset ${\rm Sh}({\lambda})^{+}$ of shuffles $\mathcal{S}_{n}$ relatively to the partition into intervals of $[n]$ equal to $\{ [m_j+1, m_{j+1}],\, 0\leq j \leq k\}$, with the usual conventions, and such that 
\begin{enumerate}
\item $\sigma(n)=n$,
\item $\sigma^{-1}(n-1) \leq m_0+\cdots+m_{k-1}$.
\end{enumerate}
Consider the family $\{ [x_{\sigma \cdot \ell}] \in {\rm Sh}({\lambda})^{+} \}$ of elements in $L(\{x_{\ell_1},\ldots,x_{\ell_n}\})$. Note that $[x_{\sigma \cdot \ell}]=0$ if $\sigma^{-1}(n-1)\geq m_0+\cdots+m_{k-1}$. Our goal is to expand $P_{\lambda}(x)$ in ${\rm Sh}(\lambda)^{+}$.
\begin{align}
P_{\lambda}(x) &= \frac{1}{n}\frac{(p-1)!}{1!^{m_1}\cdots  p!^{m_p}} \sum_{\substack{(\ell_1,\ldots,\ell_n)\sim \lambda \\ \ell_1+\cdots+\ell_n=p}} C^{-1}_{\ell_1,\ldots,\ell_n}[x_{\ell_1,\ldots,\ell_n}]\\ 
&= \frac{1}{n}\frac{(p-1)!}{1!^{m_1}\cdots  p!^{m_p}} \sum_{\alpha \in {\rm Sh}(\lambda)} C^{-1}_{\alpha \cdot \lambda}[x_{\alpha \cdot \lambda}].
\end{align}
We use the splitting $\mathcal{S}_{n} \simeq {\rm Sh}(\lambda)\times {\rm Stab}(\lambda)$. Since $C^{-1}_{\alpha \cdot \lambda}[x_{\alpha \cdot \lambda}] = C^{-1}_{\lambda}[x_{\lambda}]$ for any $\alpha\in {\rm Stab}(\lambda)$, we infer
\begin{align}
P_{\lambda}(x) &=\frac{1}{|{\rm Stab}(\lambda)|n}\frac{(p-1)!}{1!^{m_1}\cdots  p!^{m_p}} \sum_{\alpha \in \mathcal{S}_n} C^{-1}_{\alpha\cdot\lambda} \,\,[x_{\alpha\cdot\lambda}].
\end{align}
By applying Proposition \ref{prop:trucnul}, we obtain
\begin{align}
P_{\lambda}(x)&= \frac{1}{|{\rm Stab}(\lambda)|n}\frac{(p-1)!}{1!^{m_1}\cdots  p!^{m_p}}\sum_{\alpha \in \mathcal{S}_n} C^{-1}_{\alpha \cdot \lambda }\sum_{\substack{n-1 \geq s\geq 1\\  
 \sigma\in\overline{\rm UnSh}(n-s,s)\\ n=\sigma \alpha(n) \\ \alpha(n)<n \\ (n-s+1)\neq \sigma(n)}} (-1)^{s}[x_{\sigma\alpha\cdot \lambda}] \\
 &\hspace{2cm} + \frac{1}{|{\rm Stab}(\lambda)|n}\frac{(p-1)!}{1!^{m_1}\cdots  p!^{m_p}} \sum_{\substack{\alpha \in \mathcal{S}_n \\ \alpha(n)=n}} C^{-1}_{\alpha\cdot\lambda} \,\,[x_{\alpha\cdot\lambda}]\\
 &= \frac{1}{|{\rm Stab}(\lambda)|n}\frac{(p-1)!}{1!^{m_1}\cdots  p!^{m_p}}\sum_{\alpha \in \mathcal{S}_n} C^{-1}_{\alpha \cdot \lambda }\sum_{\substack{n-1 \geq s\geq 1\\  
 \sigma\in\overline{\rm Sh}(n-s,s)\\ n=\sigma^{-1} \alpha(n)  \\ \alpha(n)<n \\ \sigma(n-s+1)\neq n}} (-1)^{s}\,\,[x_{\sigma^{-1}\alpha\cdot \lambda}] \\
  &\hspace{2cm} + \frac{1}{|{\rm Stab}(\lambda)|n}\frac{(p-1)!}{1!^{m_1}\cdots  p!^{m_p}} \sum_{\substack{\alpha \in \mathcal{S}_n \\ \alpha(n)=n}} C^{-1}_{\alpha\cdot\lambda} \,\,[x_{\alpha\cdot\lambda}]
  \end{align}
  We make the change of variables $(\sigma,\alpha) \mapsto (\sigma,\beta) = (\sigma, \sigma^{-1}\alpha)$ to obtain
  \begin{align}
  &P_{\lambda}(x)= \frac{1}{|{\rm Stab}(\lambda)|n}\frac{(p-1)!}{1!^{m_1}\cdots  p!^{m_p}}\sum_{s=1\ldots n-1}\sum_{\substack{\beta\in \mathcal{S}_n \\ \beta(n)=n}}\sum_{\substack{\sigma \in \overline{{\rm Sh}}(n-s,s)  \\ \sigma(n)<n \\ \sigma(n-s+1)\neq n}} (-1)^s C^{-1}_{ \sigma\beta \cdot \lambda } \,\, [x_{\beta\cdot \lambda}] \\
  &\hspace{2cm} + \frac{1}{|{\rm Stab}(\lambda)|n}\frac{(p-1)!}{1!^{m_1}\cdots  p!^{m_p}} \sum_{\substack{\alpha \in \mathcal{S}_n \\ \alpha(n)=n }} C^{-1}_{\alpha\cdot\lambda} \,\,[x_{\alpha\cdot\lambda}]
\end{align}
We use Lemma \ref{lem:friedrich} to infer
\begin{align}
  P_{\lambda}(x)&= \frac{1}{|{\rm Stab}(\lambda)|n}\frac{(p-1)!}{1!^{m_1}\cdots  p!^{m_p}}\sum_{s=1\ldots n-1}\sum_{\substack{\beta\in \mathcal{S}_n \\ \beta(n)=n}}\sum_{\substack{\sigma \in \overline{{\rm Sh}}(n-s,s) \\\sigma(n-s+1)=n}} (-1)^{s+1} C^{-1}_{ \sigma\beta \cdot \lambda } \,\, [x_{\beta\cdot \lambda}] \\
  &\hspace{2cm} + \frac{1}{|{\rm Stab}(\lambda)|n}\frac{(p-1)!}{1!^{m_1}\cdots  p!^{m_p}} \sum_{\substack{\alpha \in \mathcal{S}_n \\ \alpha(n)=n}} C^{-1}_{\alpha\cdot\lambda} \,\,[x_{\alpha\cdot\lambda}]
  \end{align}
  Set $K = \max\{ 1 \leq q \leq p \colon m_q \neq 0\}$. For any  permutation $\beta \in \mathcal{S}_n$ with $\beta(n)=n$, there exists a couple $(s,w)$ with $s(n) = w(n)=n$, $s\in {\rm Sh}(\lambda)$, $w \in {\rm Stab}(\lambda)$ and $sw=\beta$. Since $|\{ w  \in {\rm Stab}(\lambda) \colon w(n)=n\}| = |{\rm Stab}(\lambda)|/m_K$, we obtain:
  \begin{align}
  &P_{\lambda}(x)= \frac{1}{m_Kn}\frac{(p-1)!}{1!^{m_1}\cdots  p!^{m_p}}\sum_{\substack{\beta\in {\rm Sh}(\lambda)^{+}}}\sum_{s=1\ldots n-1}\sum_{\substack{\sigma \in \overline{{\rm Sh}}(n-s,s) \\\sigma(n-s+1)=n}} (-1)^{s+1} C^{-1}_{ \sigma\beta \cdot \lambda } \,\, [x_{\beta\cdot \lambda}] \label{eqn:lastone}\\
  &\hspace{2cm} + \frac{1}{m_K n}\frac{(p-1)!}{1!^{m_1}\cdots  p!^{m_p}} \sum_{\substack{\alpha \in {\rm Sh}(\lambda)^{+}}} C^{-1}_{\alpha\cdot\lambda} \,\,[x_{\alpha\cdot\lambda}].\label{eqn:lastoneone}
\end{align}
Finally, we have proved the following proposition:
\begin{proposition} With the notations introduced so far, let $\lambda$ be an integer partition of $p$ and $c \in \fg$
\begin{align}
&P_{\lambda}(x)= \frac{1}{m_K n}\frac{(p-1)!}{1!^{m_1}\cdots  p!^{m_p}}\sum_{\substack{\beta\in {\rm Sh}(\lambda)^{+}}}C^{-1}_{\beta\cdot\lambda}+\sum_{s=1\ldots n-1}\sum_{\substack{\sigma \in \overline{{\rm Sh}}(n-s,s) \\\sigma(n-s+1)=n}} (-1)^{s+1} C^{-1}_{ \sigma\beta \cdot \lambda } \quad [x_{\beta\cdot \lambda}].
\end{align}
\end{proposition}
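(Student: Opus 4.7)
The plan is to derive the closed form for $P_{\lambda}(x)$ by folding together three ingredients that have already been prepared in the preceding subsection: the definition \eqref{eqn:pl} of $P_{\lambda}(x)$ as a sum over tuples of type $\lambda$, the basis expansion for right-iterated brackets supplied by Proposition \ref{prop:trucnul}, and the combinatorial vanishing coming from Friedrich's criterion in Lemma \ref{lem:friedrich}. The target of the computation is to re-express $P_{\lambda}(x)$ on the family $\{[x_{\beta\cdot\lambda}]\colon \beta\in{\rm Sh}(\lambda)^{+}\}$, which spans the relevant multilinear slice of the free Lie algebra $L(\{x_{\ell_1},\ldots,x_{\ell_n}\})$.

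First, I would use the bijection $\mathcal{S}_n \simeq {\rm Sh}(\lambda)\times {\rm Stab}(\lambda)$ and the invariance of both the coefficient $C^{-1}_{\alpha\cdot\lambda}$ and the bracket $[x_{\alpha\cdot\lambda}]$ under relabelling by ${\rm Stab}(\lambda)$ to rewrite the sum in the definition of $P_\lambda(x)$ as
\[
P_{\lambda}(x) = \frac{1}{|{\rm Stab}(\lambda)|\,n}\frac{(p-1)!}{1!^{m_1}\cdots p!^{m_p}}\sum_{\alpha\in\mathcal{S}_n} C^{-1}_{\alpha\cdot\lambda}\,[x_{\alpha\cdot\lambda}].
\]
This collapses the parametrization by integer compositions to a simpler sum over the full symmetric group at the cost of an overall factor $1/|{\rm Stab}(\lambda)|$.

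Next, I would split the $\mathcal{S}_n$-sum according to whether $\alpha(n) = n$ or $\alpha(n) < n$. On the latter piece, I would invoke Proposition \ref{prop:trucnul} to expand each $[x_{\alpha\cdot\lambda}]$ as an alternating sum over $\overline{{\rm UnSh}}(n-s,s)$ with the conditions $\sigma\alpha(n)=n$ and $\sigma^{-1}(n-s+1)\neq n$. Transposing the permutation (replacing $\sigma$ by $\sigma^{-1}$) converts unshuffles into shuffles, and the change of variables $(\sigma,\alpha)\mapsto(\sigma,\beta)$ with $\beta=\sigma^{-1}\alpha$ turns the inner sum over $\alpha$ into a sum over $\beta\in\mathcal{S}_n$ with $\beta(n)=n$ decorated by a sum of $C^{-1}_{\sigma\beta\cdot\lambda}$ over shuffles.

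At this point I would apply Lemma \ref{lem:friedrich}: the condition $\sigma(n-s+1)\neq n$ can be eliminated at the cost of a sign flip, since summing $C^{-1}_{\sigma\beta\cdot\lambda}$ over all of $\overline{\rm Sh}(n-s,s)$ vanishes, so the part with $\sigma(n-s+1)\neq n$ is minus the part with $\sigma(n-s+1)=n$. Combining the two pieces yields the required inner bracket $C^{-1}_{\beta\cdot\lambda}+\sum_{s}(-1)^{s+1}\sum_{\sigma}C^{-1}_{\sigma\beta\cdot\lambda}$ attached to each $[x_{\beta\cdot\lambda}]$ with $\beta(n)=n$. Finally, the set $\{\beta\in\mathcal{S}_n\colon\beta(n)=n\}$ splits as ${\rm Sh}(\lambda)^{+}\cdot\{w\in{\rm Stab}(\lambda):w(n)=n\}$, whose second factor has cardinality $|{\rm Stab}(\lambda)|/m_K$, and this absorbs the factor $|{\rm Stab}(\lambda)|$ in the denominator to produce the announced $1/(m_K n)$, completing the proof.

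The main obstacle I expect is purely bookkeeping: every change of variables on the permutation group must preserve the shuffle and pinning constraints simultaneously, and one has to verify that after passing from a sum over $\mathcal{S}_n$ to a sum over ${\rm Sh}(\lambda)^{+}$ each Lie word $[x_{\beta\cdot\lambda}]$ remains nonzero (recall $[x_{\beta\cdot\lambda}]$ vanishes whenever $\beta^{-1}(n-1)$ lies in the last ${\rm Stab}(\lambda)$-block, which is precisely what the second defining condition of ${\rm Sh}(\lambda)^{+}$ excludes). Once this bookkeeping is carried out, the identity is a direct consequence of the Lie-algebra identities already established.
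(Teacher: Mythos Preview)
Your proposal is correct and follows exactly the same route as the paper: lift the sum to all of $\mathcal{S}_n$ via the splitting ${\rm Sh}(\lambda)\times{\rm Stab}(\lambda)$, expand the brackets with $\alpha(n)<n$ using Proposition~\ref{prop:trucnul}, change variables to $\beta=\sigma^{-1}\alpha$, apply Lemma~\ref{lem:friedrich} (in its $\overline{\rm Sh}$ form) to swap the constraint $\sigma(n-s+1)\neq n$ for $\sigma(n-s+1)=n$ with a sign, and finally descend to ${\rm Sh}(\lambda)^{+}$ using $|\{w\in{\rm Stab}(\lambda):w(n)=n\}|=|{\rm Stab}(\lambda)|/m_K$. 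The only cosmetic point is that the condition $\sigma(n)<n$ (inherited from $\alpha(n)<n$) is automatically absorbed by $\sigma(n-s+1)\neq n$ (for $s\geq 2$ it is automatic in $\overline{\rm Sh}(n-s,s)$, and for $s=1$ the two conditions coincide), which you leave implicit.
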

We arrive at our second main result, which is a corollary of the previous proposition. Given a tuple $\ell=(\ell_1,\ldots,\ell_n)$ of positive integers and an ordered partition $(S_1,\ldots,S_q)$, $S_r = \{s^{r}_{1} < \cdots < s^{r}_{k_r}\}$ of $[n]$, we denote by $\ell_{S_1\cdots S_q}$ the tuple
$$
\ell_{S_1\cdots S_q} : = (\ell_{s^1_{1}},\ldots,\ell_{s^1_{k_1}},\ldots,\ell_{s^q_{1}},\ldots,\ell_{s^q_{k_q}})
$$
We use the shorthand notation $\ell ! = \ell_1!\cdots \ell_n!$.
\begin{theorem} With the notations introduced before, the following formula holds, for any $x\in \fg$:
\label{thm:expansion}
\begin{align*}
x^{[p]_{\btr}} &= x^{[p]} + x^{\bullet_{\btr p}}+\sum_{n=2}^{p}\sum_{\substack{\ell=(\ell_1,\ldots,\ell_n) \\ \ell_n \geq 2 \\ \ell_n\geq \ell_i,~ \ell_n > \ell_{n-1} \\ \sum_i \ell_i = p}}\frac{1}{|\{j:\ell_j=\ell_n\}|n}\frac{(p-1)!} {\ell!} \Big(C^{-1}_{\ell} + \sum_{\substack{\emptyset \neq S\subsetneq [n] \\ n \in S }} (-1)^{|S|+1}C^{-1}_{\ell_{S^c}\overline{\ell_{S}}}\Big) \,\, [x_{\ell}]. \nonumber
\end{align*}
\end{theorem}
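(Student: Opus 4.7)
The plan is to deduce Theorem \ref{thm:expansion} directly from the explicit formula for $P_\lambda(x)$ established in the proposition immediately above, by grouping the contributions in the definition \eqref{eqn:pstruct} of $x^{[p]_{\btr}}$ according to the equivalence class (integer partition) of the tuple $(\ell_1, \ldots, \ell_n)$. First I would isolate the length-one contribution, coming from the single tuple $(p)$, which equals $x^{\bullet_{\btr} p}$, and observe that the class $\lambda = 1^p$ contributes zero since in that case $[x^{\bullet\,1} \cdots x^{\bullet\,1}] = [x,\ldots,x] = 0$. Together with the external summand $x^{[p]}$, this yields
$$ x^{[p]_{\btr}} = x^{[p]} + x^{\bullet_{\btr} p} + \sum_{\substack{\lambda \,\vdash\, p \\ \lambda \,\neq\, (p),\,\lambda \,\neq\, 1^p}} P_\lambda(x), $$
with $P_\lambda(x)$ as in \eqref{eqn:pl}.

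Next, I would substitute into each $P_\lambda(x)$ its expansion in the basis $\{[x_{\beta \cdot \lambda}] : \beta \in {\rm Sh}(\lambda)^{+}\}$ from the preceding proposition. The crucial combinatorial input is that the disjoint union $\bigsqcup_{\lambda} {\rm Sh}(\lambda)^{+}$ is in bijection with the set of ordered tuples $\ell = (\ell_1, \ldots, \ell_n)$, $2 \leq n \leq p$, satisfying $\sum_i \ell_i = p$, $\ell_n \geq 2$, $\ell_n \geq \ell_i$ for all $i$, and $\ell_n > \ell_{n-1}$: the partition $\lambda$ records the multiset of entries of $\ell$, and the two defining conditions $\beta(n)=n$ and $\beta^{-1}(n-1) \leq m_0 + \cdots + m_{K-1}$ encode respectively that the last entry $\ell_n$ equals the maximum $K = \max \lambda$ and that $\ell_{n-1}$ is strictly smaller than $\ell_n$. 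Under this bijection, the prefactor $\tfrac{1}{m_K n}\cdot \tfrac{(p-1)!}{1!^{m_1}\cdots p!^{m_p}}$ becomes $\tfrac{1}{|\{j : \ell_j = \ell_n\}|\cdot n}\cdot \tfrac{(p-1)!}{\ell!}$, matching exactly the coefficient prescribed by the theorem.

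The remaining step, which I expect to be the main technical obstacle, is to identify the internal alternating sum
$$ C^{-1}_{\beta \cdot \lambda} + \sum_{s=1}^{n-1}(-1)^{s+1}\sum_{\substack{\sigma \in \overline{\rm Sh}(n-s,s) \\ \sigma(n-s+1)=n}} C^{-1}_{\sigma\beta \cdot \lambda} $$
supplied by the proposition with the subset-indexed sum $C^{-1}_\ell + \sum_{\emptyset \neq S \subsetneq [n],\, n \in S} (-1)^{|S|+1} C^{-1}_{\ell_{S^c}\,\overline{\ell_S}}$ appearing in the theorem. The natural bijection sends a subset $S \ni n$ of size $s$ to the unique $\sigma \in \overline{\rm Sh}(n-s,s)$ that places the elements of $S^c$ into positions $1, \ldots, n-s$ in increasing order and the elements of $S$ into positions $n-s+1, \ldots, n$ in the reversed order inherited from $\{n < n-1 < \cdots < n-s+1\}$, with the requirement $n \in S$ translating into $\sigma(n-s+1) = n$. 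One then verifies that under this bijection $\sigma\beta \cdot \lambda$ coincides with the concatenation $\ell_{S^c}\,\overline{\ell_S}$; making this identification of tuples rigorous by tracking the increasing order on $S^c$ and the reversed order on $S$ is the only nontrivial calculation, after which the theorem is obtained by summation over $\lambda$.
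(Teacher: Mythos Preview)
Your plan is precisely what the paper does: the theorem is stated there only as ``a corollary of the previous proposition'' with no further argument, and your proposal spells out exactly the intended derivation --- isolate the $n=1$ and $\lambda=1^p$ contributions, sum the $P_\lambda(x)$ over the remaining partitions, reparametrize $\beta\in{\rm Sh}(\lambda)^{+}$ by tuples $\ell$ satisfying $\ell_n\geq 2$, $\ell_n\geq\ell_i$, $\ell_n>\ell_{n-1}$, and finally convert the inner sum over $\sigma\in\overline{\rm Sh}(n-s,s)$ with $\sigma(n-s+1)=n$ into the subset sum over $S\ni n$.

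One caution on the last step: your description of the bijection $S\leftrightarrow\sigma$ is stated in terms of ``placing elements of $S^c$ into positions $1,\ldots,n-s$,'' which suggests $\sigma^{-1}(\{1,\ldots,n-s\})=S^c$; but the paper's condition is on $\sigma$ directly (increasing on $\{1,\ldots,n-s\}$, decreasing on $\{n-s+1,\ldots,n\}$, with $\sigma(n-s+1)=n$), and with the paper's action convention $(\sigma\cdot\ell)_i=\ell_{\sigma^{-1}(i)}$ the natural bijection takes $S$ to be the \emph{image} $\sigma(\{n-s+1,\ldots,n\})$. Tracking the tuple $\sigma\beta\cdot\lambda$ carefully against $\ell_{S^c}\overline{\ell_S}$ requires attention to which of $\sigma$ or $\sigma^{-1}$ carries the shuffle constraint, and the paper's notation is not entirely consistent on this point. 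Your instinct that this verification is ``the only nontrivial calculation'' is right; the rest of the argument is exactly as you describe.
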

\begin{example}
$\lambda = 01$ with $p=3$, we get that \eqref{eqn:lastone} is equal to $1$ which sum up to $2=-1$ when adding the contribution \eqref{eqn:lastoneone}.
\end{example}

\section{(Trivially) restricted Post-Lie algebras and trivially restricted $\mathcal{O}$-operators}
In this section, we introduce the notions of \textit{trivially restricted} and \textit{restricted} post-Lie algebras.
\subsection{Trivially restricted post-Lie algebras} 
\begin{definition}\label{def:triviallyrestricted} A post-Lie algebra $\bigl(\fg, [-,-], \btr\bigl)$ over a restricted Lie algebra $\bigl(\fg, [-,-], (-)^{[p]}\bigl)$ is called a \emph{trivially restricted post-Lie algebra} if
\begin{equation}
\label{eqn:trrstcaxone}
x^{[p]_{\btr}} \btr y= (p \text{ times})\, x \btr ( \cdots (x \btr (x \btr y))\cdots ), \quad \forall x,y \in \fg
\end{equation}
and $x\mapsto y \btr x$ is a restricted derivation for all $y\in \fg$, that is,
\begin{align}
\label{eqn:restrictedderoriginalpmap}
y \btr x^{[p]} = (p-1~{\rm times} )\,\,[x, [x, \ldots[x, y \btr x]\cdots],\quad \forall x,y \in \fg.
\end{align}
where the map $(-)^{[p]_{\btr}} $ in Equation \eqref{eqn:trrstcaxone}, is defined by Equation \eqref{eqn:pstruct}.
\end{definition}
\begin{proposition}
\label{prop:fromtriviallyrestrictedtolierestricted}
Any trivially restricted Post-Lie algebra yields a sub-adjacent restricted Lie algebra $(\fg, \llbracket-,-\rrbracket, (-)^{[p]_{\btr}})$. In addition, $\btr$ yields a restricted representation of the restricted sub-adjacent Lie algebra $(\fg,\llbracket-,-\rrbracket, (-)^{[p]_{\btr}})$.
\end{proposition}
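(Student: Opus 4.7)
The proof breaks into verifying the three Jacobson axioms for $(-)^{[p]_{\btr}}$ relative to the sub-adjacent bracket $\llbracket-,-\rrbracket$, followed by the restricted-module axiom for $\btr$. The homogeneity $(\lambda x)^{[p]_{\btr}} = \lambda^p x^{[p]_{\btr}}$ is immediate since every summand in formula \eqref{eqn:pstruct} is of total degree $p$ in $x$, and $(-)^{[p]}$ is $p$-linear on $\fg$ by restrictedness. The additive axiom $(x+y)^{[p]_{\btr}} = x^{[p]_{\btr}} + y^{[p]_{\btr}} + \sum_{i=1}^{p-1}s_i(x,y)$ (with the $s_i$ now formed from $\llbracket-,-\rrbracket$) is precisely the content of Theorem \ref{prop:jacobsonidentitties}(2), whose hypothesis is built into the definition of a trivially restricted post-Lie algebra via \eqref{eqn:restrictedderoriginalpmap}.

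The adjoint axiom $\ad^{\llbracket-,-\rrbracket}_{x^{[p]_{\btr}}} = (\ad^{\llbracket-,-\rrbracket}_x)^p$ is the main new content. I plan to work inside the associative algebra $(\mathfrak{u}(\fg), \star_{\btr})$, which by Section \ref{sec:postLie} is isomorphic as a Hopf algebra to $\mathfrak{u}(\fg, \llbracket-,-\rrbracket)$; in particular, the $\star_{\btr}$-commutator of two elements of $\fg$ is exactly $\llbracket-,-\rrbracket$. The classical associative identity $(\ad_x^{\star_{\btr}})^p = \ad^{\star_{\btr}}_{x^{\star_{\btr}p}}$ in characteristic $p$ then reduces the problem to proving $\ad^{\star_{\btr}}_{x^{\star_{\btr}p} - x^{[p]_{\btr}}}(y) = 0$ for every $y \in \fg$. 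Using $x^{\star_{\btr}p} - x^{[p]_{\btr}} = x^{p} - x^{[p]}$ from the proof of Theorem \ref{prop:jacobsonidentitties}, and the fact that both $x^p$ and $x^{[p]}$ are primitive in the standard Hopf structure on $\mathfrak{u}(\fg)$, the $\star_{\btr}$-commutator with $y$ expands via $E\star_{\btr}F = E_{(1)}(E_{(2)}\btr F)$ into three comparisons. The bracket term matches because $\ad_x^p = \ad_{x^{[p]}}$ in $\fg$ by restrictedness of $(\fg,[-,-],(-)^{[p]})$. The term $y\btr(-)$ matches because $y\btr x^p = \ad_x^{p-1}(y\btr x) = y\btr x^{[p]}$, the left equality following from primitivity of $y$ together with $\binom{p-1}{k}\equiv(-1)^k\pmod{p}$, and the right equality being \eqref{eqn:restrictedderoriginalpmap}. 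The third, most delicate, comparison is $x^p \btr y = x^{[p]} \btr y$: the compatibility $(E\star_{\btr}F)\btr G = E\btr(F\btr G)$ of the Guin--Oudom extension from \cite{OG,EFLM} yields $x^{\star_{\btr}p}\btr y = x\btr(x\btr(\cdots(x\btr y)\cdots))$, and combining this with hypothesis \eqref{eqn:trrstcaxone} for $x^{[p]_{\btr}}\btr y$ and the decomposition $x^{\star_{\btr}p} = x^{[p]_{\btr}} + x^p - x^{[p]}$ forces the desired equality.

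For the second half of the statement, the ordinary representation property of $\btr$ on $(\fg, \llbracket-,-\rrbracket)$ is a direct consequence of Definition \ref{def:postLie} (as already noted after that definition), while the restricted condition $x^{[p]_{\btr}}\btr y = x\btr(x\btr(\cdots(x\btr y)\cdots))$ is exactly axiom \eqref{eqn:trrstcaxone}. The principal obstacle throughout is the careful bookkeeping between the two $p$-th powers in $\mathfrak{u}(\fg)$ --- the concatenation power $x^p$ and the deformed power $x^{\star_{\btr}p}$ --- together with the unsymmetric multiplicativity rules governing the Guin--Oudom extension of $\btr$; once that bookkeeping is executed, all four axioms drop out.
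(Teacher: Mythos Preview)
Your proposal is correct and follows essentially the same route as the paper: both arguments work inside $(\mathfrak{u}(\fg),\star_{\btr})$, use the identity $x^{\star_{\btr}p}-x^{[p]_{\btr}}=x^{p}-x^{[p]}$, and verify that the $\star_{\btr}$-commutator of this element with any $y\in\fg$ vanishes by splitting it into the bracket term (centrality of $x^{p}-x^{[p]}$), the $y\btr(-)$ term (restricted-derivation axiom \eqref{eqn:restrictedderoriginalpmap}), and the $(-)\btr y$ term (axiom \eqref{eqn:trrstcaxone} combined with $(E\star_{\btr}F)\btr G=E\btr(F\btr G)$). Your write-up is in fact more explicit than the paper's about the Guin--Oudom identity and the $\binom{p-1}{k}\equiv(-1)^k$ step, but the logical skeleton is identical.
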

\begin{proof}
With the notations of Section \ref{sec:jacobson}, for any $x,y \in \fg$, we have that $(\fg,\llbracket-,-\rrbracket, (-)^{[p]_{\btr}})$ is trivially restricted if and only if for any $x,y \in \fg$
$$
x^{\star_{\btr}p} \btr y = x^{[p]_{\btr}} \btr y \quad (\Leftrightarrow (x^{p}-x^{[p]}) \btr y =0).
$$
Hence, for any $x,y \in \fg$, we have
$$
(p~{\rm times} )\llbracket x, \llbracket x, \ldots,\llbracket x,y \rrbracket \cdots \rrbracket = \llbracket x^{\star_{\btr} p},y \rrbracket = [x^{\star_{\btr} p},y] + x^{[p]_{\btr}}\btr y - y \btr x^{\star_{\btr} p}.
$$
Since $x^{\star_{\btr}p} - x^{[p]_{\btr}} = x^{p} - x^{[p]}$ is a central element in $\mathfrak{u}(\fg)$, we have $[x^{\star_{\btr}p} - x^{[p]_{\btr}},y] = 0$.
From Equation \eqref{eqn:restrictedderoriginalpmap}, we also get 
$$
y \btr (x^{\star_{\btr p}} - x^{[p]_{\btr}})= y \btr (x^{p}-x^{[p]}) = \ad_{x}^{p-1}(x\btr y) - y \btr x^{[p]} = 0,\quad \forall x,y\in \fg.
$$
Hence, 
$(p~{\rm times} )\llbracket x, \llbracket x, \ldots,\llbracket x, - \rrbracket \cdots \rrbracket$
is an inner derivation. Proposition \ref{prop:jacobsonidentitties} ends the proof.
\end{proof}
\begin{proposition} Let $(\fg, \btr, [-,-], (-)^{[p]})$ be a trivially restricted post-Lie algebra. Then, the operation $\btr\colon \mathfrak{u}(\fg)\otimes \mathfrak{u}(\fg) \to \mathfrak{u}(\fg)$ descends to a bilinear operation on the restricted envelope $\mathfrak{u}_p(\fg)$.
\end{proposition}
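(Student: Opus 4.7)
The plan is to show that the associative ideal $I = \langle x^p - x^{[p]} : x \in \fg\rangle \subset \mathfrak{u}(\fg)$ is stable under $\btr$ in both slots, which is exactly the condition for $\btr$ to descend to $\mathfrak{u}_p(\fg)$. Writing $Z = \{x^p - x^{[p]} : x\in \fg\}$, my starting observation is that every $z\in Z$ is central in $\mathfrak{u}(\fg)$ (by Jacobson's $\ad_x^p = \ad_{x^{[p]}}$) and primitive (the Frobenius commutes with the coproduct in characteristic $p$), so $I = \mathfrak{u}(\fg)\cdot Z = Z\cdot\mathfrak{u}(\fg)$ and $z\btr(-)$ is a derivation of $(\mathfrak{u}(\fg),\cdot)$.

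The heart of the proof is two vanishing lemmas. First, $z\btr F = 0$ for every $z\in Z$ and $F\in\mathfrak{u}(\fg)$: both $x^{\star_{\btr}p}\btr(-)$ and $x^{[p]_{\btr}}\btr(-)$ are derivations of $\mathfrak{u}(\fg)$ (the former because $x\btr(-)$ is a derivation and the relation $(E_1\star_{\btr}E_2)\btr F = E_1\btr(E_2\btr F)$ makes $x^{\star_{\btr}p}\btr(-)$ equal to its $p$-th iterate, itself a derivation in characteristic $p$; the latter because $x^{[p]_{\btr}}\in\fg$ is primitive); by the trivially restricted axiom \eqref{eqn:trrstcaxone} they agree on $\fg$, hence on all of $\mathfrak{u}(\fg)$, and their difference of arguments equals $x^p-x^{[p]}=z$, as observed in the proof of Proposition \ref{prop:jacobsonidentitties}. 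Second, $E\btr z = \varepsilon(E)\,z$ for every $E\in\mathfrak{u}(\fg)$: the base case $y\btr z = 0$ for $y\in\fg$ combines the restricted derivation axiom \eqref{eqn:restrictedderoriginalpmap}, giving $y\btr x^{[p]} = \ad_x^{p-1}(y\btr x)$, with the characteristic-$p$ identity $\sum_{k=0}^{p-1} x^k\,a\,x^{p-1-k} = \ad_x^{p-1}(a)$ applied to $a = y\btr x$ (to compute $y \btr x^p$ via the derivation property of $y\btr(-)$). The general case follows from a short PBW-length induction using the left-slot recursion $(yE')\btr z = y\btr(E'\btr z) - (y\btr E')\btr z$.

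With these two lemmas in hand, right-slot closure of $I$ is immediate from the Hopf extension rule: for $F = azb\in I$,
\begin{equation*}
E\btr(azb) = \sum (E_{(1)}\btr a)\,(E_{(2)}\btr z)\,(E_{(3)}\btr b) = \sum (E_{(1)}\btr a)\, z\, (E_{(2)}\btr b) \in I,
\end{equation*}
using the second lemma to collapse the middle factor. For left-slot closure, I use $I = \mathfrak{u}(\fg)\,Z$ and prove by induction on the PBW length $|a|$ that $(az)\btr F = 0$ for all $a$, $z$, $F$. The base $a=1$ is the first lemma; writing $a = y a'$ with $y\in\fg$, the inductive step
\begin{equation*}
(y a'\cdot z)\btr F = y\btr\bigl((a'z)\btr F\bigr) - \bigl((y\btr a')z\bigr)\btr F
\end{equation*}
vanishes by applying the induction hypothesis to $a'$ and to the length-$(|a|-1)$ components of $y\btr a'$, using $y\btr(a'z) = (y\btr a')z$ (because $y\btr z = 0$).

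The main obstacle I anticipate is the asymmetry between the two slots: the Hopf extension formula cleanly controls the right slot, but the defining recursion on the left does not respect the associative product of $\mathfrak{u}(\fg)$, so one cannot simply invoke that $I$ is a two-sided associative ideal. The centrality of $Z$, which reduces $I$ to the one-sided form $\mathfrak{u}(\fg)\,Z$, together with the identity $y\btr z = 0$, are the key inputs that let the length induction close both summands simultaneously.
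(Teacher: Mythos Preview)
Your proof is correct and follows essentially the same route as the paper's: both establish the two key vanishings $z\btr F = 0$ and $E\btr z = \varepsilon(E)\,z$ (the paper phrases the first via the equivalence $(x^p-x^{[p]})\btr y = 0 \Leftrightarrow$ axiom \eqref{eqn:trrstcaxone}, proved in Proposition \ref{prop:fromtriviallyrestrictedtolierestricted}), then close the right slot with the Hopf extension rule and the left slot with the same PBW-length induction on the recursion $(yE)\btr F = y\btr(E\btr F) - (y\btr E)\btr F$. Your organization into two lemmas and the explicit invocation of $(E_1\star_\btr E_2)\btr F = E_1\btr(E_2\btr F)$ and the characteristic-$p$ identity $\sum_k x^k a\,x^{p-1-k} = \ad_x^{p-1}(a)$ make the argument a bit cleaner than the paper's somewhat interleaved version, but the substance is the same.
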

\begin{proof} 
We have to show that whenever $E\in I$, then for all $X\in \mathfrak{u}(\fg)$, $E \btr X,\, X\btr E \in I$. We have
$$
(x^{p} - x^{[p]}) \btr x_2\cdots x_n = \sum_{i=1}^{n}  x_2\cdots x_{i-1}((x^{p} - x^{[p]}) \btr x_i) x_{i+1}\cdots x_n = 0 \in I
$$
by definition of a restricted post-Lie algebra. Then,
\begin{align}
(x^{p} - x^{[p]})E\btr x&= (x^{p} - x^{[p]}) \btr (E\btr x) - ((x^{p} - x^{[p]}) \btr E) \btr x \\
&=0.
\end{align}
This readily implies, 
\begin{align*}
&(x^{p} - x^{[p]})E\btr x_2\cdots x_n \\
&\hspace{0.25cm}= \sum_{i=1}^n (E_{(1)}\btr x_2)\cdots ((E_{(i-1)}\btr x_{i-1}))((x^{p} - x^{[p]})E_{(i)} \btr x_i) (E_{(i+1)}\btr x_{i+1})\cdots (E_{(n)} \btr x_n) \\
&\hspace{0.25cm}=0.
\end{align*}
Moreover,
\begin{align}
&(y_1\cdots y_m)(x^{p} - x^{[p]})\btr x_2 \\ 
&= y_1 \btr ((y_2\cdots y_m)(x^{p} - x^{[p]})\btr x_2) - (y_1 \btr ((y_2\cdots y_m)(x^{p} - x^{[p]})))\btr x_2.
\end{align}
Since $y\btr (-)$ is a restricted derivation, we get, 
\begin{align}
&(y_1\cdots y_m)(x^{p} - x^{[p]})\btr x_2 = y_1 \btr ((y_2\cdots y_m)(x^{p} - x^{[p]})\btr x_2). 
\end{align}
From the induction hypothesis, we get 
$$
(y_1\cdots y_m)(x^{p} - x^{[p]})\btr x_2 = 0.
$$
Furthermore, 
$$
X\btr E_1(x^{p} - x^{[p]})E_2 = (X_{(1)}\btr E_1)(X_{(2)}\btr (x^{p} - x^{[p]}) )(X_{(3)}\btr E_2),
$$
thus $X\btr E_1(x^{p} - x^{[p]})E_2$ belongs to $I$ if $X_{(2)}\btr (x^{p} - x^{[p]})$ belongs to $I$. We can prove recursively on $n\geq 1$ that $x_2 \cdots x_n \btr(x^{p} - x^{[p]}) $ belongs to $I$. For $n=1$, since $x_2 \btr (-)$ is a restricted derivation, we get that

$$ 
x_2 \btr x^{p} = ad^{p-1}_x(x_2\btr x) = x_2 \btr x^{[p]}.
$$
Let us suppose that the assertion holds for $n \leq N$: 
$$
x_2\cdots x_n \btr (x^p-x^{[p]}) = 0, \quad  n\leq N.
$$
$$
x_1x_2\cdots x_{N} \btr (x^{p} - x^{[p]})= x_1 \btr (x_2\cdots x_{N} \btr (x^{p}-x^{[p]})) - (x_1 \btr( x_2\cdots x_{N})) \btr (x^p-x^{[p]})
= 0,
$$
where we have used the induction hypothesis to write the last equality.
\end{proof}
\begin{corollary} Consider the following associative product on $\mathfrak{u}_p(\fg)$ : 
$$
E \star_{\btr} F = E_{(1)}(E_{(2)} \btr F),\quad E,F \in \mathfrak{u}_p(\fg).
$$
Then, the map
$$
\mathfrak{u}_p(\fg,\llbracket -,-\rrbracket) \to \mathfrak{u}_p(\fg),\quad x_2\cdots x_n \mapsto x_2 \star_{\btr} \cdots \star_{\btr} x_n
$$
yields an algebra isomorphism between the restricted envelope of the sub-adjacent Lie bracket of the restricted post-Lie algebra $(\fg,\btr, (-)^{[p]_{\btr}} [-,-],(-)^{[p]})$ to the algebra $(\mathfrak{u}_p(\fg), \star_{\btr})$.
\end{corollary}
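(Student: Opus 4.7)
The strategy is to descend the Hopf algebra isomorphism $\phi \colon \mathfrak{u}(\fg,\llbracket -,-\rrbracket) \to (\mathfrak{u}(\fg), \star_\btr)$ recalled in Section \ref{sec:postLie}, which sends the iterated concatenation $x_{2} \cdots x_{n}$ to $x_{2} \star_\btr \cdots \star_\btr x_{n}$, to the quotients defining the two restricted envelopes. The previous proposition already guarantees that $\star_\btr$ descends to $\mathfrak{u}_{p}(\fg) = \mathfrak{u}(\fg)/I$, where $I$ is the concatenation ideal generated by $\{x^{p} - x^{[p]} : x \in \fg\}$, so that the target $(\mathfrak{u}_p(\fg), \star_\btr)$ is a bona fide associative algebra.

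The crucial observation is that $\phi$ carries the ideal $J$ defining $\mathfrak{u}_p(\fg,\llbracket -,-\rrbracket)$ into $I$. Writing $x^{p'}$ for the $p$-fold concatenation of $x$ in $\mathfrak{u}(\fg,\llbracket -,-\rrbracket)$, one has $\phi(x^{p'}) = x^{\star_\btr p}$, and since $\phi$ restricts to the identity on $\fg$, $\phi(x^{[p]_\btr}) = x^{[p]_\btr}$. Hence
$$
\phi\bigl(x^{p'} - x^{[p]_\btr}\bigr) = x^{\star_\btr p} - x^{[p]_\btr} = x^{p} - x^{[p]},
$$
the last equality being precisely the identity exhibited in the proof of Proposition \ref{prop:jacobsonidentitties} (a direct consequence of Theorem \ref{thm:main} combined with the defining formula \eqref{eqn:pstruct} for $(-)^{[p]_\btr}$). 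The previous proposition moreover tells us that $I$ is a two-sided $\star_\btr$-ideal of $(\mathfrak{u}(\fg), \star_\btr)$, so the $\star_\btr$-ideal generated by $\{x^p - x^{[p]}\}$ is automatically contained in $I$. Thus $\phi(J) \subseteq I$, and we obtain a well-defined algebra morphism
$$
\Phi \colon \mathfrak{u}_p(\fg,\llbracket -,-\rrbracket) \to (\mathfrak{u}_p(\fg), \star_\btr),\quad x_2 \cdots x_n + J \mapsto x_2 \star_\btr \cdots \star_\btr x_n + I,
$$
which is surjective because $\phi$ is.

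The main obstacle is injectivity, equivalently the reverse containment $\phi^{-1}(I) \subseteq J$. The cleanest route is via the restricted PBW theorem: after fixing a totally ordered basis $(e_i)$ of $\fg$, the ordered monomials $\{e_{i_1}^{a_1} \cdots e_{i_r}^{a_r} : i_1 < \cdots < i_r,\, 0 \leq a_s \leq p-1\}$ furnish a basis of both $\mathfrak{u}_p(\fg)$ and $\mathfrak{u}_p(\fg,\llbracket -,-\rrbracket)$. In the finite-dimensional case the two spaces therefore have the same dimension $p^{\dim \fg}$, and the surjection $\Phi$ is forced to be an isomorphism. In general one filters both envelopes by PBW length and verifies that $\Phi$ preserves this filtration, since for $x,y \in \fg$ one has $x \star_\btr y - xy = x \btr y \in \fg$, which is of lower PBW degree than the product; the surjection then becomes a surjection of graded vector spaces with matching dimensions on each piece (again by the restricted PBW basis), hence an isomorphism on the associated graded, and consequently on $\Phi$ itself.
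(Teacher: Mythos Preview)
Your argument is correct and is exactly the natural way to deduce the corollary from the preceding material; the paper itself states the result without proof, so you are supplying the details it leaves implicit. Two small remarks. First, when you invoke ``the previous proposition tells us that $I$ is a two-sided $\star_\btr$-ideal'', strictly speaking that proposition only shows $\btr$ descends; to conclude for $\star_\btr$ you also need that $I$ is a Hopf ideal (which holds because each generator $x^p - x^{[p]}$ is primitive) together with $I$ being a concatenation ideal, and it is worth saying so explicitly. Second, your PBW/filtration argument for injectivity works, but there is a shorter route: the proof of the preceding proposition actually shows $E \btr (x^p - x^{[p]}) = 0$ and $(x^p - x^{[p]}) \btr E = 0$ for all $E \in \mathfrak{u}(\fg)$, and since $x^p - x^{[p]}$ is primitive and central one gets
\[
E \star_\btr (x^p - x^{[p]}) = E\,(x^p - x^{[p]}), \qquad (x^p - x^{[p]}) \star_\btr E = (x^p - x^{[p]})\,E,
\]
so the $\star_\btr$-ideal and the concatenation ideal generated by $\{x^p - x^{[p]}\}$ coincide. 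Hence $\phi(J) = I$ exactly, and $\Phi$ is an isomorphism without appealing to restricted PBW.
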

\subsection{Trivially restricted $\mo$-operators}
\begin{definition}
Let $(\fg,\btr,\fa)$ be a restricted Lie module over $k$ and let ${\theta}\colon \fg \to \fa$ be an ordinary $\mathcal{O}$-operator. We will say that ${\theta}$ is \emph{trivially restricted} if for any $x \in \fg$, we have
\begin{align}
{\theta}(x^{[p]_{\btr_{\theta}}}) = \theta(x)^{[p]},
\end{align} where $(-)^{[p]_{\btr_{\theta}}}$ is given by Equation \eqref{eqn:pstruct} for the post-Lie product given by Equation \eqref{eq:postlie-from-oop}.
\end{definition}

\begin{proposition}\label{prop:restricted-o-op}
Let $(\fg,\btr,\fa)$ be a restricted Lie module over $k$ and let $\theta\colon \fg \to \fa$ be a trivially restricted $\mathcal{O}$-operator. Then, the post-Lie algebra $(\fg,[-,-],\btr_{\theta})$ is trivially restricted, where $x\btr_{\btr_{\theta}} y:={\theta}(x)\btr y,\quad\forall x,y\in \fg$.
\end{proposition}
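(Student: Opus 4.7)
The plan is to verify directly the two axioms of Definition \ref{def:triviallyrestricted} for the post-Lie product $\btr_{\theta}$ on the restricted Lie algebra $(\fg,[-,-],(-)^{[p]})$. The argument is essentially a transport, along $\theta$, of the two restricted Lie module axioms imposed on $(\fg,\btr,\fa)$, namely $a\btr x^{[p]} = \ad_{x}^{p-1}(a\btr x)$ and $a^{[p]}\btr x = a\btr(a\btr \cdots(a\btr x)\cdots)$ ($p$ iterations), for $a\in \fa$, $x\in \fg$. The trivially restricted condition $\theta(x^{[p]_{\btr_{\theta}}}) = \theta(x)^{[p]}$ is precisely what is needed to reconcile the $p$-structure on $\fg$ produced by \eqref{eqn:pstruct} with the $p$-structure on $\fa$ under $\theta$.

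For Equation \eqref{eqn:trrstcaxone}, I would compute
\[
x^{[p]_{\btr_{\theta}}}\btr_{\theta} y \;=\; \theta\bigl(x^{[p]_{\btr_{\theta}}}\bigr)\btr y \;=\; \theta(x)^{[p]}\btr y \;=\; \theta(x)\btr\bigl(\theta(x)\btr\cdots\bigl(\theta(x)\btr y\bigr)\cdots\bigr),
\]
where the first equality is the definition of $\btr_{\theta}$ from \eqref{eq:postlie-from-oop}, the second is the trivially restricted $\mathcal{O}$-operator hypothesis, and the third is the restricted Lie module axiom in $\fa$ applied to $a = \theta(x)$. Rewriting each $\theta(x)\btr(-)$ on the right-hand side as $x\btr_{\theta}(-)$ gives exactly the $p$-fold iterate of $x\btr_{\theta}(-)$ applied to $y$, which is \eqref{eqn:trrstcaxone}.

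For \eqref{eqn:restrictedderoriginalpmap}, fix $x,y\in \fg$ and chain
\[
y\btr_{\theta} x^{[p]} \;=\; \theta(y)\btr x^{[p]} \;=\; \ad_{x}^{p-1}\bigl(\theta(y)\btr x\bigr) \;=\; \ad_{x}^{p-1}\bigl(y\btr_{\theta} x\bigr),
\]
the middle equality being the other restricted Lie module axiom applied to $a = \theta(y)$. Combined with the observation that $x\mapsto \theta(y)\btr x$ is already a derivation of $[-,-]$ by the Lie module axioms, this says that $x\mapsto y\btr_{\theta} x$ is a restricted derivation of $(\fg,[-,-],(-)^{[p]})$, as required.

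No real obstacle is anticipated: the result is essentially a bookkeeping exercise. The only point worth noting is that the complicated sum defining $x^{[p]_{\btr_{\theta}}}$ in \eqref{eqn:pstruct} need not be inspected in detail, because the trivially restricted hypothesis lets us replace $\theta(x^{[p]_{\btr_{\theta}}})$ by $\theta(x)^{[p]}$ in one stroke and reduce everything to the restricted structure on $\fa$ together with the restricted Lie module axioms.
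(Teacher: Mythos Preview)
Your proposal is correct and follows essentially the same approach as the paper: both verify \eqref{eqn:trrstcaxone} by the chain $x^{[p]_{\btr_{\theta}}}\btr_{\theta} y=\theta(x^{[p]_{\btr_{\theta}}})\btr y=\theta(x)^{[p]}\btr y=\theta(x)\btr(\cdots(\theta(x)\btr y)\cdots)$ and then rewrite, and both verify \eqref{eqn:restrictedderoriginalpmap} via $y\btr_{\theta}x^{[p]}=\theta(y)\btr x^{[p]}=\ad_x^{p-1}(\theta(y)\btr x)=\ad_x^{p-1}(y\btr_{\theta}x)$. Your additional commentary on why the explicit form of \eqref{eqn:pstruct} is never needed is a nice clarification but does not change the argument.
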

\begin{proof}
Let $x,y\in\fg$. We have
\begin{align*}
x^{[p]_{\theta}}\btr_{\theta} y={\theta}(x^{[p]_{\theta}})\btr y={\theta}(x)^{[p]}\btr y&={\theta}(x)\btr\bigl(\cdots\btr ({\theta}(x)\btr y)\cdots\bigl)\\&=x\btr_{\theta}\bigl(\cdots\btr_{\theta} (x\btr y)\cdots\bigl).
\end{align*} Moreover, we have
\begin{align*}
 y\btr_{\theta} x^{[p]}={\theta}(y)\btr x^{[p]}=\ad_x^{p-1}\bigl({\theta}(y)\btr x\bigl)=\ad_x^{p-1}(y\btr_{\theta} x).
\end{align*}
\end{proof}

\begin{example} We continue Example \ref{sec:braces} and prove that
$
(\mathbb{C}[\mathcal{P}],\btr_{\rho}, (-)^{p})
$ is a trivially restricted post-Lie algebra, where $(-)^{p}$ is the $p^{th}$ power respectively to the product $m$.
We first check \eqref{eqn:trrstcaxone}, that is, 
$$
{\bm \rho}(x^{\cdot p} - x^{p}) = 0, \quad \forall x\in\fg.
$$
An explicit formula for $\bm{\rho}\colon \mathfrak{u}(\mathbb{C}[\mathcal{P}])\to\mathfrak{u}(\mathbb{C}[\mathcal{P}]_0)$ is given by
$$
{\bm \rho}(x_2\cdots x_n) = \sum_{(B_1,\ldots,B_k) \in {\rm Part}(n)} \rho_{}(x_{B_1})\cdots \rho_{}(x_{B_k}),
$$
where $\rho(x_{B})=Ix_{b_1}\cdots x_{b_s}$ if $B_1 = \{b_1 < \cdots < b_s\}$
and ${\rm Part}(n)$ is the set of ordered partitions of $[n]$. Since 
$$
{\bm \rho}(x^{\cdot p}) = \sum_{k=1}^{p}\sum_{q_1+\ldots+q_k=p, q_j\geq 1}\binom{p}{q_1,\ldots,q_k}\sum_{\sigma \in S_k} \rho(x^{ q_{\sigma^{-1}(1)}})\cdots \rho(x^{ q_{\sigma^{-1}(k)}}) = \rho(x^p),
$$
we get that $\rho$ is trivially restricted and thus from Proposition \ref{prop:restricted-o-op} that $(\mathbb{C}[\mathcal{P}],\btr_{\rho}, [-,-])$ is trivially restricted post-Lie algebra and ${\bm \rho}$ is a restricted morphism.
\end{example}

\subsection{Restricted post-Lie algebras}
\begin{definition}
\label{def:restrictedpostlie}
We say that a tuple $(\fg, \btr ,  (-)^{[p]_{\btr}} ,  [-,-], (-)^{[p]})$ is a \emph{restricted post-Lie algebra} if 
\begin{enumerate}

\item \label{item:rstpostlieone}$(\fg, \btr, [-,-])$ is a post-Lie algebra;

\item \label{item:rstpostlietwo}$(\fg, [-,-], (-)^{[p]})$ is a restricted Lie algebra;

\item \label{item:rstpostliezero}$(-)^{[p]_{\btr}}$ is $p$-linear and satisfies the Jacobson identities \eqref{def:restrictedLie} with respect to $\llbracket-,-\rrbracket$,

\item \label{item:rstpostliefive} $
[x^{[p]_{\btr}},y] + x^{[p]_{\btr}}\btr y  =  y \btr x^{[p]_{\btr}} + (p~{\rm times} )\llbracket x, \llbracket x, \ldots,\llbracket x,y \rrbracket \cdots \rrbracket, \quad \forall x,y \in \fg
$,
\item \label{item:rstpostliethree} for any $x\in\fg$, $x\btr$ is a restricted derivation on  $([-,-], (-)^{[p]})$;

\item \label{item:rstpostliefour}$ x^{[p]_{\btr}} \btr y= x \btr ( x \btr ( \cdots (x \btr y)\cdots ), \quad \forall x,y \in \fg $;

\end{enumerate}
where $\llbracket-,-\rrbracket$ refers to the sub-adjacent Lie bracket, see Equation \eqref{eq:subadjacent-bracket}.
\end{definition}
\begin{remark} In Definition \ref{def:restrictedpostlie}, $(-)^{[p]_{\btr}}$ is not assumed to satisfy Equation \eqref{eqn:pstruct}. We can replace items \ref{item:rstpostliezero} and \ref{item:rstpostliefive} by $(\mathfrak{g}, \llbracket -,- \rrbracket, x^{[p]_{\btr}})$ is restricted.
\end{remark}

\begin{proposition}
A trivially restricted post-Lie algebra $(\fg,\btr, [-,-],(-)^{[p]})$ is a restricted post-Lie algebra in the sense of Definition \ref{def:restrictedpostlie}.
\end{proposition}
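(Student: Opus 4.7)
The plan is to verify the six items of Definition~\ref{def:restrictedpostlie} one by one, most of which are immediate from the hypothesis or have already been established in the preceding propositions. Items~\ref{item:rstpostlieone} and~\ref{item:rstpostlietwo} are built into the data of a trivially restricted post-Lie algebra, and items~\ref{item:rstpostliethree} and~\ref{item:rstpostliefour} are exactly the two defining conditions of Definition~\ref{def:triviallyrestricted} (the restricted derivation axiom and the iterated-$\btr$ axiom respectively).

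For item~\ref{item:rstpostliezero}, the $p$-linearity of $(-)^{[p]_{\btr}}$ follows from the $p$-linearity of $(-)^{[p]}$ together with the observation, made just after Equation~\eqref{eqn:pstruct}, that the $\sum_{n\geq 1}$ term is $p$-linear in $x$ (indeed, each summand is a bracket of iterated $\btr$-products whose total $x$-degree equals $p$, and over a field of characteristic $p$ one checks linearity on each homogeneous component). The Jacobson identities with respect to $\llbracket-,-\rrbracket$ are then exactly the content of Theorem~\ref{prop:jacobsonidentitties}(2), whose hypothesis (that $y\btr(-)$ is a restricted derivation) is precisely Equation~\eqref{eqn:restrictedderoriginalpmap}.

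The only remaining item is~\ref{item:rstpostliefive}. Here I would recycle verbatim the central calculation from the proof of Proposition~\ref{prop:fromtriviallyrestrictedtolierestricted}. Expanding the sub-adjacent bracket $\llbracket x^{\star_{\btr}p},y\rrbracket$ in $\mathfrak{u}(\fg)$ gives
\begin{equation*}
(p~{\rm times} )\llbracket x, \llbracket x, \ldots,\llbracket x,y \rrbracket \cdots \rrbracket = [x^{\star_{\btr} p},y] + x^{\star_{\btr} p}\btr y - y \btr x^{\star_{\btr} p}.
\end{equation*}
By the trivially restricted hypothesis, $x^{\star_{\btr}p}-x^{[p]_{\btr}}=x^{p}-x^{[p]}$ is central in $\mathfrak{u}(\fg)$, so $[x^{\star_{\btr}p},y]=[x^{[p]_{\btr}},y]$; and the restricted-derivation axiom yields $y\btr(x^{p}-x^{[p]})=\ad_{x}^{p-1}(y\btr x)-y\btr x^{[p]}=0$, hence $y\btr x^{\star_{\btr}p}=y\btr x^{[p]_{\btr}}$. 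The iterated-$\btr$ axiom~\ref{item:rstpostliefour} of Definition~\ref{def:triviallyrestricted} further gives $x^{\star_{\btr}p}\btr y=x^{[p]_{\btr}}\btr y$. Substituting and rearranging produces exactly item~\ref{item:rstpostliefive}.

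I do not anticipate a serious obstacle: the heart of the work is already done in Theorem~\ref{prop:jacobsonidentitties} and Proposition~\ref{prop:fromtriviallyrestrictedtolierestricted}. The only point requiring a small amount of care is confirming that the $p$-linearity of $(-)^{[p]_{\btr}}$ holds as claimed implicitly throughout the text; this is however a direct homogeneity argument on each summand in~\eqref{eqn:pstruct} using the fact that over characteristic $p$ the binomial coefficients $\binom{p}{k}$ vanish for $1\leq k\leq p-1$.
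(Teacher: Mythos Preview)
Your proposal is correct and follows essentially the same route as the paper. The only cosmetic difference is that for item~\ref{item:rstpostliefive} the paper cites Proposition~\ref{prop:fromtriviallyrestrictedtolierestricted} as a black box (the sub-adjacent Lie algebra is restricted, hence $\llbracket x^{[p]_{\btr}},y\rrbracket=\llbracket x,\ldots,\llbracket x,y\rrbracket\cdots\rrbracket$, then expand), whereas you reproduce the computation from that proposition's proof directly; the content is identical. One small wording caution: $(-)^{[p]}$ is not $p$-linear in the strict sense defined in the paper (it has Jacobson correction terms), so your justification of item~\ref{item:rstpostliezero} should rather appeal to $p$-homogeneity of each summand in~\eqref{eqn:pstruct} together with Theorem~\ref{prop:jacobsonidentitties}(2); the paper itself is informal on this point.
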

\begin{proof}
Let  $(\fg,\btr, [-,-], (-)^{[p]})$ be a trivially restricted post-Lie algebra. In that case, the map $(-)^{[p]_{\btr}}$ is given by Equation \eqref{eqn:pstruct}. Points \ref{item:rstpostlieone}, \ref{item:rstpostlietwo}, \ref{item:rstpostliezero}, \ref{item:rstpostliethree} and \ref{item:rstpostliefour} hold by definition. Thus, from Proposition \ref{prop:fromtriviallyrestrictedtolierestricted}, $(\fg,\llbracket -,- \rrbracket, (-)^{[p]_{\btr}})$ is a restricted Lie algebra, which implies that for any $x,y\in \fg$,
$$
\llbracket x^{[p]_{\btr}}, y\rrbracket = (p~{\rm times} )\,\llbracket x, \llbracket x, \ldots,\llbracket x,y \rrbracket \cdots \rrbracket.
$$
Hence, by inserting the definition of the sub-adjacent Lie bracket, we have
$$
[x^{[p]_{\btr}},y] + x^{[p]_{\btr}}\btr y  =  y \btr x^{[p]_{\btr}} + (p~{\rm times} )\,\llbracket x, \llbracket x, \ldots,\llbracket x,y \rrbracket \cdots \rrbracket,~\forall x,y\in\fg.
$$
\end{proof}

\begin{proposition} 
\label{prop:restrictedadjacent}
Let $(\fg,\btr, [-,-], (-)^{[p]})$ be a restricted post-Lie algebra over $k$. Then the triple $(\fg, \llbracket -,- \rrbracket, (-)^{[p]_{\btr}})$ is a restricted Lie algebra over $k$.
\end{proposition}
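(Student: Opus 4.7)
The plan is to directly verify the three Jacobson identities of Equation \eqref{def:restrictedLie} for the map $(-)^{[p]_{\btr}}$ with respect to the sub-adjacent Lie bracket $\llbracket-,-\rrbracket$, namely: (i) homogeneity $(\lambda x)^{[p]_{\btr}}=\lambda^{p}x^{[p]_{\btr}}$, (ii) the $\ad$-identity $\ad^{\llbracket-,-\rrbracket}_{x^{[p]_{\btr}}}=(\ad^{\llbracket-,-\rrbracket}_{x})^{p}$, and (iii) the additivity $(x+y)^{[p]_{\btr}}=x^{[p]_{\btr}}+y^{[p]_{\btr}}+\sum_{i}s_{i}(x,y)$. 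Each of these will be extracted essentially verbatim from the axioms of Definition \ref{def:restrictedpostlie}.

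Identities (i) and (iii) are immediate from item \ref{item:rstpostliezero} of the definition: $p$-linearity provides (i), and the Jacobson identities formulated with respect to $\llbracket-,-\rrbracket$ provide (iii), where the structure coefficients $s_{i}(x,y)$ are the ones computed from the sub-adjacent bracket via the expansion $(\ad^{\llbracket-,-\rrbracket}_{\lambda x+y})^{p-1}(x)=\sum_{i}is_{i}(x,y)\lambda^{i-1}$.

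The content of the proposition is therefore concentrated in (ii). I would derive it from item \ref{item:rstpostliefive}, which reads
\[
[x^{[p]_{\btr}},y] + x^{[p]_{\btr}}\btr y  =  y \btr x^{[p]_{\btr}} + (p~{\rm times} )\llbracket x, \llbracket x, \ldots,\llbracket x,y \rrbracket \cdots \rrbracket.
\]
Using the definition $\llbracket u,v\rrbracket=[u,v]+u\btr v-v\btr u$ of the sub-adjacent bracket (Equation \eqref{eq:subadjacent-bracket}), the left-hand side rearranges precisely into $\llbracket x^{[p]_{\btr}},y\rrbracket$, while the right-hand side is by definition $(\ad^{\llbracket-,-\rrbracket}_{x})^{p}(y)$. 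This yields (ii) directly, and combining with (i) and (iii) concludes that $(\fg,\llbracket-,-\rrbracket,(-)^{[p]_{\btr}})$ is restricted in the sense of Section \ref{sec:restrictedLie}.

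There is essentially no obstacle: the proof is an unpacking of definitions, and the only substantive observation is that axiom \ref{item:rstpostliefive} of Definition \ref{def:restrictedpostlie} is precisely the $\ad$-identity for $(-)^{[p]_{\btr}}$ rewritten in terms of the data $([-,-],\btr)$ of the ambient post-Lie algebra. This is consistent with the remark following Definition \ref{def:restrictedpostlie}, which asserts that items \ref{item:rstpostliezero} and \ref{item:rstpostliefive} may be jointly replaced by the assertion of the present proposition.
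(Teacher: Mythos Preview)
Your proof is correct and follows essentially the same route as the paper: both verify the three restricted-Lie axioms for $(\fg,\llbracket-,-\rrbracket,(-)^{[p]_{\btr}})$, with the key step being the rewriting of axiom \ref{item:rstpostliefive} as $\llbracket x^{[p]_{\btr}},y\rrbracket = (\ad^{\llbracket-,-\rrbracket}_x)^p(y)$ via the definition of the sub-adjacent bracket. Your sourcing of the Jacobson identities and $p$-linearity directly from axiom \ref{item:rstpostliezero} is in fact cleaner than the paper's appeal to Proposition~\ref{prop:jacobsonidentitties}, which strictly speaking concerns the specific $p$-map of Equation~\eqref{eqn:pstruct} rather than the abstract $(-)^{[p]_{\btr}}$ of Definition~\ref{def:restrictedpostlie}.
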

\begin{proof} The proposition follows from a straightforward check of the axioms listed in Definition \ref{def:restrictedpostlie}. From Proposition \ref{prop:jacobsonidentitties}, the Jacobson identities hold. We have to show that $(p~{\rm times} )\llbracket x, \llbracket x, \ldots,\llbracket x,- \rrbracket \cdots \rrbracket$ is an inner derivation for all $x\in \fg$. Let $y\in \fg$, from \eqref{item:rstpostliefive}, we get 
\begin{align*}
\llbracket x, \llbracket x, \ldots,\llbracket x,y \rrbracket \cdots \rrbracket = [x^{[p]_{\btr}},y] + x^{[p]_{\btr}}\btr y - y \btr x^{[p]_{\btr}} = \llbracket x^{[p]_{\btr}}, y\rrbracket,\quad\forall x,y\in \fg,
\end{align*}
which concludes the proof.
\end{proof}



\begin{example}
    Let $\text{char}(k)=3$. Let $\fg$ be the two dimensional restricted Lie algebra spanned by $e_1,e_2$ with the only nonzero bracket given by $[e_1,e_2]=e_2$ and the 3-map $e_1^{[3]}=e_1,~e_2^{[3]}=0.$ Then, the four following products define a trivially restricted post-Lie algebra structure on $\fg$:
    \begin{align}
        &1.~e_1\btr e_1=0,&e_1\btr e_2=-e_2,&&e_2\btr e_1=0,&& e_2\btr e_2=\lambda e_2;\quad && \lambda\in k, \\
        &2.~e_1\btr e_1=\lambda e_2,&e_1\btr e_2= e_2,&&e_2\btr e_1=0,&&e_2\btr e_2=0;\quad && \lambda\in k, \\
        &3.~e_1\btr e_1=\lambda e_2,&e_1\btr e_2=e_2,&&e_2\btr e_1=0,&&e_2\btr e_2=0;\quad && \lambda\in k, \\
        &4.~e_1\btr e_1=\lambda e_2,&e_1\btr e_2= 0,&&e_2\btr e_1=e_2,&&e_2\btr e_2=0;\quad && \lambda\in k,
    \end{align}
\end{example}

\subsubsection{The case $p=2$.}\label{sec:p=2}

In this section, we work over a field $k$ of characteristic $p=2$.
We specialize the definition \ref{def:triviallyrestricted} of a trivially restricted post-Lie algebra to $p=2$.
\begin{definition}\label{def:restrictedpostlie2}
A trivially restricted post-Lie algebra is a tuple $(\fg,[-,-],(-)^{[2]},\btr)$, where
\begin{enumerate}
    \item $(\fg,[-,-],(-)^{[2]})$ is a restricted Lie algebra;
    \item $(\fg,[-,-],\btr)$ is an ordinary post-Lie algebra;
    \item the following conditions are satisfied, for all $x,y\in \fg$:
    \begin{align}
        x\btr y^{[2]}&=[x\btr y,y];\label{eq:restderivation2}\\
        x^{[2]}\btr y&=x\btr(x\btr y)+(x\btr x)\btr y.\label{eq:associator2}
    \end{align}
\end{enumerate}
\end{definition}
In this case, the sub-adjacent $2$-structure, obtained by specializing \eqref{eqn:pstruct}, reads
\begin{equation}
x^{[2]_{\btr}}=x^{[2]}+ x\btr x, \quad \forall x\in \fg.
\end{equation}
\begin{proposition}[restricted version of \text{\cite[Theorem 3.1]{JZ}}, $p=2$]\label{prop:JZp=2}
      Let $\A$ be a commutative associative algebra. Let $\D\subset\Der(\A)$ be a restricted subalgebra. Then, the space $\A\otimes\D$ is a restricted post-Lie algebra with the bracket and the 2-map defined in Proposition \ref{prop:tensorderivation} and the product
      \begin{equation}
        (a\otimes f)\btr(b\otimes g):=af(b)\otimes g,~\forall a,b\in \A,~\forall f,g\in \D.     
        \end{equation}
      \end{proposition}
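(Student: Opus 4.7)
The plan is to verify, one by one, the three items of Definition \ref{def:restrictedpostlie2}. Item (1), namely that $(\A\otimes\D,[-,-],(-)^{[2]})$ is a restricted Lie algebra, is exactly Proposition \ref{prop:tensorderivation}. Item (2), that $\btr$ defines an ordinary post-Lie product over $(\A\otimes\D,[-,-])$, is essentially the content of \cite[Theorem 3.1]{JZ}; since both post-Lie axioms are polynomial identities with integer coefficients derived solely from the Leibniz rule for elements of $\D$ together with the commutativity of $\A$, they remain valid over $k$ in characteristic two. I would briefly note that commutativity is genuinely used in expanding both $x\btr[y,z] = [x\btr y,z]+[y,x\btr z]$ and $[x,y]\btr z = a_{\btr}(x,y,z) - a_{\btr}(y,x,z)$.

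Only the two conditions \eqref{eq:restderivation2} and \eqref{eq:associator2} of item (3) then require a separate verification. For the restricted derivation condition \eqref{eq:restderivation2}, I would compute
$$
(a\otimes f)\btr (b\otimes g)^{[2]} = (a\otimes f)\btr (b^2\otimes g^2) = a\,f(b^2)\otimes g^2.
$$
Since $f$ is a derivation, $f(b^2)=2bf(b)=0$ in characteristic two, so the left-hand side vanishes. The right-hand side is
$$
[(a\otimes f)\btr(b\otimes g),\,b\otimes g] = [af(b)\otimes g,\,b\otimes g] = af(b)\,b\otimes [g,g] = 0
$$
as well, so the axiom holds trivially.

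For the associator condition \eqref{eq:associator2}, I would directly expand both sides. The left-hand side is $(a^2\otimes f^2)\btr(b\otimes g)=a^2 f^2(b)\otimes g$. Applying Leibniz in the form $f(af(b))=f(a)f(b)+af^2(b)$, the right-hand side expands to
$$
af(af(b))\otimes g + af(a)f(b)\otimes g = af(a)f(b)\otimes g + a^2 f^2(b)\otimes g + af(a)f(b)\otimes g;
$$
the two copies of the cross term $af(a)f(b)\otimes g$ cancel in characteristic two, leaving exactly $a^2 f^2(b)\otimes g$, which matches the left-hand side.

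There is no serious obstacle here; the key observation is that in characteristic two the derivation property forces $f(b^2)=0$, which immediately trivialises \eqref{eq:restderivation2}, while \eqref{eq:associator2} reduces to a direct Leibniz expansion in which the cross terms cancel modulo $2$. The only point that deserves a line or two of commentary is the transfer of the Jiang--Zhang post-Lie identities from characteristic zero to characteristic $p=2$, which is clear once one inspects their proof and sees that no denominators appear.
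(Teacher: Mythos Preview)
Your proposal is correct and follows essentially the same route as the paper: items (1) and (2) are taken as known, and the two $p=2$ conditions are checked by direct computation, with \eqref{eq:associator2} handled exactly as you do. The only cosmetic difference is that for \eqref{eq:restderivation2} the paper simply invokes the earlier observation that $d_{a,f}=(a\otimes f)\btr(-)$ is a restricted derivation (Equation \eqref{eq:JDJderivation}), whereas you verify directly that both sides vanish; your version is in fact more self-contained.
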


      \begin{proof}
        We prove the identities involving the 2-map. Let $a,b\in \A$ and $f,g\in \D$.  Condition \eqref{eq:restderivation2} of Definition \ref{def:restrictedpostlie2} follows from Equation \ref{eq:JDJderivation}. Furthermore, we have
        \begin{align*}
            (a\otimes f)^{[2]}\btr (b\otimes g)=(a^2\otimes f^2)\btr(b\otimes g)=a^2f\circ f(b)\otimes g;
        \end{align*} and
        \begin{align*}
            (a\otimes f)\btr\bigl((a\otimes f)\btr(b\otimes g)\bigl)+\bigl((a\otimes f)&\btr(a\otimes f)\bigl)\btr(b\otimes g)\\
            =~&(a\otimes f)\btr \bigl( af(b)\otimes g \bigl)+\bigl(af(a)\otimes f\bigl)\btr(b\otimes g)\\
            =~&af\bigl(af(b)\bigl)\otimes g+af(a)f(b)\otimes g\\
            =~&af(a)f(b)\otimes g+a^2f\circ f(b)\otimes g+af(a)f(b)\otimes g\\
            =~&a^2f\circ f(b)\otimes g.
        \end{align*} Thus, Condition \eqref{eq:associator2} of Definition \ref{def:restrictedpostlie2} is satisfied as well and $\btr$ defines a trivially restricted post-Lie structure.
 \end{proof}
 
\begin{example} Let $\fg=\langle e_1,e_2,e_3\rangle$, with the only non-trivial bracket $[e_2,e_3]=e_2$ and 2-map $e_3^{[2]}=e_3, e_2^{[2]} = 0, e_1^{[2]} = 0$. Let $\lambda,\mu,\gamma\in k$. The $2$-map can be computed on arbitrary elements by the formula
            $$(\lambda e_1+\mu e_2+\gamma e_3)^{[2]}=\lambda^2e_1^{[2]}+\mu^2e_2^{[2]}+\gamma^2e_3^{[2]}+[\lambda e_1,\mu e_2+\gamma e_3]+[\mu e_2,\gamma e_3]=\gamma^2e_3+\mu\gamma e_2.$$         
Then, there are two families of trivially restricted post-Lie products on $\fg$ given by
            \begin{align*}
            \btr_1:&~e_3\btr_1e_2=\alpha e_2,~e_3\btr_1e_3=\beta e_2,~ \alpha,\beta\in k;\\
             \btr_2:&~e_1\btr_2e_3=\xi e_2,~\xi\in k.
            \end{align*}
        Indeed, the restricted derivations of $\fg$ are given by the restricted 1-cocycles with adjoint coefficients (see \cite{EF}), therefore we have
        \begin{equation}
            \Der_{\rm res}(\fg)=\Span\{e_1\otimes e_1^*,e_2\otimes e_2^*,e_2\otimes e_3^*\}.
        \end{equation}
        Thus, a restricted post-Lie product $\btr$ on $\fg$ is given, for scalars $\lambda_i,\mu_i,\gamma_i\in k$, by
        \begin{align*}
            e_1\btr(-)&=\lambda_1e_1\otimes e_1^*+\mu_1e_2\otimes e_2^*+\gamma_1e_2\otimes e_3^*;\\
            e_2\btr(-)&=\lambda_2e_1\otimes e_1^*+\mu_2e_2\otimes e_2^*+\gamma_2e_2\otimes e_3^*;\\
            e_3\btr(-)&=\lambda_3e_1\otimes e_1^*+\mu_3e_2\otimes e_2^*+\gamma_3e_2\otimes e_3^*.
        \end{align*}
        Checking the remaining conditions on the basis $e_1,e_2,e_3$ leads to the conclusion.
\end{example}

Over a field of characteristic 2, the notions of pre-Lie and post-Lie \textit{super}algebras have been investigated in \cite{BBE}.

 \subsubsection{The case $p=3$.}\label{sec:p=3}
In this section, we work over a field $k$ of characteristic $p=3$ and specialize the definition \ref{def:triviallyrestricted}.
\begin{definition}\label{def:restrictedpostlie3} 
A trivially restricted post-Lie algebra is a tuple $(\fg,[-,-],(-)^{[3]},\btr)$ where
\begin{enumerate}
    \item $(\fg,[-,-],(-)^{[3]})$ is a restricted Lie algebra;
    \item $(\fg,[-,-],\btr)$ is an ordinary post-Lie algebra;
    \item the following conditions are satisfied, for all $x,y\in \fg$:
    \begin{align}
        x\btr y^{[3]}&=\ad_y^2(x\btr y);\label{eq:restderivation3}\\
        x^{[3]}\btr y&=x\btr(x\btr(x\btr y))+2(x\btr x)\btr(x\btr y)+x\btr((x\btr x)\btr y)\label{eq:associator3}\\\nonumber
        &\quad+((x\btr x)\btr x)\btr y+(x\btr(x\btr x) )\btr y.
    \end{align}
\end{enumerate}
\end{definition}
In this case, the sub-adjacent $3$-structure, obtained by specializing \eqref{eqn:pstruct}, reads
\begin{align}
        x^{[3]_{\btr}}&:=x^{[3]}+x\btr(x\btr x)+[x\btr x,x],~\forall x\in \fg.
    \end{align} 

The compatibility condition \eqref{eq:associator3} is equivalent to the fact that $\btr$ yields a restricted representation of $\bigl(\fg,[-,-],(-)^{[3]_{\btr}}\bigl)$. Indeed, from Eq. \eqref{eqn:trrstcaxone}, we have for all $x,y\in\fg,$
$$x^{[3]_{\btr}}\btr y=x\btr (x\btr (x\btr y)), $$ which is equivalent to
\begin{align*}
    x^{[3]}\btr y&=-(x\btr(x\btr x))\btr y-[x\btr x,x]\btr y+x\btr(x\btr(x\btr y))\\
    &=-(x\btr(x\btr x))\btr y+x\btr(x\btr(x\btr y))-(x\btr x)\btr(x\btr y)\\
    &\quad+((x\btr x)\btr x)\btr y+x\btr((x\btr x)\btr y)-(x\btr(x\btr x))\btr y\\
    &=x\btr(x\btr(x\btr y))+2(x\btr x)\btr(x\btr y)+x\btr((x\btr x)\btr y)\\
        &\quad+((x\btr x)\btr x)\btr y+(x\btr(x\btr x) )\btr y.
\end{align*}

\begin{example}
   Let $\text{char}(k)=3.$ Let $\fg$ be the restricted Heisenberg algebra spanned by elements $\{e_1,e_2,e_3\}$ with the only nonzero bracket $[e_1,e_2]=e_3$ and $3$-map $e_1^{[3]}=e_3$ (see \cite{EM}). Let $\mu,\gamma,\theta\in k$. Then, the product $\btr$ defined by the nonzero relations
    \begin{equation}
        e_1\btr e_1=\mu e_3,~e_1\btr e_2=\gamma e_3,~\text{and }e_2\btr e_1=\theta e_3
    \end{equation} yields a trivially restricted post-Lie algebra in the sense of Definition \ref{def:restrictedpostlie}.     
\end{example}

\begin{example} (adapted from \cite{PLBG}) Let $k$ be a field of characteristic 3 containing $\sqrt{2}$. We consider the Lie algebra $\fg=\mathfrak{sl}_2(k)$ spanned by elements $e_1,e_2,e_3$ with brackets $[e_2,e_3]=e_1,~[e_3,e_1]=e_2,~[e_1,e_2]=e_3$ and the 3-map $e_i^{[3]}=-e_i,~\forall i=1,2,3.$ Denote by $\alpha=1+\sqrt{2}$ and by $\beta=2\sqrt{2}+1$. Then, the product $\btr$ defined as follows endows $\fg$ with a trivially restricted post-Lie structure:
\begin{align*}
    e_1\btr e_i&=-[e_1,e_i];\\
    e_2\btr e_i&=-[\alpha e_2+\beta e_3,e_i];\\
    e_3\btr e_i&=-[\alpha e_2+\beta e_3,e_i].
\end{align*}

\end{example}

\begin{proposition}[restricted version of \text{\cite[Theorem 3.1]{JZ}}, $p=3$]\label{prop:JZp=3}
      Let $\A$ be a commutative associative algebra. Let $\D\subset\Der(\A)$ be a restricted subalgebra. Then, the space $\A\otimes\D$ is a trivially restricted post-Lie algebra with the bracket and the 3-map defined in Proposition \ref{prop:tensorderivation} and the post-Lie product
      \begin{equation}
        (a\otimes f)\btr(b\otimes g):=af(b)\otimes g,~\forall a,b\in \A,~\forall f,g\in \D.     
        \end{equation}
      \end{proposition}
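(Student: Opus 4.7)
The proposition packages two statements: the (classical) Lie part is recorded in Proposition \ref{prop:tensorderivation} (Example \ref{examplesun}(iv)), which gives that $(\mathcal{A}\otimes\mathcal{D}, [-,-], (-)^{[3]})$ is a restricted Lie algebra, and the ordinary post-Lie part is the Ju--Zhang Theorem 3.1 of \cite{JZ}, whose axioms are polynomial identities in $\btr$, $[-,-]$ and the multiplication of $\mathcal{A}$, involving no denominators, so they remain valid over a field of characteristic $3$. Hence items (1) and (2) of Definition \ref{def:restrictedpostlie3} are for free. It remains to check the two compatibility conditions \eqref{eq:restderivation3} and \eqref{eq:associator3} for arbitrary $x = a\otimes f$ and $y = b\otimes g$.

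\textbf{Checking \eqref{eq:restderivation3}.} Both sides vanish. On the one hand, since $\mathcal{A}$ is commutative,
\[
x \btr y^{[3]} = (a\otimes f) \btr (b^3 \otimes g^3) = af(b^3) \otimes g^3 = 3ab^2 f(b) \otimes g^3 = 0.
\]
On the other, $[y, x\btr y] = [b\otimes g, af(b)\otimes g] = abf(b) \otimes [g,g] = 0$, so $\ad_y^2(x\btr y)=0$.

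\textbf{Checking \eqref{eq:associator3}.} The left-hand side reads
\[
x^{[3]} \btr y = (a^3 \otimes f^3) \btr (b \otimes g) = a^3 f^3(b) \otimes g.
\]
For the right-hand side, each of the five summands is computed by iterating the definition $(a\otimes f)\btr(b\otimes g) = af(b)\otimes g$ and expanding with the Leibniz rule for $f$. Every resulting element is a sum of tensors of the form $P \otimes g$ where $P$ is a product involving $a,f(a),f^2(a)$ and $b,f(b),f^2(b),f^3(b)$. After simplification modulo $3$, only four monomial types can appear, namely
\[
af(a)^2 f(b),\quad a^2 f^2(a) f(b),\quad a^2 f(a) f^2(b),\quad a^3 f^3(b),
\]
each tensored with $g$. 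Collecting terms, the cubic iterate $x\btr(x\btr(x\btr y))$ contributes the four coefficients $(1,1,0,1)$ (after the internal $3af(a)f^2(b)$ cancels), $2(x\btr x)\btr(x\btr y)$ contributes $(2,0,2,0)$, $x\btr((x\btr x)\btr y)$ contributes $(1,1,1,0)$, $((x\btr x)\btr x)\btr y$ contributes $(1,0,0,0)$, and $(x\btr(x\btr x))\btr y$ contributes $(1,1,0,0)$. The totals are $(6,3,3,1) \equiv (0,0,0,1) \pmod 3$, so only $a^3 f^3(b)\otimes g$ survives, matching the left-hand side.

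\textbf{Main obstacle.} The verification of \eqref{eq:restderivation3} is immediate, and the structural axioms are supplied by earlier results. The only real work is the bookkeeping of the five Leibniz expansions in \eqref{eq:associator3}: each application of $f$ to a product of two factors produces two terms and each application to three factors produces three, so the five summands spread into roughly $2+4+3+1+2$ monomials that must be regrouped by monomial type. The cleanest way is to organize the calculation as a $5\times 4$ table of coefficients and read off that the first three columns sum to a multiple of $3$, which is exactly what characteristic $3$ makes transparent.
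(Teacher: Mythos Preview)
Your proof is correct and follows essentially the same route as the paper: both verify \eqref{eq:restderivation3} on simple tensors (you by showing both sides vanish, the paper by invoking the restricted-derivation fact from \eqref{eq:JDJderivation}) and then expand the five summands of \eqref{eq:associator3} via Leibniz and check that everything but $a^3f^3(b)\otimes g$ cancels modulo~$3$. Your $5\times 4$ coefficient table is a tidier way to organize the same bookkeeping the paper writes out term by term.
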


      \begin{proof}
Condition \eqref{eq:restderivation3} of Definition \ref{def:restrictedpostlie3} follows from Equation \eqref{eq:JDJderivation}. We prove Condition \eqref{eq:associator3}. Let $a,b\in \A $ and $ f,g\in \D.$ We compute separately the five terms of the right-hand side of Condition \eqref{eq:associator3}.

        \begin{align*}
            (a\otimes f)\btr\bigl((a\otimes f)\btr((a\otimes f)\btr(b\otimes g)\bigl)&=af\bigl(af(a)f(b)+a^2f^2(b)\bigl)\otimes g\\
                    &=af(a)^2f(b)\otimes g+af^2(a)f(b)\otimes g+\underline{a^3f^3(b)\otimes g};\\
                    &~\\
            2\bigl((a\otimes f)\btr(a\otimes f)\bigl)\btr\bigl((a\otimes f)\btr(b\otimes g)\bigl)&=  2\bigl(af(a)f(af(b))\otimes g  \bigl)\\
            &=2af(a)^2f(b)\otimes g+2a^2f(a)f^2(b)\otimes g;\\
                    &~\\
            (a\otimes f)\btr\bigl(((a\otimes f)\btr (a\otimes f))\btr (b\otimes g) \bigl)&= af\bigl(af(a)f(b) \bigl)\otimes g\\
            &=af(a)^2f(b)\otimes g+a^2f^2(a)f(b)\otimes g+a^2f(a)f^2(b)\otimes g;\\
                    \end{align*}
                    \begin{align*}
            \bigl(  ((a\otimes f)\btr(a\otimes f))\btr(a\otimes f)  \bigl)\btr(b\otimes g)&=\bigl(af(a)^2\otimes f   \bigl)\btr(b\otimes g)\\
            &=af(a)^2f(b)\otimes g;\\
                    &~\\
            \bigl( (a\otimes f)\btr ((a\otimes f)\btr(a\otimes f) )   \bigl)\btr(b\otimes g)&=af(a f(a))f(b)\otimes g\\
            &= af(a)^2f(b)\otimes g+a^2f^2(a)f(b)\otimes g.
        \end{align*}
        Summing those five terms, everything vanishes except the underlined term. Or, computing the left-hand side of Condition
        \eqref{eq:associator3} gives
        \begin{equation}
            (a\otimes f)^{[3]}\btr((b\otimes g)=(a^3\otimes f^3)\btr(b\otimes g)=a^3f^3(b)\otimes g,
        \end{equation} which finishes the proof.
      \end{proof}
      \subsection{Quasi-shuffle algebras.} A \emph{quasi-shuffle algebra} (see \cite{H}) is a tuple $(A, \prec, \succ, \cdot)$, where $A$ is a vector space, $\cdot$ is an associative product on $A$, the binary operations $\prec$ and $\succ$ satisfy for all $a,b,c\in A$ the relations
      \begin{align}
      &(a \prec b ) \prec c = a \prec (b \star c), &&  a \succ ( b  \succ c ) = (a \star b) \succ c && (a \succ b) \prec c = a \succ (b \prec c) \\
      &(a\cdot b) \prec c = a \cdot (b \prec c)   && (a\succ b) \cdot c = a \succ (b \cdot c) && a\cdot (b \succ c) = (a  \prec b ) \cdot c
      \end{align}
 with the associative product $a\star b := a\prec b+ a\succ b+ a\cdot b,\,\forall a,b\in A$. The above relations implies that $ a \btr b : = b \prec a - a \succ b$ is a post-Lie operation with respect to the Lie algebra $(A, [-,-]_{\cdot})$. Note that quasi-shuffle algebras also appear in the literature under the name dendriform trialgebras (or tridendriform algebras), see e.g. \cite{LR}.

 \begin{proposition} Let $(A, \prec, \succ, \cdot)$ be a quasi-shuffle algebra. Then, 
 $
 (A, \btr, (-)^{\star p},[-,-]_{\cdot}, (-)^{\cdot p})
 $
 is a restricted post-Lie algebra (see Definition \ref{def:restrictedpostlie}).
  \end{proposition}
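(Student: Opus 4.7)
The plan is to verify the six axioms of Definition~\ref{def:restrictedpostlie} in turn. Axiom~\ref{item:rstpostlieone} (the post-Lie relation) is exactly the statement recalled just before the proposition, and axiom~\ref{item:rstpostlietwo} (that $(A,[-,-]_\cdot, (-)^{\cdot p})$ is a restricted Lie algebra) is Jacobson's theorem applied to the associative algebra $(A,\cdot)$.

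For axiom~\ref{item:rstpostliethree}, the fact that $y\mapsto x\btr y$ is a derivation of $(A,[-,-]_\cdot)$ is part of the post-Lie axioms, so only the restricted condition $x\btr y^{\cdot p}=\ad^{p-1}_y(x\btr y)$ requires work. Using the quasi-shuffle identities $(a\cdot b)\prec c = a\cdot(b\prec c)$ and $(a\succ b)\cdot c = a\succ(b\cdot c)$, the left-hand side simplifies to $y^{\cdot(p-1)}\cdot(y\prec x)-(x\succ y)\cdot y^{\cdot(p-1)}$. Expanding $(\ad_y)^{p-1}$ in characteristic $p$ as $\sum_{k=0}^{p-1}(L^{\cdot}_y)^k(R^{\cdot}_y)^{p-1-k}$ (valid since $\binom{p-1}{k}\equiv (-1)^k$ modulo $p$ and $(-1)^{p-1}\equiv 1$) and then applying $a\cdot(b\succ c)=(a\prec b)\cdot c$ to each summand produces a telescoping sum collapsing to the same expression.

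For axiom~\ref{item:rstpostliefour}, I would establish by induction on $n$ the closed-form identity
\begin{equation*}
\underbrace{x\btr (x\btr(\cdots (x\btr y)\cdots))}_{n \text{ terms}}=\sum_{i=0}^n(-1)^i\binom{n}{i}\,x^{\star i}\succ y\prec x^{\star(n-i)},
\end{equation*}
whose induction step uses the recursion $x\btr(x^{\star i}\succ y\prec x^{\star j})=x^{\star i}\succ y\prec x^{\star (j+1)}-x^{\star(i+1)}\succ y\prec x^{\star j}$, a direct consequence of $(a\prec b)\prec c = a\prec(b\star c)$, $a\succ(b\succ c)=(a\star b)\succ c$ and $(a\succ b)\prec c = a\succ(b\prec c)$. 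Specializing to $n=p$, the vanishing of $\binom{p}{i}$ modulo $p$ for $0<i<p$ leaves only the two boundary terms which, after a parity check done separately for $p=2$ and $p$ odd (both cases yielding the same sign), assemble into $y\prec x^{\star p}-x^{\star p}\succ y = x^{\star p}\btr y$.

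Axioms~\ref{item:rstpostliezero} and~\ref{item:rstpostliefive} together amount to showing that $(A,\llbracket-,-\rrbracket, (-)^{\star p})$ is a restricted Lie algebra; this is handled by relating the sub-adjacent bracket to the commutator of the associative product $\star$ and invoking Jacobson's theorem for the associative algebra $(A,\star)$. The main obstacle is the combinatorial identity behind axiom~\ref{item:rstpostliefour}: finding the right closed-form ansatz for $(x\btr)^n y$ is delicate because six tridendriform relations must be orchestrated in parallel, but once that ansatz is in hand the characteristic-$p$ collapse is transparent and the remaining verifications reduce to classical Jacobson theory together with routine manipulations.
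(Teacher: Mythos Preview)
Your treatment of axiom~\ref{item:rstpostliefour} is essentially the paper's argument, just written out more explicitly: your closed form $\sum_{i=0}^n(-1)^i\binom{n}{i}\,x^{\star i}\succ y\prec x^{\star(n-i)}$ is precisely $(L_x-R_x)^n(y)$ with $L_x(y)=y\prec x$ and $R_x(y)=x\succ y$. The paper simply observes that $L_x$ and $R_x$ commute (from $(a\succ b)\prec c=a\succ(b\prec c)$) and applies the freshman's dream $(L_x-R_x)^p=L_x^p-R_x^p$, together with $L_x^p=L_{x^{\star p}}$ and $R_x^p=R_{x^{\star p}}$. Your inductive derivation is a fine alternative, and your telescoping argument for axiom~\ref{item:rstpostliethree} actually goes further than the paper, which only records the Leibniz rule $z\btr(x\cdot y)=x\cdot(z\btr y)+(z\btr x)\cdot y$ and does not verify the restricted condition $x\btr y^{\cdot p}=\ad_y^{p-1}(x\btr y)$.

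There is, however, a genuine gap in your handling of axioms~\ref{item:rstpostliezero} and~\ref{item:rstpostliefive}. A direct computation with $a\btr b=b\prec a-a\succ b$ gives
\[
\llbracket x,y\rrbracket=[x,y]_\cdot+(y\prec x-x\succ y)-(x\prec y-y\succ x)=-[x,y]_\star+2[x,y]_\cdot,
\]
so the sub-adjacent bracket is \emph{not} the $\star$-commutator unless $p=2$ or $\cdot$ is commutative. Consequently, Jacobson's theorem for $(A,\star)$ does not by itself show that $(A,\llbracket-,-\rrbracket,(-)^{\star p})$ is restricted: the Jacobson correction terms $s_i^{\star}(x,y)$ are iterated $\star$-commutators, not iterated $\llbracket-,-\rrbracket$-brackets, and the $\ad$-condition $(\ad^{\llbracket-,-\rrbracket}_x)^p=\ad^{\llbracket-,-\rrbracket}_{x^{\star p}}$ is likewise not immediate. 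The paper's own proof is silent on these two axioms as well, so this is a point where additional argument is genuinely needed rather than a divergence from an existing proof.
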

   \begin{proof} We check points 3., 4. and 5. from the Definition \ref{def:restrictedpostlie}. We check point 4.
   For any $a \in A$, set $R_a\colon A \to A$, $R_a(x) = a \succ x$ and $L_a\colon A \to A$, $L_a(x) = x \prec a$.
   Then 
   $$
   L_a R_b (x) = a\succ (x \prec b) = (a\succ x) \prec b = R_b \circ L_a (x), \quad \forall x \in A.
   $$
   Hence, for any $x,y \in A$,
   \begin{align*}
  (p ~{\rm times}) \quad  x \btr( x \btr( \cdots (x \btr y) \cdots ) & = (L_x - R_x)^{p}(y)  \\
  & = \sum_{k=0}^{p}\binom{p}{k}L_x^k (- R_x)^{p-k}\\  & = L_x^p - R_x^p (y) = L_{x^{\star p}} - R_{x^{\star p}}(y) = x^{\star p} \btr y.
  \end{align*}
We prove 3. For any $x,y\in A$, we have
\begin{align}
z \btr (x\cdot y) &= (x\cdot y) \prec z - z \succ (x\cdot y) \\ &= x \cdot (y \prec z) - (z \succ x) \cdot y \\
&= x \cdot (y \prec z) - x \cdot (z \succ y) + (x \prec z)\cdot y - (z \succ x) \cdot y \\
& = x \cdot (x \btr y) + (z \btr x)\cdot y.
\end{align}
\end{proof}

\noindent\textbf{Aknowledgements.} We would like to thank S. Benayadi, S. Bouarroudj and A. Dzhumadil'daev for their support, their interest in this project and for fruitful discussions, as well as an anonymous referee for helpful and encouraging comments.


\begin{thebibliography}{999}


 \bibitem[BBE]{BBE} Benayadi S., Bouarroudj S., and Ehret Q., Left-symmetric superalgebras and Lagrangian extensions of Lie superalgebras in characteristic 2, J. Pure Appl. Algebra \textbf{229} (2025), no. 11, Paper No. 108086, 43 pp.\\

  \bibitem[BK]{BK} Bruned, Y., and Katsetsiadis, F.,
Post-Lie algebras in regularity structures,
Forum Math. Sigma \textbf{11} (2023), Paper No. e98, 20 pp.

\bibitem[Bu]{Bu}  Burde, D., Left-symmetric algebras, or pre-Lie algebras in geometry and physics. Cent. Eur. J.
Math. \textbf{4} (2006) 323–357.

\bibitem[C]{C}
Cayley, A., On the theory of analytic forms called trees. Philos. Mag, 13, 19--30.


\bibitem[CK]{CK} Connes, A., and Kreimer, D.,
Hopf algebras, renormalization and noncommutative geometry, Quantum field theory: perspective and prospective (Les Houches, 1998), 59–108.
NATO Sci. Ser. C Math. Phys. Sci., 530
Kluwer Academic Publishers, Dordrecht, 1999
ISBN:0-7923-5672-1

\bibitem[Dz]{asqar} Dzhumadil'daev, A.,
Jacobson formula for right-symmetric algebras in characteristic p,
Special issue dedicated to Alexei Ivanovich Kostrikin,
Comm. Algebra \textbf{29} (2001), no. 9, 3759--3771.

\bibitem[Do]{Do} Dokas, I.
Pre-Lie algebras in positive characteristic,
J. Lie Theory \textbf{23} (2013), no. 4, 937--952.



\bibitem[EF]{EF} Evans, T. J., and Fuchs, D. B., A Complex for the Cohomology of Restricted Lie Algebras, J. Fixed Point Theory Appl. {\bf 3} (2008), 159--179.

\bibitem[EFLM]{EFLM}
Ebrahimi-Fard, K., Lundervold, A., and Munthe-Kaas, H. Z., On the Lie Enveloping Algebra of a Post-Lie Algebra. Journal of Lie Theory \textbf{25} (4) (2015), 1139-1165.


\bibitem[EM]{EM}
	Ehret, Q., and Makhlouf, A.,
	Cohomology and deformations of restricted Lie algebras and their morphisms in  positive characteristic, Adv. Pure Appl. Math. \textbf{16} (2025), no. 3, 26–73.

\bibitem[F]{F}
	Foissy, L.,
	Construction of pre-and post-Lie algebras for stochastic PDEs, \texttt{arXiv:2506.03767}


\bibitem[Ge]{Ge} Gerstenhaber, M., The cohomology structure of an associative ring, Ann. of Math. \textbf{78} (1963),
267--288.

\bibitem[G]{G}
Gilliers, N.,
Post-Hopf algebras and non-commutative probability theory, J. Algebra \textbf{671} (2025), 1--60.

\bibitem[H]{H} Hoffman, M. E.,
Quasi-shuffle products,
J. Algebraic Combin. \textbf{11} (2000), no. 1, 49–68.

\bibitem[J]{J}
Jacobson, N.,
Restricted Lie algebras of characteristic p, Trans. Amer. Math. Soc. \textbf{50}, (1941), 15--25.



\bibitem[JZ]{JZ}
Jacques, J-D., and Zambotti, L.,
Post-Lie algebras of derivations and regularity structures, preprint, 2023, \texttt{arXiv:2306.02484v2}. 



\bibitem[K]{K}Kupershmidt, B. A.,
What a classical $r$-matrix really is,
J. Nonlinear Math. Phys. \textbf{6} (1999), no. 4, 448–488.

\bibitem[LR]{LR} Loday, J.-L., and Ronco, M.,
Trialgebras and families of polytopes, in: \textit{Homotopy theory: relations with algebraic geometry, group cohomology, and algebraic K-theory}, 369–398.
Contemp. Math. \textbf{346}
American Mathematical Society, Providence, RI, 2004
ISBN:0-8218-3285-9

\bibitem[M]{M}
Manchon, D.,
A short survey on pre-Lie algebras. \textit{Noncommutative geometry and physics: renormalisation, motives, index theory}, 89–102.
ESI Lect. Math. Phys.
European Mathematical Society (EMS), Zürich, 2011
ISBN:978-3-03719-008-1

\bibitem[MKL]{MKL} Munthe-Kaas, H., and Lundervold, A.,
On post-Lie algebras, Lie-Butcher series and moving frames.
Found. Comput. Math. \textbf{13} (2013), no. 4, 583--613.

\bibitem[MKW]{MKW} Munthe-Kaas H., and Wright W., On the Hopf Algebraic Structure of Lie
Group Integrators, Found. Comput. Math. \textbf{8} no. 2 (2008), 227--257.

\bibitem[OG]{OG}
Oudom, J.-M., and Guin, D.,
On the Lie enveloping algebra of a pre-Lie algebra,
J. K-Theory \textbf{2} (2008), no. 1, 147--167.


\bibitem[PLBG]{PLBG} Pan, Y., Liu, Q., Bai, C., and Guo, L.,
PostLie algebra structures on the Lie algebra ${\rm SL}(2,\Bbb C)$,
Electron. J. Linear Algebra \textbf{23} (2012), 180--197.


\bibitem[R]{Re}
Reutenauer, C.,
Free Lie algebras. Handbook of algebra, Vol. 3, 887–903.
Handb. Algebr., 3
Elsevier/North-Holland, Amsterdam, 2003.
ISBN:0-444-51264-0.

\bibitem[So]{So} Solomon, L.,
On the Poincaré-Birkhoff-Witt theorem.
J. Combinatorial Theory \textbf{4} (1968), 363--375.


\bibitem[SF]{SF}
 Strade H., and Farnsteiner R.,
 \emph{Modular Lie Algebras and their Representations}, Monogr. Textbooks Pure Appl. Math., 116
Marcel Dekker, Inc., New York, 1988. x+301 pp.
\textbf{ISBN:} 0-8247-7594-5.

\bibitem[Va]{Va} Vallette, B.,
Homology of generalized partition posets.
J. Pure Appl. Algebra   \textbf{208} (2007), no. 2, 699--725.

\end{thebibliography}
\end{document}